
\documentclass[11pt,a4wide]{amsart}
\usepackage{a4wide}
\usepackage{pst-all}
\usepackage{pstcol}
\usepackage{graphics,graphicx}
\usepackage[english]{babel}


\newtheorem{thm}{Theorem}[section]
\newtheorem{cor}[thm]{Corollary}
\newtheorem{lem}[thm]{Lemma}
\newtheorem{prop}[thm]{Proposition}
\newtheorem{defn}[thm]{Definition}

\newtheorem{rem}[thm]{Remark}
\numberwithin{equation}{section}




\begin{document}

\title[Control and stabilization of the linearized Benjamin equation]{On the controllability and stabilization of the linearized Benjamin equation on a periodic domain}

\author{ Mahendra  Panthee  $\&$ Francisco J. Vielma Leal}

\address{Department of Mathematics,  University of Campinas, S\~ao Paulo, SP, Brasil}

\email{ mpanthee@ime.unicamp.br, \;fvielmaleal7@gmail.com}

\thanks{This work was partially supported by  FAPESP, Brazil with grants 2016/25864-6 and 2015/06131-5.}

\thanks{}

\subjclass{ 93B05, 93D15, 35Q53}

\keywords{Dispersive equations, Benjamin equation, Well-posedness, Controllability, Stabilization}


\dedicatory{}

\commby{Mahendra Panthee}


\begin{abstract}
 In this work we study the controllability and stabilization of the linearized Benjamin equation which models the unidirectional propagation of long waves in a two-fluid system where the lower fluid with greater density is infinitely deep and the interface is subject to capillarity. We show that the linearized Benjamin equation with periodic boundary conditions is exactly controllable and  exponentially stabilizable with any given decay rate in $H_{p}^{s}(\mathbb{T})$ with $s\geq0$.
\end{abstract}

\maketitle

\section{Introduction}

We consider the Benjamin equation,
\begin{equation}\label{BO}
    \partial_{t}u-\alpha \mathcal{H}\partial^{2}_{x}u-\partial^{3}_{x}u+\partial_{x}u^{2}=0, \;\;\;\;x\in \mathbb{R},\;\;t\in \mathbb{R},
\end{equation}
where $u=u(x,t)$ denotes a real-valued function of two real variables $x$ and $t,$
$\alpha$ is a positive real number, and $\mathcal{H}$ denotes the Hilbert transform defined by
\begin{equation}\label{hilberttramsf}
 \mathcal{H}(f)(x)=\frac{1}{\pi}p.v.\int \frac{f(x-y)}{y}dy.
\end{equation}

The Benjamin equation \eqref{BO} is an integro-differential equation that serves as a generic model for
unidirectional propagations of long waves in a two-fluid system where the lower fluid with
greater density is infinitely deep and the interface is subject to capillarity. It was derived by Benjamin
\cite{5} to study gravity-capillarity surface waves of solitary type in deep water.
He also showed that solutions of the equation \eqref{BO} satisfy the conserved quantities,
$$I_{1}(u)=\frac{1}{2}\int_{\mathbb{R}} u^{2}(x,t)\;dx,$$
and
$$\displaystyle{I_{2}(u)=\int_{\mathbb{R}} \left[\frac{1}{2}(\partial_{x}u)^{2}(x,t)-
\frac{\alpha}{2}u(x,t)\mathcal{H}\partial_{x}u(x,t)-\frac{1}{3}u^{3}(x,t)\right]\;dx}.$$

 Several works have been devoted to study the existence, stability and asymptotic properties of  solitary
waves solutions of \eqref{BO}, see for instance ~\cite{15,16,5,18}. The well-posedness
of the initial value  problem (IVP) associated to the Benjamin equation on $H^{s}(\mathbb{R})$  has been intensively studied for many years, see ~\cite{19,20,22,23}.
The best known global well-posedness result in $L^{2}(\mathbb{R})$ is due to Linares \cite{23}. There are further  improvements of this result, viz., local well-posedness in $H^{s}(\mathbb{R})$ for $s\geq -\frac{3}{4}$ \cite{20}.

The Benjamin equation posed on a periodic spatial domain $\mathbb{T}:=\mathbb{R}/(2\pi \mathbb{Z})$ is also widely studied in the literature.  Linares \cite{23} proved global well-posedness in $L^{2}(\mathbb{T})$, and   Shi and Junfeng \cite{22}  proved local well-posedness in $H^{s}(\mathbb{T})$ for $s\geq -\frac{1}{2}$. 

In this work, we interested in considering the linearized Benjamin equation posed on a periodic domain,
\begin{equation}\label{2D-BO}
  \partial_{t}u-\alpha \mathcal{H}\partial^{2}_{x}u-\partial^{3}_{x}u
=0, \;\;\;\;x\in\mathbb{T},\;\;t\in\mathbb{R},
\end{equation}
and study controllability and stabilization. More precisely, we  are interested in the following two problems.

\noindent
\textbf{\emph{Exact control problem:}}
Given an initial state $u_{0}$ and a terminal state $u_{1}$
in a certain space with $[u_{0}]=[u_{1}],$ can one find an appropriate control input
$f$ so that the equation 
\begin{equation}\label{2D-BO1}
  \partial_{t}u-\alpha \mathcal{H}\partial^{2}_{x}u-\partial^{3}_{x}u
=f(x,t), \;\;\;\;x\in \mathbb{T},\;\;t\in \mathbb{R},
\end{equation}
admits a solution $u$ such that $u(x,0)=u_{0}(x)$ and $u(x,T)=u_{1}(x) \;\;\text{for all}\;\; x\in \mathbb{T}$ and any final time $T>0$?

\noindent
\textbf{\emph{Stabilization Problem:}}
Given $u_{0}$ in a certain space.
Can one find a feedback control law: $f=Ku$ so that the resulting closed-loop system
\begin{equation}\label{2D-BO2}
  \partial_{t}u-\alpha \mathcal{H}\partial^{2}_{x}u-\partial^{3}_{x}u
=Ku, \;\;u(x,0)=u_{0},\;\;x\in \mathbb{T},\;\;t\in \mathbb{R}^{+}
\end{equation}
is asymptotically stable  as $t\rightarrow \infty$?

Control and stabilization of the dispersive equations has been widely studied in the literature. In particular, for the Korteweg-de Vries (KdV) equation, the study of control and stabilization problems can be found in \cite{14,10, Zhang 1, Russell and Zhang, Rosier 1, Coron Crepau, Menzala Vasconcellos Zuazua, Rosier and Zhang 2}. Also, the Benjamin-Ono (BO) equation  has called the attention in the last decade (see \cite{1,Laurent Linares and Rosier, Linares Rosier} and the references therein). The Benjamin equation displays both a third order local term $\partial_{x}^{3}u$ as in the KdV equation, and a second order nonlocal term $\alpha \mathcal{H} \partial_{x}^{2}u$ as in the BO equation. So, it is natural to analyse the Benjamin equation from the control and stabilization point of view and check whether it  behaves in the similar way as the KdV and the BO equations.

Inspired by the recent works of Linares and  Ortega  \cite{1},   Russell and Zhang  \cite{10}, and Laurent, Rosier, and Zhang \cite{14} who respectively  studied the controllability and stabilization of the linearized BO equation and  the KdV equation on a periodic domain, we have obtained similar results for the linearized Benjamin equation as well. Different nature of eigenvalues  for the associated operator creates an obstacle in our case which we overcome using a generalized Ingham's inequality (see Remarks \ref{increasing} and \ref{otempo1} below).

Initially, we consider the initial value problem (IVP) associated to  equation \eqref{2D-BO}
in the periodic setting,
\begin{equation}\label{introduc}
\left \{
\begin{array}{l l}
    \partial_{t}u-\alpha\mathcal{ H}\partial^{2}_{x}u-\partial^{3}_{x}u=0,&  t\in\mathbb{R},\;\;x\in \mathbb{T}\\
    u(x,0)=u_{0}(x), & x\in \mathbb{T},
\end{array}
\right.
\end{equation}
with initial data $u_{0}(x)$ in an adequate space. As in the real setting,  with appropriate boundary conditions the equation \eqref{introduc}
admits the following conserved quantity
$$\int_{0}^{2\pi}u(x,t)\;dx=\int_{0}^{2\pi}u_{0}(x)\;dx.$$

The  IVP associated to  equation \eqref{2D-BO1}
in the periodic settings, can be written as
\begin{equation}\label{introduc2}
\left \{
\begin{array}{l l}
   \partial_{t}u-\alpha \mathcal{H}\partial^{2}_{x}u-\partial^{3}_{x}u=f(x,t),&  t\in (0,T),\;\;x\in \mathbb{T}\\
    u(x,0)=u_{0}(x), & x\in \mathbb{T},
\end{array}
\right.
\end{equation}
with initial data $u_{0}$ in an adequate space. The solution $u$ of system \eqref{introduc2}  satisfies
$$\frac{d}{dt}\left[\int_{0}^{2\pi}u(x,t)\;dx\right] =\int_{0}^{2\pi}f(x,t)\;dx.$$

So, the mass in the control system \eqref{introduc2} is indeed conserved  if we demand the function $f$ to satisfy
\begin{equation}\label{mediaf}
\int_{0}^{2\pi}f(x,t)\;dx= 0.
\end{equation}

In this work, the control function $f$ in \eqref{2D-BO1}   is allowed to act on only a small subset of the domain $\mathbb{T}$, i.e., $f$  is considered to be supported in a given open set $\omega\subset \mathbb{T}$. This situation includes more cases of practical interest and is therefore more relevant in general. With these considerations, we consider $g(x)$ as a real non-negative smooth function defined on $\mathbb{T}$
such that
\begin{equation}\label{gcondition}
2\pi[g]:=\int_{0}^{2\pi}g(x)\;dx=1,
\end{equation}
where $[g]$ represents the mean value of the function g over the interval $(0,2\pi).$ We assume
$\text{supp} \;g=\omega \subset \mathbb{T},$ where
$\omega=\{x\in \mathbb{T}: g(x)>0 \}$ is an open interval.
We will restrict our attention to control functions  of the form
\begin{equation}\label{EQ1}
f(x,t)=G(h)(x,t):=g(x)\left[h(x,t)-\int_{0}^{2\pi}g(y)
h(y,t)\;dy\right],\;\forall x\in \mathbb{T},\;t\in [0,T],
\end{equation}
where $h$ is a function defined in $\mathbb{T}\times [0,T].$
 Thus, $h\equiv h(x,t)$ can be considered as a new control function. Moreover,
for each $t\in [0,T] $ we have
that \eqref{mediaf} is satisfied.

Now we state the main results of this work which provide affirmative answers to the both questions posed above. The first main result deals with the controllability and reads as follows.
\begin{thm}\label{ControlLa}
	Let $s\geq 0,$ $\alpha>0,$ and  $T>0$ be given. 
	Then for each $u_{0},\; u_{1}\in H_{p}^{s}(\mathbb{T})$ with $[u_{0}]=[u_{1}],$ there exists a function $h\in L^{2}([0,T];H_{p}^{s}(\mathbb{T}))$
	such that the unique solution $u\in C([0,T];H^{s}_{p}(\mathbb{T}))$ of the non homogeneous system \eqref{introduc2} with $f(x,t)=G(h)(x,t)$
	satisfies $u(x,T)=u_{1}(x), \;x\in\mathbb{T}.$ Moreover, there exists a positive constant $\nu \equiv \nu(s,g,T)> 0$
	such that
$$\|h\|_{L^{2}([0,T];H_{p}^{s}(0,2\pi))} \leq \nu\; (\|u_{0}\|_{H_{p}^{s}(0,2\pi)}
	+\|u_{1}\|_{H_{p}^{s}(0,2\pi)}).$$
\end{thm}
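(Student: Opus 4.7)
The plan is to reduce Theorem~\ref{ControlLa} to an observability inequality by the Hilbert Uniqueness Method. Since the control $f=G(h)$ defined in \eqref{EQ1} preserves the mean of the solution, I may restrict to the zero-mean subspace and assume $[u_0]=[u_1]=0$. The operator $A=\alpha\mathcal{H}\partial_x^2+\partial_x^3$ is skew-adjoint on the mean-zero part of $L^2_p(\mathbb{T})$ (as $\mathcal{H}$ and $\partial_x^3$ are skew-adjoint, $\partial_x^2$ is self-adjoint, and $\mathcal{H}$ commutes with $\partial_x$), so it generates a unitary group $S(t)$. Duality then identifies the surjectivity of the input-to-state map
\begin{equation*}
\Lambda_T\,h=\int_0^T S(T-t)\,G(h(\cdot,t))\,dt\colon L^2(0,T;H^s_p(\mathbb{T}))\to H^s_p(\mathbb{T})
\end{equation*}
with the observability inequality
\begin{equation*}
\|\varphi_0\|_{H^s_p(\mathbb{T})}^2\leq C_{s,T,g}\int_0^T \bigl\|G(S(t)\varphi_0)\bigr\|_{H^s_p(\mathbb{T})}^2\,dt
\end{equation*}
for every mean-zero $\varphi_0\in H^s_p(\mathbb{T})$.

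To prove this, I expand $S(t)\varphi_0$ in Fourier series: $S(t)\varphi_0(x)=\sum_{k\neq 0}\widehat{\varphi_0}(k)\,e^{i(kx+\phi_k t)}$, where the dispersion relation of \eqref{2D-BO} gives
\begin{equation*}
\phi_k=\alpha k|k|-k^3,\qquad k\in\mathbb{Z}\setminus\{0\}.
\end{equation*}
Asymptotically $|\phi_{k+1}-\phi_k|\sim 3k^2\to\infty$, but the competition between the quadratic and cubic terms allows finitely many small gaps and even exact resonances $\phi_k=\phi_j$ between distinct positive and negative modes; this is precisely the spectral obstruction flagged by the authors. The heart of the proof will therefore be a \emph{generalized Ingham inequality} (of Haraux or Baiocchi--Komornik--Loreti type), which applies whenever the sequence $\{\phi_k\}$ exhibits only finitely many near-resonances and an asymptotic gap, to yield
\begin{equation*}
c_T\sum_{k\neq 0}|c_k|^2\leq \int_0^T \Bigl|\sum_{k\neq 0}c_k\,e^{i\phi_k t}\Bigr|^2 dt\leq C_T\sum_{k\neq 0}|c_k|^2,\qquad T>0.
\end{equation*}
Combining this pointwise in $x$ with the positivity of $g$ on $\omega$ and a unique-continuation argument to absorb the mean-subtraction built into the definition of $G$ in \eqref{EQ1}, I obtain the $L^2_p$ observability inequality.

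The passage from $s=0$ to arbitrary $s\geq 0$ is then routine: apply the $L^2$ observability to $(I-\partial_x^2)^{s/2}\varphi_0$ and estimate the commutator of the smooth multiplier $g$ with $(I-\partial_x^2)^{s/2}$, exactly as in \cite{14,1}. Once the $H^s$ observability is in hand, the HUM operator produces a bounded right-inverse of $\Lambda_T$ which delivers the control $h$ together with the norm bound claimed in the theorem. The main obstacle I foresee is the spectral analysis itself: for a given $\alpha>0$ one must locate the finite set of integer pairs $(k,j)$ for which $|\phi_k-\phi_j|$ is small or zero, and then verify that the generalized Ingham framework applies with constants that are uniform in $T$ and depend only on $\alpha$ and $g$. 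The remaining ingredients (HUM duality and commutator estimates) are by now standard in this circle of ideas once the underlying nonharmonic series estimate is secured.
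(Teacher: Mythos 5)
Your overall strategy (HUM duality reducing exact controllability to an observability inequality, proved via a generalized Ingham inequality) is a legitimate alternative to the paper's route, which instead solves the moment problem \eqref{caract5} directly by constructing a biorthogonal family $\{q_j\}$ to the exponentials $\{e^{i\lambda_k t}\}_{k\in\mathbb{I}}$ and writing the control explicitly as $h=\sum_j h_j\overline{q_j}(t)\psi_j(x)$. Both approaches hinge on the same nonharmonic-analysis input, and your identification of the asymptotic gap $|\phi_{k+1}-\phi_k|\to\infty$ as the reason the result holds for every $T>0$ matches Remark \ref{otempo1}.

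However, there is a genuine gap at the heart of your argument. The two-sided Ingham estimate you invoke,
$$c_T\sum_{k\neq 0}|c_k|^2\leq\int_0^T\Bigl|\sum_{k\neq 0}c_k e^{i\phi_k t}\Bigr|^2dt\leq C_T\sum_{k\neq 0}|c_k|^2,$$
is simply false whenever two distinct integers $k\neq j$ satisfy $\phi_k=\phi_j$: taking $c_k=1$, $c_j=-1$ and all other coefficients zero makes the middle term vanish while the left-hand side does not. You acknowledge that such exact resonances occur (for suitable $\alpha$ up to three integers can share an eigenvalue), but any generalized Ingham theorem can only deliver a lower bound indexed by the \emph{distinct} frequencies, i.e., with $\sum_k|c_k|^2$ replaced by a sum over resonant clusters of the squared norms of the clustered coefficient vectors. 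To recover observability of every Fourier mode inside a resonant cluster one needs additional information about the control operator $G$: namely that the vectors $G\psi_{k_{j,0}}, G\psi_{k_{j,1}}, G\psi_{k_{j,2}}$ associated with a cluster are linearly independent, equivalently that the finite Hermitian matrices $M_j$ built from $m_{j,k}=\widehat{G(\psi_j)}(k)$ are invertible (Lemma \ref{invertmatrix}, items $iii)$ and $iv)$). This is the decisive step in the paper's Steps 3--5 and it is absent from your proposal; the phrase ``a unique-continuation argument to absorb the mean-subtraction'' does not supply it, since the mean-subtraction in \eqref{EQ1} only affects the $k=0$ mode (via $m_{j,0}=0$), not the resonant pairs or triples of nonzero modes. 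Without this ingredient the observability inequality, and hence the controllability claim and the norm bound on $h$, do not follow.
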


\begin{rem}\label{increasing}
	The difficulty in the proof of
	Theorem \ref{ControlLa} comes from the fact that the sequence of
	eigenvalues associated to the Benjamin equation is not increasing,
	contrary to the case of the KdV and Benjamin-Ono equations  (see  Figure \ref{Autovalores1} below). The increasing property of the
	eigenvalues is a necessary condition to apply the Ingham's Theorem (see Theorem \ref{Ingham1}). Due to this reason,  we followed an approach implemented by
	Micu,  Ortega,  Rosier and Zhang in \cite{Micu Ortega Rosier and Zhang}
	and used a generalized form of the Ingham's
	inequality.
\end{rem}
\begin{figure}[h!]
	\begin{center}
		\includegraphics[width=13cm]{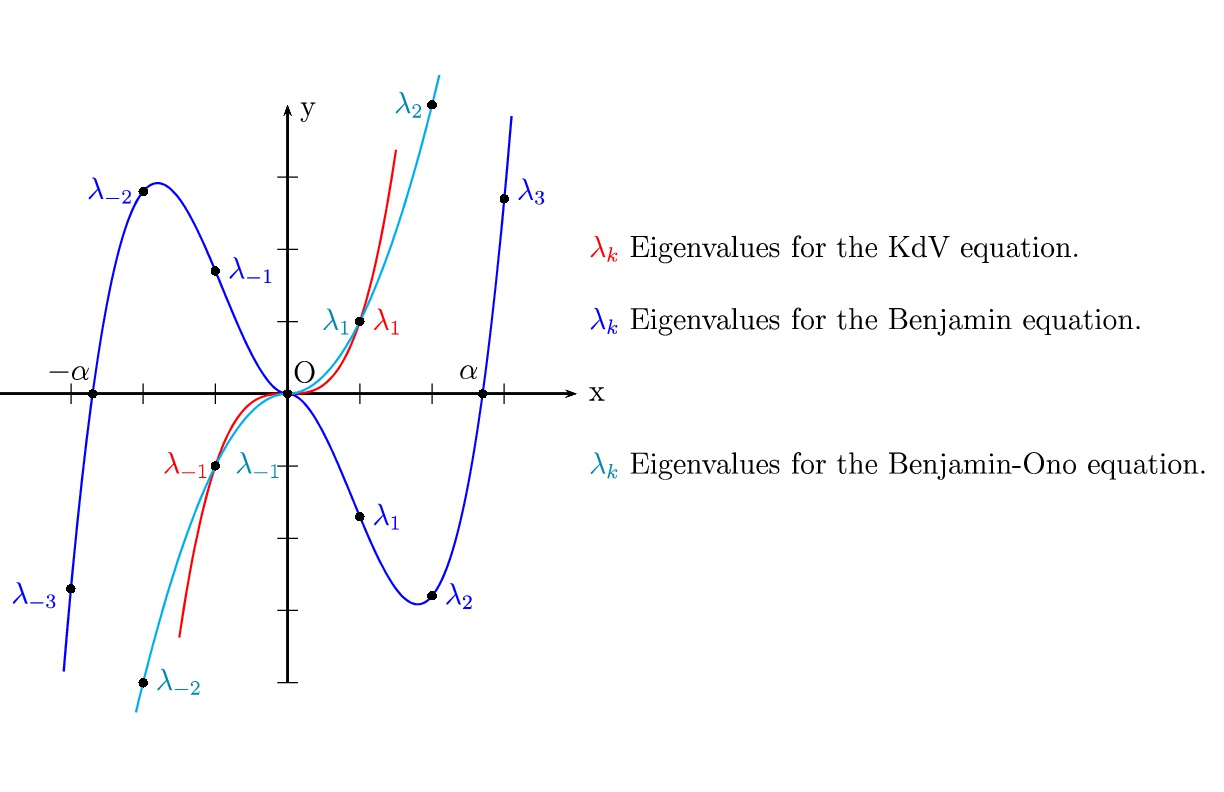}
		\caption{Eigenvalues}\label{Autovalores1}
	\end{center}
\end{figure}

\begin{rem}\label{otempo1}
	Theorem \ref{ControlLa} is strong from the point of view that we do not make restrictions neither on the eigenvalues of
	the operator $A\varphi=\alpha H\partial_{x}^{2}\varphi+\partial_{x}^{3}\varphi$  nor
	on the  time $T.$ 
	It is important to point out that the so called \textquotedblleft asymptotic gap condition" (see condition $iii)$ of Remark \ref{comporteigen} below) that holds for the eigenvalues associated to Benjamin equation was crucial to obtain the exact controllability for any positive time $T.$
\end{rem}

Regarding stabilization, we prove the following results.
\begin{thm}\label{st351}
	Let $\alpha>0,$  $g$ as in \eqref{gcondition},  and  $s\geq 0$ be given. There exist positive constants $M=M(\alpha, g, s)$ and $\gamma=\gamma(g),$ such that for any
	$u_{0}\in H_{p}^{s}(\mathbb{T})$  the unique solution $u\in C([0,\infty);H_{p}^{s}(\mathbb{T}))$
	of the closed-loop system \eqref{2D-BO2} with $Ku=-GG^{\ast}u$ satisfies
$$\|u(\cdot,t)-[u_{0}]\|_{H_{p}^{s}(\mathbb{T})}\leq M
	e^{-\gamma t}\|u_{0}-[u_{0}]\|_{H_{p}^{s}(\mathbb{T})},\;\;\;\text{for all}\;\;
	t\geq 0.$$
\end{thm}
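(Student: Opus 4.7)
\textbf{Proof plan for Theorem \ref{st351}.}

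First I would reduce the problem to the zero-mean subspace. A direct adjoint computation gives $G^{\ast}\phi=g(\cdot)\bigl[\phi-\int_{0}^{2\pi}g\,\phi\bigr]$, so $G^{\ast}$ (and hence $GG^{\ast}$) annihilates constants; combined with the fact that $\int_{0}^{2\pi}(\alpha\mathcal{H}\partial_{x}^{2}u+\partial_{x}^{3}u)\,dx=0$, this shows that $[u(t)]=[u_{0}]$ is conserved along the closed-loop flow. Setting $v:=u-[u_{0}]$, the function $v$ solves the same closed-loop equation with zero-mean data $v_{0}=u_{0}-[u_{0}]$, so it suffices to prove exponential decay of $v$ in $H^{s}_{p,0}(\mathbb{T}):=\{w\in H_{p}^{s}(\mathbb{T}):[w]=0\}$. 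Since $\mathcal{H}\partial_{x}^{2}$ and $\partial_{x}^{3}$ are skew-adjoint in $L^{2}$, the energy identity
$$\tfrac{1}{2}\tfrac{d}{dt}\|v(t)\|_{L^{2}}^{2}=-\|G^{\ast}v(t)\|_{L^{2}}^{2}$$
holds, and exponential $L^{2}$-decay is equivalent to the observability inequality
$$\|v_{0}\|_{L^{2}}^{2}\leq C\int_{0}^{T}\|G^{\ast}v(t)\|_{L^{2}}^{2}\,dt \qquad (\star)$$
for some $T>0$ and all $v_{0}\in H_{p,0}^{0}$.

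The core step is therefore $(\star)$. By a Duhamel/perturbation argument, using that $GG^{\ast}$ is bounded on $L^{2}$, $(\star)$ for the closed-loop equation is equivalent to the same inequality for the homogeneous open-loop equation. Expanding in the Fourier basis, the homogeneous solution reads $z(x,t)=\sum_{k\neq 0}\widehat{v_{0}}(k)\,e^{i(kx+\mu_{k}t)}$ with $\mu_{k}=\alpha k|k|-k^{3}$, so the right-hand side of $(\star)$ becomes an exponential sum in $t$ with frequencies $\{\mu_{k}\}_{k\neq 0}$. The sequence $\{\mu_{k}\}$ is not monotone (Remark \ref{increasing}) but satisfies $|\mu_{k+1}-\mu_{k}|\to\infty$ as $|k|\to\infty$ (the asymptotic gap condition noted in Remark \ref{otempo1}). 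Hence the classical Ingham theorem does not apply, but the generalized Ingham inequality of Micu, Ortega, Rosier and Zhang does: it isolates the finitely many clustered low frequencies, handled by a unique continuation argument exploiting $g>0$ on the open set $\omega$, while the high-frequency block is controlled directly by the gap.

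Once $(\star)$ is established, integrating the energy identity on $[0,T]$ gives $\|v(T)\|_{L^{2}}^{2}\leq(1-1/C)\|v_{0}\|_{L^{2}}^{2}$, and iterating on the intervals $[nT,(n+1)T]$ yields exponential decay in $L^{2}$ at some rate $\gamma=\gamma(g)>0$. To lift this to $H_{p}^{s}$ for $s>0$, I would commute the closed-loop equation with the spectral multiplier $\Lambda^{s}:=(I-\partial_{x}^{2})^{s/2}$: setting $w:=\Lambda^{s}v$,
$$\partial_{t}w-\alpha\mathcal{H}\partial_{x}^{2}w-\partial_{x}^{3}w+GG^{\ast}w=[\Lambda^{s},GG^{\ast}]v,$$
and since $g$ is smooth the commutator $[\Lambda^{s},GG^{\ast}]$ is of order $s-1$, so its contribution is absorbed by the $L^{2}$ decay through a Gronwall argument (and a short bootstrap from $s=0$ to $s=1$, then iterated, or interpolated). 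This produces exponential decay in $H_{p}^{s}$ at the same rate $\gamma$ (up to constants), which is the statement of the theorem.

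The main obstacle is the observability inequality $(\star)$. The failure of monotonicity of the Benjamin spectrum rules out classical Ingham-type arguments that work cleanly for KdV and BO, so the delicate part is verifying the hypotheses of the generalized Ingham inequality for the Benjamin frequencies $\mu_{k}=\alpha k|k|-k^{3}$, and closing the finite-dimensional unique continuation step at the low frequencies where no spectral gap is available and $g$'s support properties must be fully exploited.
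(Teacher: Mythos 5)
Your proposal is correct in substance and its core engine is the same as the paper's: the reduction to zero mean, the energy identity $\tfrac12\tfrac{d}{dt}\|v\|_{L^2}^2=-\|Gv\|_{L^2}^2$ (note $G=G^{\ast}$ on $L^2(\mathbb{T})$), the use of the generalized Ingham inequality together with the invertibility of the finite Gram-type matrices built from $\widehat{G\psi_j}(k)$ to handle the non-monotone Benjamin frequencies and the (at most triple) eigenvalue coincidences, and the iteration over intervals $[nT,(n+1)T]$. Where you genuinely diverge is in two places. First, for the $L^2$ step you go directly through the open-loop observability inequality $(\star)$ plus a Duhamel perturbation to pass to the closed loop; the paper instead runs the Russell--Zhang/Linares--Ortega auxiliary-control argument (solve the exact control problem from $0$ to $u(\cdot,T)$, pair the two solutions, and extract the lower bound $\|Gu\|_{L^2(0,T;L^2)}^2\geq c\,\|u(\cdot,T)\|_{L^2}^2$), although its Corollary \ref{controloperator1} is precisely your inequality $(\star)$ for the group $U(\cdot)$, so the content is equivalent. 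Second, and more substantially, for $s>0$ you commute with $\Lambda^{s}$ and absorb the order-$(s-1)$ commutator $[\Lambda^{s},GG^{\ast}]$ by bootstrap; the paper instead differentiates the equation in time, applies the $s=0$ decay to $\partial_t u$ (which solves the \emph{same} closed-loop equation), recovers $\|\partial_x^3u\|_{L^2}$ from the equation via Gagliardo--Nirenberg, and then interpolates. The paper's route keeps the decay rate $\gamma=\gamma(g)$ exactly at every regularity level because no source term appears; your Duhamel treatment of the commutator produces factors $t\,e^{-\gamma t}$ at each integer step, so you should either settle for a slightly smaller rate (still depending only on $g$ if you fix, say, $\gamma/2$ uniformly) or note that the polynomial loss is absorbed into $M$ over each step. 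With that small bookkeeping point addressed, your plan closes.
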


Furthermore, using an observability inequality derived from the exact controllability result we can prove that the exponential decay rate of the resulting closed-loop system \eqref{2D-BO2} is as large as one desires. This is stated in the following theorem.

\begin{thm}\label{estabilization}
	Let $s\geq 0,$ $\alpha>0,$ $\lambda>0,$ and $u_{0}\in H_{p}^{s}(\mathbb{T})$ be given. There exists a bounded linear  operator $K_{\lambda}$  from $H_{p}^{s}(\mathbb{T})$ to $H_{p}^{s}(\mathbb{T})$ such that
	the unique solution $u\in C([0,+\infty), H_{p}^{s}(\mathbb{T}))$ of the closed-loop system 
	\eqref{2D-BO2} with $Ku=K_{\lambda}u$
	satisfies
	$$\|u(\cdot,t)-[u_{0}]\|_{H_{p}^{s}(\mathbb{T})}\leq
	M\;e^{-\lambda\;t}\|u_{0}-[u_{0}]\|_{H_{p}^{s}(\mathbb{T})},$$
for all $t\geq0,$ and some positive constant $M=M(g,\lambda, \alpha, s).$
\end{thm}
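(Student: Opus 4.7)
The plan is to derive the exponential stabilization at the arbitrary rate $\lambda$ from Theorem \ref{ControlLa} by the Urquiza--Komornik scheme that has become standard for linear dispersive control systems (see \cite{14} for the KdV analogue on the torus). Fix $\lambda>0$ and $T>0$, and write $A\varphi=\alpha\mathcal{H}\partial_x^2\varphi+\partial_x^3\varphi$, whose associated semigroup $S(t)$ defines a unitary group on the mean-zero subspace of $H_{p}^{s}(\mathbb{T})$ since $A^{\ast}=-A$ (the Hilbert transform is skew-adjoint and commutes with every power of $\partial_x$). A standard HUM/duality argument converts Theorem \ref{ControlLa} into the observability inequality
\begin{equation}\label{obsprop}
\|\varphi_0\|_{H_{p}^{s}}^{2}\;\leq\;C_T\int_0^T\|G^{\ast}S(t)\varphi_0\|_{H_{p}^{s}}^{2}\,dt
\end{equation}
valid for every mean-zero $\varphi_0\in H_{p}^{s}(\mathbb{T})$, with $G^{\ast}$ the adjoint of the control operator defined in \eqref{EQ1}.

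From \eqref{obsprop} I would deduce a weighted observability inequality for the shifted system $\varphi_t=(A-\lambda I)\varphi$: because $S(t)$ is unitary, insertion of the factor $e^{-2\lambda t}$ in the time integral costs only a multiplicative $e^{-2\lambda T}$. This makes it natural to introduce the weighted controllability Gramian
\[
\Lambda_\lambda\varphi\;:=\;\int_0^T e^{-2\lambda t}\,S(t)\,G\,G^{\ast}\,S(t)^{\ast}\varphi\,dt,
\]
seen as a bounded, self-adjoint operator on the mean-zero subspace of $H_{p}^{s}(\mathbb{T})$; the weighted observability forces $\Lambda_\lambda$ to be positive and boundedly invertible there. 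I would then define the feedback
\[
K_\lambda u\;:=\;-G^{\ast}\Lambda_\lambda^{-1}(u-[u]),
\]
whose boundedness $H_{p}^{s}(\mathbb{T})\to H_{p}^{s}(\mathbb{T})$ is inherited from those of $G^{\ast}$ and $\Lambda_\lambda^{-1}$. To conclude the exponential decay, I would verify that $V(u)=\langle\Lambda_\lambda^{-1}(u-[u]),u-[u]\rangle$ defines a Lyapunov functional satisfying $\frac{d}{dt}V(u(t))\leq-2\lambda V(u(t))$ along every trajectory of \eqref{2D-BO2} with $Ku=K_\lambda u$. The required differential inequality reduces to an algebraic Riccati-type identity for $\Lambda_\lambda$, obtained by differentiating its defining integral in $T$ and using the group property of $S(t)$. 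Gronwall's lemma and the equivalence of $V(u)^{1/2}$ with $\|u-[u]\|_{H_{p}^{s}(\mathbb{T})}$ then yield the stated estimate, with $M=M(g,\lambda,\alpha,s)$.

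The main obstacle will be establishing \eqref{obsprop} at the Sobolev level $s>0$, not only at $s=0$ where it is directly dual to Theorem \ref{ControlLa}. Since $G$ is multiplication by the smooth function $g$ followed by subtraction of a weighted mean, $G$ commutes with the Fourier multiplier $(I-\partial_x^2)^{s/2}$ modulo a compact remainder of lower order, so \eqref{obsprop} propagates from $s=0$ to $s>0$ by a commutator/regularization argument analogous to the one already used in the proof of Theorem \ref{ControlLa} to transfer the generalized Ingham inequality from $L^{2}$ up the Sobolev scale. A secondary point is that $K_\lambda$ must preserve the mean-zero subspace so that the Lyapunov argument runs on an invariant set; both the explicit form \eqref{EQ1} of $G$ and the subtraction of $[u]$ in the definition of $K_\lambda$ enforce this automatically. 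With these two ingredients in place, the Urquiza--Komornik construction produces the desired bounded feedback operator $K_\lambda$ together with the exponential decay at the prescribed rate $\lambda$.
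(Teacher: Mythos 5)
Your construction is, at its core, the same as the paper's: the weighted controllability Gramian you call $\Lambda_\lambda$ is the operator $L_\lambda$ of \eqref{st1} (up to the sign of the time argument inside the group, which you will need to fix when you close the Riccati identity), its invertibility on the mean-zero subspace follows from the observability inequality of Corollary \ref{controloperator1}, and the resulting feedback is $-GG^{\ast}L_\lambda^{-1}$. Note that the forcing term in \eqref{2D-BO2} must carry both factors $G$ and $G^{\ast}$, not just $G^{\ast}$ as written in your definition of $K_\lambda$; otherwise the control does not have the form $G h$ and the Lyapunov computation does not close. Where you genuinely differ is in the two supporting steps. First, you prove the decay at rate $\lambda$ by exhibiting the Lyapunov functional $V(u)=\langle\Lambda_\lambda^{-1}(u-[u]),u-[u]\rangle$ and a Gronwall argument; the paper instead quotes Slemrod's Theorem 2.1 for the $s=0$ case, which encapsulates exactly this computation, so nothing is lost there.

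Second, and this is where your plan has a real gap: for $s>0$ you propose to establish the observability inequality at the level of $H_{p}^{s}(\mathbb{T})$ by arguing that $G$ commutes with $(I-\partial_x^2)^{s/2}$ modulo a compact lower-order remainder, so that the $L^{2}$ inequality ``propagates'' up the Sobolev scale. An observability inequality is not stable under compact perturbations of the observation operator without an additional compactness--uniqueness (unique continuation) argument to absorb the remainder, and you supply none; moreover the analogy you invoke with the proof of Theorem \ref{ControlLa} is inaccurate, since that proof obtains the $H^{s}$ estimate directly from the explicit Fourier-coefficient formula for the control together with the uniform lower bound $m_{k,k}\geq\beta$ of Lemma \ref{invertmatrix}, not from any commutator. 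The paper avoids higher-order observability altogether: it establishes that $L_\lambda$ is an isomorphism of $H_{0}^{s}(\mathbb{T})$ (Lemma \ref{st9}), proves the decay only in $L^{2}$, and then bootstraps to $H_{0}^{3}(\mathbb{T})$ by differentiating the closed-loop equation in $t$ (so that $w=\partial_t u$ solves the same damped equation), recovering $\partial_x^3 u$ from the equation itself, and finally interpolating to reach all intermediate $s$. You should either adopt that bootstrap or supply the missing compactness--uniqueness step for your commutator argument.
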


This theorem implies that for any given number $\lambda>0$ we can design a linear feedback control law such that the exponential decay rate of the resulting closed-loop system is $\lambda.$

 The paper is organized  as follows:
In section \ref{preliminares}  we list notations and a series of preliminary results which are used throughout this work.
 In Section \ref{section4}  we prove well-posedness results. The main  results regarding controllability and    stabilization are respectively proved  in Sections \ref{section5} and  \ref{section6}. Finally, in Section \ref{conc-rem} some concluding remarks and future works are presented.

\section{Preliminaries}\label{preliminares}

In this section we introduce some definitions, notations, properties and results related with Periodic Distributions, Sobolev spaces, and the Hilbert transform. We also introduce  Riesz basis, its properties and Ingham's inequality.

 We denote by $C^{\infty}_{p}(\mathbb{T})$  the space of all functions defined
on $\mathbb{T}$ that are infinitely differentiable and by $C_{p}(\mathbb{T})$ the space of all functions
defined on $\mathbb{T}$ that are continuous. 
We denote by $\mathcal{D}'(\mathbb{T})$ the space of all periodic distributions which is the dual space of $C^{\infty}_{p}(\mathbb{T}).$

\subsection{\textbf{Sobolev Spaces of $L^{2}$ type}}

Here we will introduce some definitions and results that involve Sobolev spaces. 
\begin{defn}
Let $s\in \mathbb{R},$ the Sobolev space of order $s$ on torus is defined by
$$H^{s}_{p}(\mathbb{T})=\left\{f\in \mathcal{D}'(\mathbb{T})\;/\;\|f\|_{H^{s}_{p}(\mathbb{T})}^{2}:=2\pi\sum_{k=-\infty}^{\infty}
(1+|k|^{2})^{s}|\widehat{f}(k)|^{2}<\infty \right\},$$
\end{defn}
where $\widehat{f}(k)$ is the $k^{th}-$Fourier coefficient of $f$ given by  
$$\widehat{f}(k)=\frac{1}{2\pi} \int_{0}^{2\pi}f(x)e^{-ikx}dx,\;\;\;\forall\;k\in\mathbb{Z}.$$

For all $s \in \mathbb{R},$  $ H^{s}_{p}(\mathbb{T}) $ is a Hilbert space
with the inner product
$$\displaystyle{(f\;, \;g)_{H^{s}_{p}(\mathbb{T})}= 2\pi\sum_{k\in \mathbb{Z}}
(1+|k|^{2})^{s}\widehat{f}(k)\;\overline{\widehat{g}(k)}<\infty.}$$
If $s=0$ then $ H^{0}_{p}(\mathbb{T}) $ is isometrically isomorphic to $L_{p}^{2}(\mathbb{T}).$
Moreover, given $s,r\in\mathbb{R}$ with $s\geq r$ one has
$$ H^{s}_{p}(\mathbb{T})\hookrightarrow H^{r}_{p}(\mathbb{T})$$ and this immersion is dense. We define a Fourier's orthonormal basis $\{ \psi_{k} \}_{k\in \mathbb{Z}}$ for $L_{p}^{2}(\mathbb{T})$ by
\begin{equation}\label{spi}
\psi_{k}(x):=\frac{e^{ikx}}{\sqrt{2 \pi}},\;\;\forall k\in \mathbb{Z},\;\;x\in \mathbb{T}.
\end{equation}

The  following Remark recalls a  characterization for Sobolev spaces. 
\begin{rem}\label{baseH1}
For $ s\geq 0,$  it is known that  $v\in H^{s}_{p}(\mathbb{T}) $ if and only if, for
$v(x)=\sum\limits_{k \in \mathbb{Z}}v_{k}\psi_{k}(x)$
we have that $\sum\limits_{k \in \mathbb{Z}}(1+|k|)^{2s}|v_{k}|^{2}<\infty.$
\end{rem}

\subsection{\textbf{The Hilbert transform (see \cite[page 66]{2})}}
Recall that   the Hilbert transform $\mathcal{H}$ defined by \eqref{hilberttramsf} can also be written as 
\begin{equation}\label{HilbertFrequences}
\widehat{\mathcal{H}(f)}(k)= -i\;\text{sgn}(k)\widehat{f}(k),\;\;\;\forall k\in\mathbb{Z}.
\end{equation}

The Hilbert transform  is  an isometry in $H_{p}^{s}(\mathbb{T})$
(see \cite[page 210]{6}) and satisfies the following properties.

\begin{prop}[The Hilbert Transform Properties]\label{ptf}
Assume  $f,\;g\in L_{p}^{2}(\mathbb{T}),$ then
\begin{equation}\label{Hprop1}
  \int_{\mathbb{T}}f(x)\;\overline{g}(x)\;dx=
  \int_{\mathbb{T}}\mathcal{H}(f)(x)\;\overline{\mathcal{H}(g)(x)}\;dx,
\end{equation}
\begin{equation}\label{Hprop2}
  \int_{\mathbb{T}}f(x)\;\overline{\mathcal{H}(g)(x)}\;dx=
  -\int_{\mathbb{T}}\mathcal{H}(f)(x)\;\overline{g(x)}\;dx,
\end{equation}
\begin{equation}\label{Hprop3}
\mathcal{H}\left(f\cdot \mathcal{H}(g)+\mathcal{H}(f)\cdot g\right)(y)=\mathcal{H}(f)(y)\cdot \mathcal{H}(g)(y)-f(y)\cdot g(y),
\end{equation}
\begin{equation}\label{Hprop4}
  \mathcal{H}(f)(x)=-i\sum_{k\in \mathbb{Z}}
  \text{sgn}(k)\widehat{f}(k)\;e^{ikx}.
\end{equation}

\end{prop}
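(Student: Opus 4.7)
The plan is to derive all four identities from the Fourier-multiplier description \eqref{HilbertFrequences}, namely $\widehat{\mathcal H f}(k)=-i\,\mathrm{sgn}(k)\widehat f(k)$, combined with Parseval's identity on $L^{2}_{p}(\mathbb T)$.

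Identity \eqref{Hprop4} is immediate: writing the Fourier series $f(x)=\sum_{k}\widehat f(k)e^{ikx}$ and inserting the symbol $-i\,\mathrm{sgn}(k)$ coefficient by coefficient produces the stated representation. With \eqref{Hprop4} in hand, I would tackle \eqref{Hprop1} and \eqref{Hprop2} via Parseval. For \eqref{Hprop1}, inserting the multiplier twice turns $\widehat f(k)\overline{\widehat g(k)}$ into $|\mathrm{sgn}(k)|^{2}\widehat f(k)\overline{\widehat g(k)}$, so the Fourier-series side matches up to the $k=0$ term (harmless on the class of functions considered). For \eqref{Hprop2}, moving the Hilbert transform from one factor to the other exchanges the symbol $-i\,\mathrm{sgn}(k)$ with its conjugate $i\,\mathrm{sgn}(k)$, which is precisely the skew-adjointness statement. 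Both identities thus collapse to a one-line Fourier computation.

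The main obstacle is identity \eqref{Hprop3} (Cotlar's identity), because the pointwise product $f\cdot\mathcal H g$ couples all Fourier modes and cannot be read off directly on the Fourier side. My approach would be to verify it first on trigonometric monomials $f(x)=e^{imx}$, $g(x)=e^{inx}$ by a brief case analysis on the signs of $m$, $n$, and $m+n$. In every case, both sides reduce to the same multiple of $e^{i(m+n)x}$: they vanish whenever $m$ and $n$ have opposite signs (and neither is zero), and otherwise they collapse to $-2\,e^{i(m+n)x}$ or to $-e^{i(m+n)x}$ when one of $m,n$ equals zero. Bilinearity then extends the identity to all trigonometric polynomials.

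A density and continuity argument in $L^{2}_{p}(\mathbb T)$ finishes \eqref{Hprop3}: one first works with smooth $f,g$ so that the pointwise products are classically meaningful, applies the identity on trigonometric polynomial approximants, and passes to the limit using the boundedness of $\mathcal H$ on $L^{2}_{p}(\mathbb T)$ together with the continuity of multiplication $L^{2}_{p}\times L^{\infty}_{p}\to L^{2}_{p}$ (or the bilinear estimate appropriate to the ambient space). The delicate step is thus not the algebraic verification on monomials but the extension to general $L^{2}$ data, which is standard once the identity is known on a dense class.
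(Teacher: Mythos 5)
Your treatment of \eqref{Hprop1}, \eqref{Hprop2} and \eqref{Hprop4} is exactly the paper's: the authors also dispose of them by Parseval's identity and the multiplier formula \eqref{HilbertFrequences}. Where you genuinely diverge is \eqref{Hprop3}: the paper does not prove it at all but cites Pandey's book, whereas you supply a self-contained argument (verification on exponentials $e^{imx}$, $e^{inx}$, then bilinearity and density). Your monomial computation reduces to the scalar identity $(\mathrm{sgn}(m)+\mathrm{sgn}(n))\,\mathrm{sgn}(m+n)=\mathrm{sgn}(m)\,\mathrm{sgn}(n)+1$, which indeed holds in every case \emph{except} $m=n=0$, where the left side is $0$ and the right side is $1$; so \eqref{Hprop3} actually fails for $f=g=1$ (left side $0$, right side $-1$). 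This is the same zero-mode defect you already noticed for \eqref{Hprop1}: since the periodic Hilbert transform annihilates constants, both \eqref{Hprop1} and \eqref{Hprop3} as stated are only true modulo the $k=0$ Fourier coefficients, a caveat the paper glosses over and that is harmless in its applications (the functions to which these identities are applied are derivatives, hence mean-free). The one step of your argument that deserves more care is the final density passage for \eqref{Hprop3}: for general $f,g\in L^{2}_{p}(\mathbb{T})$ the product $f\cdot\mathcal{H}(g)$ lies only in $L^{1}$, on which $\mathcal{H}$ is not bounded, so convergence of $\mathcal{H}(f_{n}\mathcal{H}g_{n})$ requires either the weak $(1,1)$ bound or the restriction to bounded/smooth factors you mention; as stated your limit argument is complete only on the smaller class where one factor is essentially bounded. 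Net comparison: your route costs a short case analysis and a careful limiting step but buys an actual proof of Cotlar's identity in place of a citation, at the price of having to state explicitly the mean-zero convention under which \eqref{Hprop1} and \eqref{Hprop3} are literally true.
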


\begin{proof}
To prove \eqref{Hprop1} and \eqref{Hprop2}, we use the Parseval's identity. The proof of \eqref{Hprop3} can be found in page 80 \cite{2}, and \eqref{Hprop4} is a direct consequense of \eqref{HilbertFrequences}. 
\end{proof}

\subsection{Riesz basis }\label{s5}
In  this subsection we record some definitions and results related to Riesz basis. Most of these can be found in Heil \cite{9}. In what follows, $J$ represents a
countable set of indices which could be finite or infinite.

\begin{defn}[{\cite[page 21]{9}}]\label{complet}
	Let $\{x_{n}\}_{n\in J}$ be a sequence in a normed linear space $X$. The finite linear span,
	or simply the span of $\{x_{n}\}_{n\in J}$  is the set of all finite linear combinations of
	elements of  $\{x_{n}\}_{n\in J}$
	$$\text{span} \left\{\{x_{n}\}_{n\in J}\right\}=
	\left\{\sum_{n=-N}^{N}c_{n}x_{n}:\;\text{for all} \;N> 0 \;\;\text{and}\;\; c_{1},...,c_{n}\in \mathrm{F}\right\}.$$
	
	We say that
	$\{x_{n}\}_{n\in J}$ is complete in $X$ if $\overline{\text{span}\left\{ \{x_{n}\}_{n\in J}\right\}}=X$
\end{defn}

\begin{defn}\label{Briesz}
	Let $\{x_{n}\}_{n\in J}$ be a sequence in a Hilbert space $X.$
	\begin{description}
		\item[i) Riesz basis] $\{x_{n}\}_{n\in J}$ is a Riesz basis if it is equivalent to some (and therefore every)
		orthonormal basis for $X.$
		
		\item[ii) Bessel sequence] A sequence $\{x_{n}\}_{n\in J}$ in a Hilbert space $X$ is a Bessel sequence if
		$$\forall x\in  X,\;\; \sum_{n \in J}|\langle x,x_{n}\rangle|^{2}<\infty.$$
		
	\end{description}
	
\end{defn}

\begin{defn}\label{Biorthogsystem}
	Given a Banach space $X$ and sequences $\{x_{n}\}_{n\in J}\subseteq X$ and
	$\{a_{n}\}_{n\in J}\subseteq X^{\ast},$ we say that $\{a_{n}\}$ is biorthogonal to
	$\{x_{n}\}$ if $\langle x_{m},a_{n}\rangle=\delta_{nm}$ for every $n,m\in J.$
	We call $\{a_{n}\}$ a biorthogonal system or a dual system of $\{x_{n}\}.$
\end{defn}

\begin{thm}[{\cite[page 197]{9}}]\label{BasRieszTheo} 
	Let $\{x_{n}\}_{n\in J}$ be a sequence in a Hilbert space $X.$ Then the following statements are equivalent.
	
	\begin{enumerate}
		\item $\{x_{n}\}_{n\in J}$ is a Riesz basis for $X.$
		
		\item $\{x_{n}\}_{n\in J}$ is a basis for $X,$ and
		$$\sum_{n \in J}c_{n}x_{n}\;\;\text{converges}\;\;\Leftrightarrow\;\;\sum_{n \in J}|c_{n}|^{2}\;\;\text{converges}.$$
		
		\item $\{x_{n}\}_{n\in J}$ is complete in $X$ and there exist constants $A,\;B>0$ such that
		$$\text{for all}\;\;c_{1},...,c_{N}\;\;\text{scalars,}\;\;A\sum_{n=1}^{N}|c_{n}|^{2}\leq \|\sum_{n=1}^{N}c_{n}x_{n}\|^{2}_{X}
		\leq B\sum_{n=1}^{N}|c_{n}|^{2}.$$
		
		\item $\{x_{n}\}_{n\in J}$ is a complete Bessel sequence and possesses a biorthogonal system $\{y_{n}\}_{n\in J}$ that is also
		a complete Bessel sequence.
	\end{enumerate}
\end{thm}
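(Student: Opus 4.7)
The plan is to prove the cyclic chain of implications $(1) \Rightarrow (2) \Rightarrow (3) \Rightarrow (4) \Rightarrow (1)$, organized around the \emph{synthesis operator} $T \colon \ell^{2}(J) \to X$ formally given by $T(\{c_n\}) = \sum_n c_n x_n$. Each of (1)--(4) is essentially a disguised form of the assertion that $T$ extends to a bounded bijection of $\ell^{2}(J)$ onto $X$, so the proof will consist of making this extension and its properties explicit under each hypothesis in turn.

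For $(1) \Rightarrow (2)$ the hypothesis delivers, by definition, an orthonormal basis $\{e_n\}$ of $X$ and a bounded invertible operator $U \colon X \to X$ with $U e_n = x_n$. Since $\{e_n\}$ is a basis, $\{U e_n\} = \{x_n\}$ is a basis too, and applying $U$ and $U^{-1}$ to partial sums, together with the Parseval identity for $\{e_n\}$, makes the convergence equivalence $\sum c_n x_n \leftrightarrow \sum |c_n|^{2} < \infty$ transparent. For $(2) \Rightarrow (3)$, condition (2) gives $T$ defined on all of $\ell^{2}(J)$ as a pointwise limit of the bounded partial-sum operators; Banach--Steinhaus then yields that $T$ is bounded, and since (2) also makes $T$ a bijection onto $X$, the Open Mapping Theorem provides a bounded inverse, from which the constants $A, B$ are read off.

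For $(3) \Rightarrow (4)$, the upper bound in (3) is the statement that $T$ is bounded on $\ell^{2}(J)$; its Hilbert adjoint $T^{\ast} \colon X \to \ell^{2}(J)$ is $T^{\ast}x = \{\langle x, x_n\rangle\}$, whose boundedness is exactly the Bessel inequality for $\{x_n\}$. The lower bound forces $T$ bounded below, hence injective with closed range; together with completeness this gives $T$ a bounded bijection. Setting $y_n := (T^{\ast})^{-1} e_n$ with $\{e_n\}$ the standard basis of $\ell^{2}(J)$, one computes $\langle x_m, y_n\rangle = \langle T e_m, (T^{-1})^{\ast} e_n\rangle = \langle e_m, e_n\rangle = \delta_{mn}$; boundedness of $(T^{\ast})^{-1}$ makes $\{y_n\}$ Bessel, while completeness is immediate, since $\langle x, y_n\rangle = 0$ for all $n$ forces $T^{\ast}x = 0$ and hence $x=0$.

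Finally, $(4) \Rightarrow (1)$ starts from the bounded synthesis operator $T$ of the Bessel sequence $\{x_n\}$. For each $x \in X$ the sequence $\{\langle x, y_n\rangle\}$ lies in $\ell^{2}(J)$ by Bessel of $\{y_n\}$, and $\sum \langle x, y_n\rangle x_n$ converges in $X$ by Bessel of $\{x_n\}$; pairing its sum $z$ with each $y_m$ and invoking biorthogonality yields $\langle z, y_m\rangle = \langle x, y_m\rangle$, so completeness of $\{y_n\}$ forces $z = x$. Hence $T$ is surjective, and biorthogonality directly gives injectivity, so $T$ is a bounded bijection of $\ell^{2}(J)$ onto $X$. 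Composing with any unitary identification of $\ell^{2}(J)$ with $X$ (which exists because the span of a countable set makes $X$ separable) produces a bounded invertible $U \colon X \to X$ sending a fixed orthonormal basis of $X$ onto $\{x_n\}$, realizing $\{x_n\}$ as a Riesz basis. The main technical moment is the passage $(3) \Rightarrow (4)$, where a biorthogonal \emph{complete Bessel} dual must be extracted from the two-sided estimate via the adjoint of the inverse synthesis map; once this is in hand, the remaining implications reduce to standard functional-analytic manipulations.
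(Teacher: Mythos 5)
The paper offers no proof of this statement: it is imported verbatim from Heil's \emph{A Basis Theory Primer} (the citation \cite{9}, page 197) and used as a black box in Step 1 of the proof of Theorem \ref{ControlLa}, so there is no in-paper argument to compare yours against. Your cyclic proof via the synthesis operator $T\colon\ell^{2}(J)\to X$ is the standard one (essentially Heil's own) and is correct: each implication is sound, including the two delicate points, namely extracting the dual system $y_{n}=(T^{\ast})^{-1}e_{n}$ from the two-sided estimate in $(3)\Rightarrow(4)$ and reconstructing $x=\sum_{n}\langle x,y_{n}\rangle x_{n}$ from the two complete Bessel sequences in $(4)\Rightarrow(1)$. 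One small slip: in your completeness argument for $\{y_{n}\}$, the condition $\langle x,y_{n}\rangle=0$ for all $n$ reads $\langle T^{-1}x,e_{n}\rangle=0$ (since $((T^{\ast})^{-1})^{\ast}=T^{-1}$), so it forces $T^{-1}x=0$ rather than $T^{\ast}x=0$; the conclusion $x=0$ is unaffected. You also implicitly fix an enumeration of the countable index set $J$ when speaking of bases and partial sums, which is the usual convention and harmless here.
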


\begin{defn}\label{Minimalseq}
	We say that a sequence $\{x_{n}\}_{n\in J}$ in a Banach space $X$ is minimal if no vector $x_{m}$
	lies in the closed span of the other vectors $x_{n},$ it means,
	$$\forall m \in J,\;\;x_{m}\notin \overline{\text{span}\{\{x_{n}\}_{n\in J,\;n\neq m}\}}.$$
	
	A sequence that is both minimal and complete is said to be exact.
\end{defn}

\begin{lem}[{\cite[page 155]{9}}]\label{carctseqmini}
	Let $\{x_{n}\}_{n\in J}$ be a sequence in a Banach space $X.$ Then
	\begin{itemize}
		\item[1.]  there exists $\{a_{n}\}_{n\in J}\subseteq X^{\ast}\;\;\text{biorthogonal to}\;\;
		\{x_{n}\}_{n\in J}\;\Leftrightarrow\;\{x_{n}\}_{n\in J}\;\;\text{ is minimal}.$
		
		\item[2.] there exists a unique $\{a_{n}\}_{n\in J}\subseteq X^{\ast}\;\text{biorthogonal to}\;
		\{x_{n}\}_{n\in J}\;\Leftrightarrow\;\{x_{n}\}_{n\in J}\;\text{ is exact}.$
	\end{itemize}
	\end{lem}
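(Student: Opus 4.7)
The plan is to prove both parts of the lemma via the Hahn–Banach theorem, which is the natural tool for passing between geometric separation statements (minimality, completeness) and the existence of continuous linear functionals with prescribed behaviour.

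For part 1, the implication that existence of a biorthogonal system forces minimality is the easy direction: if $\{a_n\}\subseteq X^\ast$ is biorthogonal to $\{x_n\}$ and, for some $m$, one had $x_m\in\overline{\mathrm{span}\{x_n:n\neq m\}}$, then $x_m$ would be a limit of finite linear combinations $y_k=\sum_{n\neq m}c_n^{(k)}x_n$. Continuity of $a_m$ would give $a_m(y_k)\to a_m(x_m)=1$, while biorthogonality forces $a_m(y_k)=0$ for every $k$, a contradiction. The nontrivial direction is the converse: assuming minimality, for each fixed $m\in J$ the vector $x_m$ does not lie in the closed subspace $Y_m:=\overline{\mathrm{span}\{x_n:n\neq m,\,n\in J\}}$. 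Apply the Hahn–Banach separation theorem to produce $a_m\in X^\ast$ with $a_m\equiv 0$ on $Y_m$ and $a_m(x_m)=1$. Running this construction over all $m\in J$ yields the required biorthogonal system $\{a_n\}_{n\in J}$.

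For part 2, one direction is to assume exactness (minimal and complete) and derive existence and uniqueness of the biorthogonal system. Existence follows from part 1 applied to minimality. For uniqueness, suppose $\{a_n\}$ and $\{b_n\}$ are both biorthogonal to $\{x_n\}$; then the functional $\phi_n:=a_n-b_n$ vanishes on every $x_m$, hence on $\mathrm{span}\{x_m:m\in J\}$, and by continuity on its closure, which equals $X$ by completeness; therefore $\phi_n\equiv 0$, i.e.\ $a_n=b_n$. Conversely, assume a unique biorthogonal sequence $\{a_n\}$ exists. Minimality is immediate from part 1. If $\{x_n\}$ were not complete, then $Z:=\overline{\mathrm{span}\{x_n:n\in J\}}$ would be a proper closed subspace of $X$; by Hahn–Banach one could choose a nonzero $\varphi\in X^\ast$ with $\varphi\equiv 0$ on $Z$. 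Then for every $n\in J$ the sequence $\{a_n+\varphi\}$ would satisfy $(a_n+\varphi)(x_m)=a_n(x_m)+0=\delta_{nm}$, giving a second biorthogonal system distinct from $\{a_n\}$, contradicting uniqueness. Hence $\{x_n\}$ is complete, and together with minimality this is exactness.

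The two steps I would single out as the main content are the Hahn–Banach constructions: separating $x_m$ from the closed span of the other $x_n$'s (in part 1) and producing a nontrivial annihilator of $Z$ (in the completeness half of part 2). Both are standard consequences of the geometric form of Hahn–Banach, so no genuine analytic obstacle arises; the only delicate point is to make sure continuity of the constructed functionals is used correctly when passing from finite linear combinations to their limits, as in the opening paragraph above.
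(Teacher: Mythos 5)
Your proof is correct and is essentially the standard argument from Heil's book, which the paper simply cites without reproducing a proof: the geometric form of Hahn--Banach to separate $x_m$ from $\overline{\mathrm{span}}\{x_n:n\neq m\}$ for part 1, and the perturbation $\{a_n+\varphi\}$ by a nonzero annihilator of the closed span to handle the completeness half of part 2. No gaps; the only hypotheses you rely on (closedness of the spans, $d(x_m,Y_m)>0$ under minimality, continuity of the functionals when passing to limits) are all correctly invoked.
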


\begin{thm}[{\cite[page 171]{9}}]\label{biort}
	If $\{x_{n}\}$ is a basis for a reflexive Banach space $X,$ then its biorthogonal system $\{a_{n}\}$ is a basis
	for $X^{\ast}.$
\end{thm}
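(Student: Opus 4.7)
The plan is to show that for every $\phi\in X^{\ast}$, the partial sums $\sum_{n=1}^{N}\phi(x_{n})\,a_{n}$ converge to $\phi$ in the norm of $X^{\ast}$, with coefficients uniquely determined by biorthogonality.

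First I would assemble the standard machinery of Schauder bases. Since $\{x_{n}\}$ is a basis for $X$, each $x\in X$ has a unique expansion $x=\sum_{n}a_{n}(x)x_{n}$, and the uniform boundedness principle applied to the partial sum operators $S_{N}x:=\sum_{n=1}^{N}a_{n}(x)x_{n}$ yields $K:=\sup_{N}\|S_{N}\|<\infty$, $S_{N}^{2}=S_{N}$, $S_{N}x\to x$ for every $x\in X$, and continuity of each $a_{n}\in X^{\ast}$. Taking adjoints and using $\langle x_{m},a_{n}\rangle=\delta_{nm}$, a direct computation shows $S_{N}^{\ast}\phi=\sum_{n=1}^{N}\phi(x_{n})\,a_{n}$ with $\|S_{N}^{\ast}\|\leq K$. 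For each fixed $\phi\in X^{\ast}$ and each $x\in X$, $(S_{N}^{\ast}\phi)(x)=\phi(S_{N}x)\to\phi(x)$, which at this stage only delivers weak-$\ast$ convergence $S_{N}^{\ast}\phi\to\phi$.

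The main obstacle is to upgrade this pointwise convergence to norm convergence, and this is exactly where reflexivity enters. Set $b_{N}:=\phi-S_{N}^{\ast}\phi$ and note that $b_{N}$ annihilates $\mathrm{span}\{x_{1},\dots,x_{N}\}$. Suppose for contradiction that $\|b_{N}\|\not\to 0$; then after passing to a subsequence there exist $\varepsilon>0$ and vectors $y_{N}\in X$ with $\|y_{N}\|\leq 1$ and $|b_{N}(y_{N})|\geq\varepsilon$. Let $z_{N}:=y_{N}-S_{N}y_{N}$. Because $S_{N}$ is a projection, $S_{N}z_{N}=0$ and therefore $b_{N}(z_{N})=\phi(z_{N})-\phi(S_{N}z_{N})=\phi(z_{N})$, while $\|z_{N}\|\leq 1+K$. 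Reflexivity of $X$ (via Banach--Alaoglu) provides a subsequence $z_{N_{k}}$ converging weakly to some $z\in X$. For each fixed $m$, one has $a_{m}(z_{N_{k}})=0$ whenever $N_{k}\geq m$; passing to the limit yields $a_{m}(z)=0$ for every $m$, so the uniqueness of basis expansions forces $z=0$. Consequently $\phi(z_{N_{k}})\to 0$, contradicting $|\phi(z_{N_{k}})|\geq\varepsilon$.

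Hence $S_{N}^{\ast}\phi\to\phi$ in $X^{\ast}$ for every $\phi$, which gives the convergent expansion $\phi=\sum_{n}\phi(x_{n})\,a_{n}$ and in particular completeness of $\{a_{n}\}$. Uniqueness is immediate: if $\phi=\sum_{n}c_{n}a_{n}$ were another expansion, evaluating at $x_{m}$ and using biorthogonality would force $c_{m}=\phi(x_{m})$. Therefore $\{a_{n}\}$ is a basis for $X^{\ast}$.
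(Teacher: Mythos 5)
The paper does not actually prove this statement: Theorem \ref{biort} is quoted from Heil \cite{9} as background material, so there is no in-paper argument to compare against. Your proof is correct and self-contained, but it takes a genuinely different route from the standard textbook one. The usual argument splits into two steps: first, the adjoint partial-sum projections $S_{N}^{\ast}$ show that $\{a_{n}\}$ is always a basic sequence in $X^{\ast}$ (a basis for its closed linear span, with no hypothesis on $X$); second, reflexivity enters only through Hahn--Banach --- any $F\in X^{\ast\ast}$ vanishing on every $a_{n}$ has the form $\hat{x}$ with $a_{n}(x)=0$ for all $n$, forcing $x=0$, so $\overline{\mathrm{span}}\{a_{n}\}=X^{\ast}$. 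You instead prove the norm convergence $S_{N}^{\ast}\phi\to\phi$ directly for every $\phi\in X^{\ast}$ by a compactness contradiction: the identity $b_{N}\circ S_{N}=0$, hence $b_{N}(z_{N})=b_{N}(y_{N})=\phi(z_{N})$ for $z_{N}=y_{N}-S_{N}y_{N}$, is the right trick and is carried out correctly, and the weak limit $z$ satisfies $a_{m}(z)=0$ for all $m$, so $z=0$. What the textbook route buys is the isolation of the general fact that coordinate functionals always form a basic sequence (informative even without reflexivity, e.g.\ for $\ell^{1}$, where they span only $c_{0}\subset\ell^{\infty}$); what your route buys is a single argument that never invokes Hahn--Banach. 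Two points you lean on without comment are standard but worth flagging: the boundedness of each individual $S_{N}$ (which itself requires the equivalent-norm/open-mapping argument before the uniform boundedness principle can be applied to the family), and the extraction of a weakly convergent subsequence from a bounded sequence, which via Banach--Alaoglu needs metrizability of the weak topology on bounded sets --- available here because $X$ has a basis and is therefore separable, and reflexive plus separable gives $X^{\ast}$ separable (alternatively, invoke Eberlein--\v{S}mulian). Neither is a gap.
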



\subsection{The Ingham's inequality}\label{s12}

Here we introduce the main tool to prove the controllability result for the linearized Benjamin equation, viz; the Ingham's inequality which is a generalization of Parseval's equality
due to Ingham \cite{8}. Further generalizations can be found in Komornik and Loreti  \cite{7} or in  Ball and Slemrod  \cite{Ball and Slemrod} and the references therein.

\begin{thm}[{\cite{8}}]\label{Ingham1}
	Let $\{\lambda_{k}\}_{k = -\infty}^{\infty}$ be a strictly increasing sequence of  real numbers, and
	$I$ a bounded interval. Consider the sums of the form $$f(t)=\sum_{k \in Z}c_{k}e^{i\lambda_{k}t},\;\;t\in I,$$
	with  square-summable complex coefficients $c_{k}.$
	Assume that there exists $\gamma>0$ such that the ``gap condition"
	$$\lambda_{n+1}-\lambda_{n} \geq \gamma, \;\;\forall\;n\in \mathbb{Z},$$
	holds, then there exist constants $A, \;B>0,$ such that for every bounded interval $I$
	of length  $|I|> \frac{2\pi}{\gamma},$
	$$A\sum_{k \in Z}|c_{k}|^{2}\leq \int_{I}|f(t)|^{2}dt\leq B\sum_{k \in Z}|c_{k}|^{2}.$$
\end{thm}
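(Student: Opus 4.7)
The plan is to prove both inequalities by the classical test-function method introduced by Ingham. By translating the time variable (which only multiplies each $c_{k}$ by a unimodular phase and hence leaves $\sum|c_{k}|^{2}$ invariant), we may assume that $I=(-T,T)$ with $2T>2\pi/\gamma$, i.e., $\gamma T>\pi$. The strategy is to select a nonnegative even function $K\in L^{1}(\mathbb{R})$ that compares suitably with $\mathbf{1}_{I}$, and whose Fourier transform $\widehat{K}(\mu)=\int K(t)e^{-i\mu t}dt$ has good sign/decay at the differences $\lambda_{k}-\lambda_{j}$. The central identity, proved first for finite sums and then extended by density, is
\[
\int_{\mathbb{R}}K(t)|f(t)|^{2}dt=\sum_{j,k}c_{j}\overline{c_{k}}\,\widehat{K}(\lambda_{k}-\lambda_{j}).
\]
Note that the gap condition implies $|\lambda_{k}-\lambda_{j}|\geq\gamma|k-j|$ for all $j,k$, which is what makes the tail sums on the right-hand side convergent.

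For the upper bound, one picks $K\geq\mathbf{1}_{I}$ with $\widehat{K}\in L^{1}$ and fast decay (for instance a smooth dominant of $\mathbf{1}_{I}$ with compactly supported Fourier transform, after a minor rescaling). Separating $j=k$ from $j\neq k$ and applying the Schur test together with the gap-induced separation $|\lambda_{k}-\lambda_{j}|\geq\gamma|k-j|$, one controls $\sup_{k}\sum_{j}|\widehat{K}(\lambda_{k}-\lambda_{j})|$ uniformly. This yields
\[
\int_{I}|f(t)|^{2}dt\leq\int_{\mathbb{R}}K(t)|f(t)|^{2}dt\leq B\sum_{k}|c_{k}|^{2}.
\]

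For the lower bound, which is the heart of the argument, I would use Ingham's original choice
\[
K(t)=\cos\!\bigl(\pi t/(2T)\bigr)\,\mathbf{1}_{[-T,T]}(t),
\]
whose Fourier transform is computed explicitly:
\[
\widehat{K}(\mu)=\frac{4\pi T\cos(\mu T)}{\pi^{2}-4\mu^{2}T^{2}},\qquad \widehat{K}(0)=\frac{4T}{\pi}.
\]
Since $K\leq 1$ on $I$ and vanishes outside, one has $\int_{I}|f|^{2}\geq\int K|f|^{2}$. Splitting the resulting double sum into the diagonal $\widehat{K}(0)\sum_{k}|c_{k}|^{2}$ and the off-diagonal part, I would bound the latter via $2|c_{j}\overline{c_{k}}|\leq|c_{j}|^{2}+|c_{k}|^{2}$ by
\[
\Bigl(\sup_{k}\sum_{j\neq k}|\widehat{K}(\lambda_{k}-\lambda_{j})|\Bigr)\sum_{k}|c_{k}|^{2}.
\]
Using $\gamma T>\pi$ one checks that $|\widehat{K}(\mu)|\leq C/\mu^{2}$ on $\{|\mu|\geq\gamma\}$ and, combined with $|\lambda_{k}-\lambda_{j}|\geq\gamma|k-j|$, that the supremum above is strictly smaller than $\widehat{K}(0)=4T/\pi$. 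This yields the lower bound $A\sum_{k}|c_{k}|^{2}\leq\int_{I}|f(t)|^{2}dt$ with $A>0$.

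The main obstacle is the sharp quantitative matching between the decay of $\widehat{K}$ and the spectral density encoded by $\gamma$: the cosine kernel is optimized so that the off-diagonal contribution crosses $\widehat{K}(0)$ exactly at the threshold $|I|=2\pi/\gamma$. Any slower decay of $\widehat{K}$ or any relaxation of the gap would destroy the strict inequality needed for $A>0$; conversely, the length condition is what forces the kernel support and thereby the sign pattern of $\widehat{K}$ at the spectral differences.
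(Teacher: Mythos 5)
The paper does not prove this theorem; it is quoted as a classical result from Ingham's 1936 paper \cite{8}, so there is no in-paper proof to compare against. Your sketch correctly reproduces Ingham's original argument --- the kernel identity $\int K|f|^{2}=\sum_{j,k}c_{j}\overline{c_{k}}\widehat{K}(\lambda_{k}-\lambda_{j})$, the cosine kernel with $\widehat{K}(0)=4T/\pi$ and the strict domination of the off-diagonal sum when $\gamma T>\pi$ for the lower bound, and a Fourier-compactly-supported majorant for the upper bound --- and I see no gaps beyond the routine finite-sum-to-$\ell^{2}$ limiting step you already flag.
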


The following result is a generalization of  Theorem \ref{Ingham1}, for details see Theorem 4.6 in {\cite[page 67]{7}}

\begin{thm}[{\cite[page 67]{7}}]\label{InghamG2}
	Let $\{\lambda_{k}\}_{k\in J}$ be a family of real numbers, satisfying the uniform gap condition
	$$\gamma=	\underset{k\neq n}{\underset{k,n \in J}{\inf}} |\lambda_{k}-\lambda_{n}|>0.$$
	Set 
	$$\gamma'=\underset{S\subset J}{\sup}\;
	\underset{k\neq n}{\underset{k,n \in J\backslash S}{\inf}}
	|\lambda_{k}-\lambda_{n}|>0,$$	
	where $S$ rums over the finite subsets of $J.$
	
	If $I$ is a bounded interval of length $|I|> \frac{2\pi}{\gamma'},$ then there exists positive constants $A$ and $B$ such that
	$$A\sum_{k \in J}|c_{k}|^{2}\leq \int_{I}|f(t)|^{2}dt\leq B\sum_{k \in J}|c_{k}|^{2},$$
	for all functions given by the sum  $f(t)=\sum\limits_{k \in J}c_{k}e^{i\lambda_{k}t}$ with  square-summable complex coefficients $c_{k}.$
\end{thm}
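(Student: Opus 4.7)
The plan is to reduce to the classical Ingham Theorem \ref{Ingham1} via an asymptotic splitting of the index set, and then reabsorb the finitely many ``exceptional'' exponents through the Riesz basis / biorthogonal machinery of Theorem \ref{BasRieszTheo}. By the very definition of $\gamma'$ as a supremum over finite exceptional sets, I may fix $\gamma''$ with $2\pi/|I| < \gamma'' < \gamma'$ together with a finite set $S \subset J$ so that
$$\inf_{\substack{k,n \in J \setminus S \\ k \neq n}}|\lambda_k - \lambda_n| \geq \gamma''.$$
Theorem \ref{Ingham1} applied to $\{\lambda_k\}_{k \in J \setminus S}$ on $I$ then yields constants $A_1, B_1 > 0$ and a two-sided estimate for the partial sum $f_1(t) := \sum_{k \in J \setminus S} c_k e^{i\lambda_k t}$, since $|I| > 2\pi/\gamma''$.

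The upper Bessel bound of the generalized inequality is straightforward: write $f = f_1 + f_2$ with $f_2 := \sum_{k \in S} c_k e^{i\lambda_k t}$, apply $(a+b)^2 \leq 2(a^2 + b^2)$, and combine the upper bound for $f_1$ with the crude Cauchy--Schwarz estimate $\|f_2\|_{L^2(I)}^2 \leq |S|\,|I| \sum_{k \in S}|c_k|^2$. The lower bound is the genuine obstacle, since $\|f\|_{L^2(I)}^2$ contains cross-terms between $f_1$ and $f_2$ not individually controlled by the separate Ingham estimates. My strategy is to invoke Theorem \ref{BasRieszTheo}(4): it suffices to exhibit a biorthogonal family $\{\varphi_n\}_{n \in J} \subset L^2(I)$ to $\{e^{i\lambda_n t}\}_{n \in J}$ that is a complete Bessel sequence. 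For $n \in J \setminus S$, perturb the biorthogonal functional coming from the Riesz basis structure of the first step by a finite-rank correction ensuring annihilation of the $|S|$ extra exponents. For $n \in S$, build $\varphi_n$ in the finite-dimensional span of $\{e^{i\lambda_k t}\}_{k \in S}$ augmented by a correction lying in the orthogonal complement (in $L^2(I)$) of the closed span of $\{e^{i\lambda_k t}\}_{k \in J \setminus S}$.

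The main obstacle is to verify that the resulting biorthogonal family is uniformly $L^2$-bounded, hence a Bessel sequence. This reduces to two ingredients: first, that the finite Gram matrix $\bigl( \int_I e^{i(\lambda_k - \lambda_n)t}\,dt \bigr)_{k,n \in S}$ is invertible, which follows from the linear independence of distinct complex exponentials on any interval of positive length (automatic since $\gamma > 0$ forces the $\lambda_k$ to be distinct); and second, that finite-rank perturbations preserve the Bessel bounds established for the residual family. Once the Bessel biorthogonal system is in hand, Theorem \ref{BasRieszTheo}(4) furnishes a Riesz basis of the closed span of $\{e^{i\lambda_n t}\}_{n \in J}$ in $L^2(I)$, which is precisely the claimed two-sided inequality with constants $A, B > 0$ depending on $\gamma$, $\gamma'$, and $|I|$. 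This is the line of argument carried out in detail by Komornik and Loreti in \cite{7}.
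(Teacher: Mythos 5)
The paper does not prove this statement at all: it is quoted verbatim from Komornik--Loreti \cite[Theorem 4.6]{7}, so the only comparison available is between your argument and the standard proof in that reference, which proceeds quite differently (by Haraux's induction, removing one exceptional exponent at a time via the averaged function $g(t)=f(t)-\tfrac{1}{2\epsilon}\int_{-\epsilon}^{\epsilon}e^{-i\lambda_j s}f(t+s)\,ds$, which annihilates the $j$-th term and multiplies the remaining coefficients by factors bounded below thanks to the uniform gap $\gamma>0$). Your reduction of the upper bound and your application of Theorem \ref{Ingham1} to $J\setminus S$ are fine, but the lower bound contains a genuine gap.

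The gap is this: to build the biorthogonal functionals $\varphi_n$ for $n\in S$ (and the finite-rank corrections $\eta_n$ for $n\in J\setminus S$), you need the projections $P\,e^{i\lambda_m t}$, $m\in S$, onto the orthogonal complement of $H_1:=\overline{\mathrm{span}}\{e^{i\lambda_k t}:k\in J\setminus S\}$ to be linearly independent, with the projected Gram matrix $\bigl(\langle P e^{i\lambda_m t},P e^{i\lambda_{m'}t}\rangle\bigr)_{m,m'\in S}$ invertible. This is equivalent to saying that no exceptional exponential lies in $H_1$ and, quantitatively, that the angle between $H_1$ and $\mathrm{span}\{e^{i\lambda_m t}:m\in S\}$ is positive --- which is essentially the lower bound you are trying to prove. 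The Gram matrix you actually verify to be invertible, $\bigl(\int_I e^{i(\lambda_k-\lambda_n)t}\,dt\bigr)_{k,n\in S}$, only shows that the finitely many exceptional exponentials are linearly independent among themselves; it says nothing about their distance to the infinite-dimensional closed span $H_1$, and linear independence of exponentials is useless here since an infinite system can fail to be minimal even when every finite subsystem is independent. As written, the argument is therefore circular at its crucial step. The standard way to close it (and what \cite{7} actually does) is the Haraux averaging trick above, applied $|S|$ times on a nested sequence of intervals, each strictly longer than $2\pi/\gamma''$; this is also the device that would give you the quantitative lower bound on the projected Gram matrix needed for the uniform Bessel estimate of your biorthogonal family.
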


\section{Well-posedness of the Linearized Benjamin equation}
\label{section4}
In this section we give some  properties of  the operator $G$ defined in \eqref{EQ1} and well-posedness results for the IVPs  \eqref{introduc} and  \eqref{introduc2}.

\subsection{\textbf{Properties of the operator}} We begin with  following property of $G$ which can be found in  \cite{1} (Remark 2.1) and \cite{Micu Ortega Rosier and Zhang} (Lemma 2.20).
\begin{prop}\label{Ghop}
Let $s\in \mathbb{R}.$ The operator $G:L^{2}\left([0,T]; H^{s}_{p}(\mathbb{T})\right)
\rightarrow L^{2}\left([0,T]; H^{s}_{p}(\mathbb{T})\right)$
is linear and bounded.
\end{prop}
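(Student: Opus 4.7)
The statement decomposes nicely: linearity of $G$ is obvious from the definition since integration against $g$ is a linear functional and multiplication by $g$ is a linear map, so only boundedness requires work. The plan is to split
\[
G(h)(x,t) = g(x)\,h(x,t) - c(t)\,g(x),\qquad c(t):=\int_{0}^{2\pi} g(y)\,h(y,t)\,dy,
\]
and estimate each piece in $H^{s}_{p}(\mathbb{T})$ for almost every $t\in[0,T]$, then integrate in $t$.

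First, since $g\in C^{\infty}_{p}(\mathbb{T})$, I would use the standard fact that multiplication by a smooth periodic function is a bounded operator on $H^{s}_{p}(\mathbb{T})$ for every $s\in\mathbb{R}$; call its norm $C_{1}(s,g)$. This is the only ``analytic'' input needed, and it follows quickly from the Fourier-series characterization of $H^{s}_{p}(\mathbb{T})$ in Remark~\ref{baseH1} (for $s\geq 0$) together with a duality argument (for $s<0$). Thus
\[
\|g\,h(\cdot,t)\|_{H^{s}_{p}(\mathbb{T})} \leq C_{1}(s,g)\,\|h(\cdot,t)\|_{H^{s}_{p}(\mathbb{T})}.
\]

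Second, I would control the scalar $c(t)$. For $s\geq 0$ the embedding $H^{s}_{p}(\mathbb{T})\hookrightarrow L^{2}_{p}(\mathbb{T})$ and Cauchy--Schwarz give
\[
|c(t)| \leq \|g\|_{L^{2}_{p}(\mathbb{T})}\,\|h(\cdot,t)\|_{L^{2}_{p}(\mathbb{T})} \leq \|g\|_{L^{2}_{p}(\mathbb{T})}\,\|h(\cdot,t)\|_{H^{s}_{p}(\mathbb{T})};
\]
for $s<0$ I would instead interpret $c(t)$ as the duality pairing between $g\in H^{-s}_{p}(\mathbb{T})$ and $h(\cdot,t)\in H^{s}_{p}(\mathbb{T})$, yielding $|c(t)|\leq \|g\|_{H^{-s}_{p}(\mathbb{T})}\|h(\cdot,t)\|_{H^{s}_{p}(\mathbb{T})}$. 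Combining with the trivial bound $\|c(t)g\|_{H^{s}_{p}(\mathbb{T})}=|c(t)|\,\|g\|_{H^{s}_{p}(\mathbb{T})}$ produces a pointwise-in-$t$ estimate
\[
\|G(h)(\cdot,t)\|_{H^{s}_{p}(\mathbb{T})} \leq C(s,g)\,\|h(\cdot,t)\|_{H^{s}_{p}(\mathbb{T})}.
\]

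Finally I would square this inequality and integrate on $[0,T]$ to conclude
\[
\|G(h)\|_{L^{2}([0,T];H^{s}_{p}(\mathbb{T}))} \leq C(s,g)\,\|h\|_{L^{2}([0,T];H^{s}_{p}(\mathbb{T}))}.
\]
There is no real obstacle here; the only mildly delicate point is the multiplier estimate $\|g\varphi\|_{H^{s}_{p}}\lesssim\|\varphi\|_{H^{s}_{p}}$ for negative $s$, which one handles either by duality or by direct convolution estimates on the Fourier side, using that $\widehat{g}(k)$ decays faster than any polynomial. Since the paper only uses $s\geq 0$, this subtlety can in any case be invoked from the literature as indicated in the statement.
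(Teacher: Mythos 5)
Your argument is correct. Note, however, that the paper does not actually prove Proposition~\ref{Ghop}: it simply cites Remark~2.1 of Linares--Ortega and Lemma~2.20 of Micu--Ortega--Rosier--Zhang, so there is no in-paper proof to compare against. Your write-up supplies exactly the standard verification those references rely on: the decomposition $G(h)=gh-c(t)g$, the smooth-multiplier bound $\|g\varphi\|_{H^{s}_{p}}\leq C_{1}(s,g)\|\varphi\|_{H^{s}_{p}}$ (which for general $s$ follows from the rapid decay of $\widehat{g}$ together with Peetre's inequality, or by duality from the case $s\geq 0$), the bound $|c(t)|\leq\|g\|_{H^{-s}_{p}}\|h(\cdot,t)\|_{H^{s}_{p}}$ valid for every $s\in\mathbb{R}$, and integration in $t$ (the pointwise application of a bounded operator preserves Bochner measurability, so the last step is legitimate). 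You correctly identify the only delicate point, namely the multiplier estimate for $s<0$, which is anyway not needed for the range $s\geq 0$ actually used in the paper. Nothing is missing.
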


\begin{prop}
The operator $G:L^{2}(\mathbb{T})\rightarrow L^{2}(\mathbb{T})$
is linear, bounded and self-adjoint.
\end{prop}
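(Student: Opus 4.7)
The plan is to verify the three claimed properties directly from the explicit formula
$$G(h)(x) = g(x)\Bigl[h(x) - \int_0^{2\pi} g(y) h(y)\,dy\Bigr].$$
Linearity is immediate from linearity of both pointwise multiplication by $g$ and integration against $g$, so no real work is needed there.

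For boundedness I would simply estimate using the triangle inequality and Cauchy--Schwarz. Writing $c(h)=\int_0^{2\pi} g(y)h(y)\,dy$, one has $|c(h)|\leq \|g\|_{L^2}\|h\|_{L^2}$, and since $g\in C^\infty_p(\mathbb{T})$ is bounded,
$$\|G(h)\|_{L^2} \leq \|g\|_{L^\infty}\|h\|_{L^2} + |c(h)|\,\|g\|_{L^2} \leq \bigl(\|g\|_{L^\infty}+\|g\|_{L^2}^2\bigr)\|h\|_{L^2}.$$
This is the only routine calculation in the proof.

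The main point, and the only step that requires a tiny bookkeeping argument, is self-adjointness. Because $g$ is real-valued, I would expand both inner products and compare term by term. Namely,
$$\langle G(h_1),h_2\rangle_{L^2} = \int_0^{2\pi} g(x)h_1(x)\overline{h_2(x)}\,dx - \Bigl(\int_0^{2\pi} g(y)h_1(y)\,dy\Bigr)\Bigl(\int_0^{2\pi} g(x)\overline{h_2(x)}\,dx\Bigr),$$
and similarly, using $\overline{g}=g$,
$$\langle h_1, G(h_2)\rangle_{L^2} = \int_0^{2\pi} g(x)h_1(x)\overline{h_2(x)}\,dx - \Bigl(\int_0^{2\pi} g(x)h_1(x)\,dx\Bigr)\Bigl(\int_0^{2\pi} g(y)\overline{h_2(y)}\,dy\Bigr).$$
The first terms match verbatim, and the second terms agree after a trivial relabeling of dummy variables, giving $G=G^\ast$.

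There is no real obstacle here; the statement is essentially a sanity check on the definition of $G$. The only subtle ingredient is the reality of $g$, which is built into the hypothesis \eqref{gcondition}, so the conjugation passes through cleanly and the ``mean-value subtraction'' term, which looks asymmetric at first sight, turns out to be symmetric in $(h_1,h_2)$.
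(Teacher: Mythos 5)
Your proof is correct and follows essentially the same route as the paper: the paper likewise treats linearity and boundedness as routine and establishes self-adjointness by expanding $(G(h),f)_{L^2}$, relabeling dummy variables, and using the reality of $g$ to recombine the terms into $(h,G(f))_{L^2}$. Your term-by-term comparison of the two expanded inner products is just a slight reorganization of the same computation.
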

\begin{proof}
	It is easy to see that $G\in \mathcal{L}(L^{2}(\mathbb{T})).$ Moreover, there is a constant $C_{g}$ depending only on $g$ (see \eqref{gcondition}) such that
	$$\displaystyle{\|G(\varphi)\|_{L^{2}(\mathbb{T})}\leq C_{g}\|\varphi\|_{L^{2}(\mathbb{T})}}.$$
	
	We show that
	$G$ is symmetric. Let $h\in L^{2}(\mathbb{T}),$ thus
\begin{align*}
		(G(h)\;,\;f)_{L^{2}(\mathbb{T})}
		 & =\int_{0}^{2\pi}g(x)h(x)\overline{f}(x)\;dx
		 -\int_{0}^{2\pi}g(x)\overline{f}(x)\left[
		\int_{0}^{2\pi}g(y)
		h(y)\;dy\right]\;dx.
		\\ & =\int_{0}^{2\pi}g(y)h(y)\overline{f}
		(y)\;dy
	 -\int_{0}^{2\pi}g(y)h(y)\left[
		\int_{0}^{2\pi}g(x)
		\overline{f}(x)\;dx\right]\;dy
		\\ & =\int_{0}^{2\pi}g(y)
		h(y)\left[\overline{f}(y)-\int_{0}^{2\pi}
		g(x)\overline{f}(x)\;dx\right]\;dy
		\\ &=\int_{0}^{2\pi}h(y)G\left(\overline{f}\right)(y)\;
		dy\\
		 & =(h\;,\;G(f))_{L^{2}(\mathbb{T})} .
\end{align*}
	
	This proves the proposition.
\end{proof}

\subsection{\textbf{Well-posedness}}
In this subsection, we  establish global well-posedness for the linear IVP \eqref{introduc} and well-posedness for the non homogeneous system \eqref{introduc2} with $f=G(h).$ 

\begin{prop}\label{OGU}
Let $\alpha>0$ be given. The operator $A:D(A)\subseteq L^{2}(\mathbb{T})\rightarrow L^{2}(\mathbb{T}),$
defined by \;  $A\varphi:=\alpha \mathcal{H}\partial_{x}^{2}\varphi+\partial_{x}^{3}\varphi,$\;
generates a strongly continuous unitary group $\{U(t)\}_{t\in \mathbb{R}}$
on $L^{2}(\mathbb{T}).$
\end{prop}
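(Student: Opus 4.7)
The plan is to exploit the fact that $A$ is diagonal in the Fourier basis $\{\psi_k\}_{k\in\mathbb{Z}}$ with purely imaginary symbol, and then either invoke Stone's theorem or simply construct $\{U(t)\}_{t\in\mathbb{R}}$ by hand.

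First I would compute the symbol of $A$. For $\varphi = \sum_{k\in\mathbb{Z}} \widehat{\varphi}(k)\, e^{ikx}$, combining the elementary identities $\widehat{\partial_x^2 \varphi}(k)=-k^2\widehat{\varphi}(k)$ and $\widehat{\partial_x^3\varphi}(k)=-ik^3\widehat\varphi(k)$ with the multiplier representation \eqref{HilbertFrequences} of $\mathcal{H}$ gives
\begin{equation*}
\widehat{A\varphi}(k) \;=\; i\lambda_k\, \widehat{\varphi}(k), \qquad \lambda_k := \alpha\, k|k| - k^3 \in \mathbb{R}.
\end{equation*}
So $A$ is a Fourier multiplier whose symbol is purely imaginary. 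The natural choice of domain is
$D(A)=\bigl\{\varphi\in L^2_p(\mathbb{T}):\sum_{k}\lambda_k^2|\widehat\varphi(k)|^2<\infty\bigr\} = H^3_p(\mathbb{T})$, which is dense in $L^2_p(\mathbb{T})$.

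Next I would define, for each $t\in\mathbb{R}$, the operator
\begin{equation*}
U(t)\varphi \;:=\; \sum_{k\in\mathbb{Z}} e^{it\lambda_k}\,\widehat\varphi(k)\,\psi_k,
\end{equation*}
and verify the three defining properties of a strongly continuous unitary group. Since $|e^{it\lambda_k}|=1$, Parseval's identity yields $\|U(t)\varphi\|_{L^2_p}=\|\varphi\|_{L^2_p}$, so each $U(t)$ is an isometry. The group law $U(t+s)=U(t)U(s)$ and $U(0)=I$ are immediate at the level of Fourier coefficients, and this law also shows $U(t)$ is surjective with $U(t)^{-1}=U(-t)$, so $U(t)$ is in fact unitary. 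Strong continuity at $t=0$ (which implies strong continuity everywhere by the group law) follows by dominated convergence: for each $k$, $e^{it\lambda_k}\widehat\varphi(k)\to\widehat\varphi(k)$ as $t\to0$, while $|e^{it\lambda_k}\widehat\varphi(k)|^2\leq|\widehat\varphi(k)|^2\in\ell^1$.

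Finally, I would identify the infinitesimal generator with $A$. For $\varphi\in D(A)=H^3_p(\mathbb{T})$ one has, via Parseval,
\begin{equation*}
\Bigl\|\tfrac{U(t)\varphi-\varphi}{t}-A\varphi\Bigr\|_{L^2_p}^{2} = 2\pi\sum_{k\in\mathbb{Z}}\Bigl|\tfrac{e^{it\lambda_k}-1}{t}-i\lambda_k\Bigr|^{2}|\widehat\varphi(k)|^{2}.
\end{equation*}
Each summand tends to $0$ as $t\to0$ and is dominated by $4\lambda_k^2|\widehat\varphi(k)|^2$, which is summable because $\varphi\in H^3_p(\mathbb{T})$; dominated convergence gives convergence to $0$, so the generator agrees with $A$ on $D(A)$. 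A short argument (for instance, using that the generator of any $C_0$-group is maximal skew-adjoint in the unitary case) shows these domains coincide, completing the proof. There is no real obstacle here: the only point that requires a little care is the dominated-convergence step and the check that the generator's domain is exactly $H^3_p(\mathbb{T})$, which is automatic from the growth $|\lambda_k|\asymp|k|^3$.
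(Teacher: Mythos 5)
Your proof is correct, but it takes a genuinely different route from the paper's. The paper proves this proposition by showing $A$ is skew-adjoint on $D(A)=H^3_p(\mathbb{T})$: it integrates by parts using the Hilbert-transform identities $\mathcal{H}\partial_x=\partial_x\mathcal{H}$ and \eqref{Hprop2} to get $(A\varphi,\psi)_{L^2}=-(\varphi,A\psi)_{L^2}$, and then invokes Stone-type generation (Theorem 3.2.3 of Cazenave--Haraux) to conclude. You instead diagonalize $A$ in the Fourier basis, observe the symbol $i\lambda_k$ with $\lambda_k=\alpha k|k|-k^3$ real, and build $U(t)$ explicitly as the multiplier $e^{it\lambda_k}$, checking unitarity, the group law, strong continuity, and the identification of the generator by dominated convergence. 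Your computation of the symbol is right (it matches the paper's formula \eqref{ODE1}), and the domination $\bigl|\tfrac{e^{it\lambda_k}-1}{t}-i\lambda_k\bigr|\le 2|\lambda_k|$ together with $|\lambda_k|\asymp|k|^3$ correctly forces $D=H^3_p(\mathbb{T})$. What each approach buys: the paper's argument is shorter given the cited generation theorem, but its assertion that the integration-by-parts identity makes $A$ ``skew-adjoint'' really only exhibits skew-symmetry, with the maximality left implicit; your Fourier construction makes skew-adjointness transparent (a diagonal operator with purely imaginary symbol on its maximal domain) and is essentially what the paper ends up doing anyway in \eqref{semi2} and Lemmas \ref{semig2}--\ref{semig3} to extend the group to every $H^s_p(\mathbb{T})$, so your proof subsumes that later step. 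The one place you are terse is the final claim that the generator's domain is exactly $H^3_p(\mathbb{T})$: the clean justification is that if $t^{-1}(U(t)\varphi-\varphi)$ converges in $L^2$ then Fatou's lemma applied to the Fourier coefficients gives $\sum_k\lambda_k^2|\widehat\varphi(k)|^2<\infty$, i.e.\ the reverse inclusion $D(\text{generator})\subseteq H^3_p(\mathbb{T})$; your appeal to maximality of skew-adjoint generators is an acceptable substitute.
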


\begin{proof}
Let $\varphi, \psi \in D(A)=H^{3}_{p}(\mathbb{T}). $
Using properties of the Hilbert transform we have
\begin{equation}\label{Hilbert1}
\mathcal{H}(\partial_{x}\varphi)(x)=\partial_{x}\mathcal{H}(\varphi)(x)\;\;\;\;\text{and}\;\;\;\;
\mathcal{H}(\partial_{x}^{2}\varphi)(x)=\partial_{x}^{2}\mathcal{H}(\varphi)(x),\;\forall x\in \mathbb{T},
\end{equation}
and
\begin{equation}\label{prop1}
( A\varphi\;,\; \psi)_{L^{2}(\mathbb{T})}= -\alpha \int_{0}^{2\pi}\partial_{x}^{2}\varphi(x) \overline{\mathcal{H}\psi(x)}\;dx+
\int_{0}^{2\pi}\partial_{x}^{3}\varphi(x)\overline{\psi(x)}\;dx.
\end{equation}

 Using integration by parts with respect to x in \eqref{prop1} we get
\begin{align*}
( A\varphi, \psi)_{L^{2}(\mathbb{T})}
&=-\int_{0}^{2\pi}\varphi(x)[\alpha\overline{\mathcal{H}\partial_{x}^{2}\psi(x)+\partial_{x}^{3}\psi(x)}]
\;dx\\
&=-(\varphi\;,\; A\psi)_{L^{2}(\mathbb{T})}.
\end{align*}
This implies that A is skew-adjoint and in particular  $( A\varphi\;,\; \varphi)_{L^{2}(\mathbb{T})}=0.$
Therefore, Theorem 3.2.3 in  Cazenave-Haraux \cite{3} implies that
the operator $A$ generates a strongly continuous unitary group of isometries (contractions) $\{U(t)\}_{t\in\mathbb{R}}$ on  $L^{2}(\mathbb{T}).$
\end{proof}

As a consequence of this proposition and Theorem 3.2.3 in  Cazenave-Haraux \cite{3} we have the following  global well-posedness  result for the IVP \eqref{introduc}  in $H_{p}^{3}(\mathbb{T})$.

\begin{cor}\label{EU}
Let $u_{0}\in H_{p}^{3}(\mathbb{T}),$ then there exists a unique solution
$$u\in C(\mathbb{R},H_{p}^{3}(\mathbb{T}))\cap C^{1}(\mathbb{R},L^{2}(\mathbb{T}))$$
for the homogeneous system \eqref{introduc}.
\end{cor}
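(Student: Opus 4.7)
The plan is to invoke the abstract semigroup theory directly. By Proposition \ref{OGU} the operator $A$ with domain $D(A)=H_{p}^{3}(\mathbb{T})$ is skew-adjoint on $L^{2}(\mathbb{T})$ and therefore generates a strongly continuous unitary group $\{U(t)\}_{t\in\mathbb{R}}$. The candidate solution is simply $u(t):=U(t)u_{0}$.

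First I would verify the regularity. Since $u_{0}\in D(A)$, a classical result for semigroups generated by (skew-)self-adjoint operators (precisely Theorem 3.2.3 in Cazenave--Haraux \cite{3}, already invoked in the proof of Proposition \ref{OGU}) asserts that $t\mapsto U(t)u_{0}$ belongs to $C(\mathbb{R},D(A))\cap C^{1}(\mathbb{R},L^{2}(\mathbb{T}))$ and satisfies $\frac{d}{dt}U(t)u_{0}=AU(t)u_{0}$ for all $t\in\mathbb{R}$. Translating this back via $D(A)=H_{p}^{3}(\mathbb{T})$ and the definition $A\varphi=\alpha\mathcal{H}\partial_{x}^{2}\varphi+\partial_{x}^{3}\varphi$, the function $u(t)=U(t)u_{0}$ is exactly a solution of \eqref{introduc} in the required function class, with $u(\cdot,0)=u_{0}$.

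For uniqueness I would run the standard energy argument allowed by skew-adjointness. If $v\in C(\mathbb{R},H_{p}^{3}(\mathbb{T}))\cap C^{1}(\mathbb{R},L^{2}(\mathbb{T}))$ is another solution with the same initial datum, then $w:=u-v$ lies in the same class, satisfies $\partial_{t}w=Aw$ in $L^{2}(\mathbb{T})$ and $w(\cdot,0)=0$. Using that $(A\varphi,\varphi)_{L^{2}(\mathbb{T})}=0$ for every $\varphi\in D(A)$ (established in the proof of Proposition \ref{OGU}), one gets
\[
\frac{d}{dt}\|w(\cdot,t)\|_{L^{2}(\mathbb{T})}^{2}=2\RE(Aw,w)_{L^{2}(\mathbb{T})}=0,
\]
so $\|w(\cdot,t)\|_{L^{2}(\mathbb{T})}\equiv 0$ and hence $u\equiv v$.

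There is no real obstacle here: both existence and uniqueness are immediate consequences of the unitary group structure produced in Proposition \ref{OGU}, and the only point worth being careful about is matching the two stated regularity spaces $C(\mathbb{R},H_{p}^{3}(\mathbb{T}))$ and $C^{1}(\mathbb{R},L^{2}(\mathbb{T}))$ with the conclusion of the Cazenave--Haraux theorem via the identification $D(A)=H_{p}^{3}(\mathbb{T})$.
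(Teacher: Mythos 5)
Your proposal is correct and follows essentially the same route as the paper: the paper obtains Corollary \ref{EU} precisely as a direct consequence of Proposition \ref{OGU} together with Theorem 3.2.3 of Cazenave--Haraux, which is exactly what you invoke. The only difference is that you spell out the regularity identification $D(A)=H_{p}^{3}(\mathbb{T})$ and add an explicit energy argument for uniqueness (which the cited theorem already supplies), neither of which changes the substance.
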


We can generalize the last Corollary to get solutions
of the system  \eqref{introduc} in $H_{p}^{s}(\mathbb{T})$ for all $s\in \mathbb{R}.$
This, can be stated in a formal way as following.

Taking Fourier's transform in the spatial variable, the IVP \eqref{introduc} is equivalent to the following ODE
\begin{equation}\label{ODE1}
\left \{
\begin{array}{l l}
    \partial_{t}\widehat{u}(k)=ik^{2}\left[\alpha\;\text{sgn}(k)-k\right]\widehat{u}(k),&  t\in\mathbb{R},\\
    \widehat{u}(k,0)=\widehat{u_{0}}(k), &
\end{array}
\right.
\end{equation}
for all $ k\in \mathbb{Z}.$ The unique solution of  \eqref{ODE1} is given by
\begin{equation}\label{prop2}
\widehat{u}(t)(k)=e^{ik^{2}[\alpha\;
\text{sgn}(k)-k]t}\widehat{u_{0}}(k),\;\;\forall k\in \mathbb{Z}.
\end{equation}

Taking inverse Fourier transform in \eqref{prop2},  we get
\begin{equation}\label{solf}
 u(t)=\left(e^{ik^{2}[\alpha\;\text{sgn}(k)-k]t}\widehat{u_{0}}(k)\right)^{\vee},\;\;\forall\;t\in \mathbb{R}.
\end{equation}

It means that,
\begin{equation}\label{solf2}
 u(x,t)=\sum_{k\in \mathbb{Z}}
e^{ik^{2}[\alpha\;\text{sgn}(k)-k]t}\widehat{u_{0}}(k)e^{ikx},\;\;\forall\;t\in \mathbb{R},
\end{equation}
is the unique solution for the IVP  \eqref{introduc}.

Now, in a rigorous way, define the family of operators 
\;$U:\mathbb{R}\rightarrow \mathcal{L}(H^{s}_{p}(\mathbb{T}))$ by
\begin{equation}\label{semi2}
\begin{split}
 t\rightarrow U(t)\varphi:=e^{(\alpha H\partial_{x}^{2}+\partial_{x}^{3})t}\varphi
 &=(e^{ik^{2}[\alpha\;\text{sgn}(k)-k]t}\widehat{\varphi}(k))^{\vee}.
\end{split}
\end{equation}

Note that, with this definition the relation \eqref{solf} becomes $u(t)=U(t)u_{0},\;t\in \mathbb{R},$ and we get the following lemmas, whose proof can be obtained 
 from classical results on the semigroup theory (see for eg. Cazenave and Haraux \cite{3},  Pazy \cite{4} or  Iorio   and Magalhes  \cite{6} for more details).

\begin{lem}\label{semig2}
Let $s\in\mathbb{R}.$ The family of operators $\{U(t)\}_{t\in\mathbb{R}}$ given by \eqref{semi2}
defines a strongly continuous one-parameter unitary  group of contractions on $H^{s}_{p}(\mathbb{T}).$ Furthermore, $U(t)$ is an isometry for all $t\in\mathbb{R}.$
\end{lem}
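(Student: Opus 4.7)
The plan is to read off every claim directly from the explicit Fourier-multiplier form \eqref{semi2}. Setting $\mu_k := k^{2}\left[\alpha\,\text{sgn}(k)-k\right]$, which is a \emph{real} number for every $k\in\mathbb{Z}$, the operator $U(t)$ is the Fourier multiplier by $e^{i\mu_k t}$. Since $|e^{i\mu_k t}|=1$ for all $k$ and $t$, I would first verify that
$$\|U(t)\varphi\|_{H^{s}_{p}(\mathbb{T})}^{2}=2\pi\sum_{k\in\mathbb{Z}}(1+|k|^{2})^{s}\left|e^{i\mu_k t}\right|^{2}|\widehat{\varphi}(k)|^{2}=\|\varphi\|_{H^{s}_{p}(\mathbb{T})}^{2},$$
so $U(t)$ is an isometry (hence a contraction) on $H^{s}_{p}(\mathbb{T})$ for every $s\in\mathbb{R}$ and every $t\in\mathbb{R}$. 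In particular $U(t)$ is a bounded linear operator on $H^{s}_{p}(\mathbb{T})$.

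Next I would establish the group law. Because the multiplier symbol factors as $e^{i\mu_k(t+\tau)}=e^{i\mu_k t}\,e^{i\mu_k \tau}$ pointwise in $k$, inverting the Fourier transform gives $U(t+\tau)=U(t)U(\tau)$ for all $t,\tau\in\mathbb{R}$, and $U(0)=I$ since $e^{0}=1$. Combining this with the isometry property shows that each $U(t)$ is surjective (with inverse $U(-t)$) and norm-preserving, hence unitary with respect to the $H^{s}_{p}(\mathbb{T})$ inner product.

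The one genuine analytic point is strong continuity, for which I would use the explicit Fourier expansion together with dominated convergence. For $\varphi\in H^{s}_{p}(\mathbb{T})$,
$$\|U(t)\varphi-\varphi\|_{H^{s}_{p}(\mathbb{T})}^{2}=2\pi\sum_{k\in\mathbb{Z}}(1+|k|^{2})^{s}\left|e^{i\mu_k t}-1\right|^{2}|\widehat{\varphi}(k)|^{2}.$$
Each summand tends to $0$ as $t\to 0$, and is dominated by the summable envelope $4(1+|k|^{2})^{s}|\widehat{\varphi}(k)|^{2}$; hence the sum vanishes in the limit, giving strong continuity at $t=0$. Strong continuity at an arbitrary $t_{0}\in\mathbb{R}$ then follows from the group identity $U(t)\varphi-U(t_{0})\varphi=U(t_{0})(U(t-t_{0})\varphi-\varphi)$ together with the isometry bound just proved.

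I do not foresee any real obstacle here: the proof is essentially a transcription of Proposition \ref{OGU} from $L^{2}(\mathbb{T})$ to $H^{s}_{p}(\mathbb{T})$, made possible by the fact that the group acts diagonally on the Fourier basis $\{\psi_k\}$ (cf.\ Remark \ref{baseH1}) with unimodular eigenvalues, so all weighted $\ell^{2}$-norms are preserved simultaneously. The only minor care needed is to justify termwise passage to the limit in the strong-continuity step, which the dominated convergence argument above handles uniformly in $s$.
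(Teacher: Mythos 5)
Your proposal is correct and follows essentially the same route as the paper: the isometry, group law, and unitarity are read off from the unimodular Fourier multiplier $e^{i\mu_k t}$, and strong continuity is obtained by dominating each term of the series by $4(1+|k|^{2})^{s}|\widehat{\varphi}(k)|^{2}$ (the paper phrases this via the Weierstrass M-test directly at an arbitrary $t_{0}$, whereas you work at $t=0$ and transfer by the group identity, which is an equivalent argument). The only difference is that the paper explicitly writes out just the strong-continuity step and delegates the remaining verifications to standard semigroup theory, while you spell them all out.
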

\begin{proof} Here we only show that
$\displaystyle{\lim_{t\rightarrow t_{0}}\left\|U(t)f-U(t_{0})f\right\|_{H_{p}^{s}(\mathbb{T})}=0,\;\forall\;t_{0}\in \mathbb{R},}$ and $f\in H_{p}^{s}(\mathbb{T}).$
In fact, assume $t_{0}\in \mathbb{R}$, then
\begin{align*}
	\left\|U(t)f-U(t_{0})f\right\|_{H_{p}^{s}(\mathbb{T})}^{2}
	&= 2\pi\sum_{k\in \mathbb{Z}}(1+|k|^{2})^{s}\left|(e^{ik^{2}[\alpha\;\text{sgn}(k)-k]t}-e^{ik^{2}[\alpha\;\text{sgn}(k)-k]t_{0}}
	)\widehat{f}(k)\right|^{2}.
\end{align*}
	
	Note that $(1+|k|^{2})^{s}\left|(e^{ik^{2}[\alpha\;\text{sgn}(k)-k]t}-e^{ik^{2}[\alpha\;\text{sgn}(k)-k]t_{0}}
	)\widehat{f}(k)\right|^{2}\leq 4(1+|k|^{2})^{s}|\widehat{f}(k)|^{2},$
	and $$\sum_{k\in\mathbb{Z}}4(1+|k|^{2})^{s}|\widehat{f}(k)|^{2}\leq 4\|f\|_{H_{p}^{s}(\mathbb{T})}<\infty.$$
	Thus, a direct application of Weierstrass's M-test implies that the series
	$$\sum_{k\in \mathbb{Z}}(1+|k|^{2})^{s}\left|\left(e^{ik^{2}[\alpha\;Sgn(k)-k]t}-e^{ik^{2}[\alpha\;Sgn(k)-k]t_{0}}
	\right)\widehat{f}(k)\right|^{2}$$
	converges absolutely and uniformly with respect to $t.$ Therefore,
	$$\|U(t)f-U(t_{0})f\|_{H_{p}^{s}(\mathbb{T})}^{2}\rightarrow 0,\;\;\text{as}\;t\rightarrow t_{0}.$$
\end{proof}

\begin{lem}\label{semig3}
Assume $s\in\mathbb{R}.$ If $u(t)=U(t)u_{0},$ then
$$\lim_{h\rightarrow 0}\left\|\frac{u(t+h)-u(t)}{h}-[\alpha H\partial_{x}^{2}
+\partial_{x}^{3}]u\right\|_{H_{p}^{s-3}(\mathbb{T})}=0,$$
uniformly with respect $t\in\mathbb{R}.$
\end{lem}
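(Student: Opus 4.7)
The plan is to transform to Fourier space, where the group $U(t)$ acts as multiplication by the explicit symbol $e^{i\mu_k t}$ with $\mu_k := k^2[\alpha\,\mathrm{sgn}(k)-k]$, and then apply a dominated convergence / Weierstrass M-test argument. The key observation that secures uniformity in $t$ will be that, because $|e^{i\mu_k t}|=1$, the relevant Fourier coefficients separate the $t$-dependence as a unit factor.

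More precisely, from \eqref{solf2} we have $\widehat{u}(k,t)=e^{i\mu_k t}\widehat{u_0}(k)$ and $\widehat{[\alpha\mathcal{H}\partial_x^2+\partial_x^3]u}(k,t)=i\mu_k e^{i\mu_k t}\widehat{u_0}(k)$, so by Plancherel
\begin{equation*}
\left\|\frac{u(t+h)-u(t)}{h}-Au(t)\right\|_{H^{s-3}_p(\mathbb{T})}^{2}
=2\pi\sum_{k\in\mathbb{Z}}(1+|k|^{2})^{s-3}\,\Bigl|e^{i\mu_k t}\Bigr|^{2}\left|\frac{e^{i\mu_k h}-1}{h}-i\mu_k\right|^{2}|\widehat{u_0}(k)|^{2}.
\end{equation*}
Since $|e^{i\mu_k t}|=1$, the right-hand side does not depend on $t$, so the limit $h\to 0$ will automatically be uniform in $t\in\mathbb{R}$ once it is established.

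Next I would apply the elementary inequality $|e^{i\xi}-1-i\xi|\leq\min\bigl(\xi^{2}/2,\,2|\xi|\bigr)$ with $\xi=\mu_k h$. The first bound yields pointwise convergence to $0$ for every fixed $k$:
\begin{equation*}
\left|\frac{e^{i\mu_k h}-1}{h}-i\mu_k\right|\leq\frac{\mu_k^{2}|h|}{2}\longrightarrow 0 \quad\text{as } h\to 0.
\end{equation*}
The second bound furnishes a domination independent of $h$:
\begin{equation*}
(1+|k|^{2})^{s-3}\left|\frac{e^{i\mu_k h}-1}{h}-i\mu_k\right|^{2}|\widehat{u_0}(k)|^{2}\leq 4(1+|k|^{2})^{s-3}\,\mu_k^{2}\,|\widehat{u_0}(k)|^{2}.
\end{equation*}
Since $|\mu_k|\leq|k|^{2}(|k|+\alpha)$, one has $\mu_k^{2}\leq C_\alpha(1+|k|^{2})^{3}$, so the dominating summand is at most $4C_\alpha(1+|k|^{2})^{s}|\widehat{u_0}(k)|^{2}$, which is summable because $u_0\in H^{s}_p(\mathbb{T})$.

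The conclusion then follows by the Weierstrass M-test (or the discrete dominated convergence theorem): the series converges to $0$ as $h\to 0$, and since the bound was uniform in $t$, so is the limit. The only mildly delicate point is the choice of the two complementary bounds on $|e^{i\xi}-1-i\xi|$ so as to get both pointwise decay and a summable envelope simultaneously; no genuine obstacle arises, the result being a standard consequence of the Fourier-multiplier description of the group $\{U(t)\}_{t\in\mathbb{R}}$ established in Lemma \ref{semig2}.
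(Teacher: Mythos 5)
Your proof is correct. The paper does not actually write out a proof of this lemma---it simply refers the reader to classical semigroup theory (Cazenave--Haraux, Pazy, Iorio--Magalh\~aes)---but your argument is precisely the Fourier-side computation that the paper itself uses to prove the companion Lemma \ref{semig2}: pass to the symbol $e^{i\mu_k t}$, bound each summand by a term independent of the limiting parameter, and conclude by the Weierstrass M-test. The one point worth highlighting is that your observation $|e^{i\mu_k t}|=1$ is what delivers the uniformity in $t$, which is the part of the statement not automatic from generic $C_0$-semigroup theory (abstractly it follows from the facts that $U(t)$ is an isometry commuting with $A$, so that the difference quotient error at time $t$ equals the one at time $0$); your two complementary bounds $|e^{i\xi}-1-i\xi|\leq\min(\xi^{2}/2,\,2|\xi|)$ and the estimate $\mu_k^{2}\leq C_\alpha(1+|k|^{2})^{3}$ are all accurate, so no gap remains.
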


Next theorem is a direct consequence of  Lemmas \ref{semig2} and  \ref{semig3}.

\begin{thm}\label{EU1}
	Let $s\in\mathbb{R}$ and $u_{0}\in H_{p}^{s}(\mathbb{T}),$ then there exists a unique solution
	$u\in C(\mathbb{R},H_{p}^{s}(\mathbb{T}))$
	for the homogeneous IVP \eqref{introduc}.
\end{thm}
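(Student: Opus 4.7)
The proof is essentially a packaging of the two preceding lemmas together with a uniqueness argument done on the Fourier side. The plan is as follows.

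For existence, I would simply set $u(t) := U(t)u_0$ with $U(t)$ the group defined in \eqref{semi2}. Lemma \ref{semig2} immediately gives that $U(\cdot)u_0 \in C(\mathbb{R}, H^s_p(\mathbb{T}))$ (strong continuity plus the fact that $U(t)$ is an isometry on $H^s_p(\mathbb{T})$), and Lemma \ref{semig3} ensures that the difference quotient $h^{-1}(u(t+h)-u(t))$ converges to $(\alpha \mathcal{H}\partial_x^2 + \partial_x^3)u(t)$ in $H^{s-3}_p(\mathbb{T})$ uniformly in $t$. In particular $\partial_t u \in C(\mathbb{R}, H^{s-3}_p(\mathbb{T}))$ and the equation in \eqref{introduc} holds in $H^{s-3}_p(\mathbb{T})$ for every $t$; the initial condition $u(0)=u_0$ holds because $U(0)=I$. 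Hence $u = U(\cdot)u_0$ is a solution in the required class.

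For uniqueness, suppose $u_1, u_2 \in C(\mathbb{R}, H^s_p(\mathbb{T}))$ are two solutions of \eqref{introduc} with the same initial data, and let $v := u_1 - u_2$. Taking the spatial Fourier coefficients, the equation $\partial_t v = \alpha \mathcal{H}\partial_x^2 v + \partial_x^3 v$ becomes, by \eqref{HilbertFrequences}, the scalar ODE
\begin{equation*}
\partial_t \widehat{v}(k,t) = ik^2\bigl[\alpha\,\mathrm{sgn}(k) - k\bigr]\widehat{v}(k,t), \qquad \widehat{v}(k,0)=0,
\end{equation*}
for each $k \in \mathbb{Z}$; this equation is meaningful because $t\mapsto \widehat{v}(k,t)$ is continuous (the evaluation at the $k$-th Fourier mode is a bounded functional on $H^{s-3}_p(\mathbb{T})$) and is absolutely continuous in view of the differentiability supplied by Lemma \ref{semig3}. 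Standard uniqueness for linear scalar ODEs forces $\widehat{v}(k,t)\equiv 0$ for every $k$, and thus $v \equiv 0$ in $\mathcal{D}'(\mathbb{T})$, whence $u_1 = u_2$ in $H^s_p(\mathbb{T})$.

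There is no genuine obstacle here: the only mildly delicate point is making sense of the equation $\partial_t u = Au$ when $s < 3$, which is exactly what Lemma \ref{semig3} is designed to do (the derivative is interpreted in $H^{s-3}_p(\mathbb{T})$). Once that interpretation is fixed, both existence via $U(t)$ and mode-by-mode uniqueness on the Fourier side are routine, and the theorem follows.
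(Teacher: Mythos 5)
Your proposal is correct and follows essentially the same route as the paper, which simply states that Theorem \ref{EU1} is a direct consequence of Lemmas \ref{semig2} and \ref{semig3} (existence via $u(t)=U(t)u_{0}$, with the equation interpreted in $H_{p}^{s-3}(\mathbb{T})$). The only addition is your explicit mode-by-mode uniqueness argument on the Fourier side, which the paper leaves implicit and which is a sound and welcome elaboration.
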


In the following, we are going to deal with the well-posedness of the non-homogeneous system \eqref{introduc2} with $f=G(h)$ associated to the linearized Benjamin equation.
\begin{lem}\label{EUNH}
Let $0\leq T<\infty,$ $s\geq 0,$  $u_{0}\in H_{p}^{s}(\mathbb{T}),$ and $h\in L^{2}([0,T];H_{p}^{s}(\mathbb{T})).$
Then, there exists a unique mild solution
$u\in C([0,T],H_{p}^{s}(\mathbb{T})) $ for the IVP \eqref{introduc2} with $f=G(h).$
\end{lem}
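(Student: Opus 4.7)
The plan is to construct the mild solution via Duhamel's formula and then verify the required continuity, using the unitary group $\{U(t)\}_{t\in\mathbb{R}}$ from Lemma \ref{semig2} together with the boundedness of $G$ from Proposition \ref{Ghop}. Specifically, I will define
$$u(t) := U(t)u_{0} + \int_{0}^{t}U(t-\tau)G(h)(\tau)\,d\tau, \qquad t\in[0,T],$$
and show that this is the unique mild solution in $C([0,T];H_{p}^{s}(\mathbb{T}))$.

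First I would check that the formula makes sense as a Bochner integral. By Proposition \ref{Ghop}, $G(h)\in L^{2}([0,T];H_{p}^{s}(\mathbb{T}))$ with $\|G(h)\|_{L^{2}([0,T];H_{p}^{s})}\leq C_{g}\|h\|_{L^{2}([0,T];H_{p}^{s})}$. Since $U(t-\tau)$ is an isometry on $H_{p}^{s}(\mathbb{T})$ (Lemma \ref{semig2}), the integrand $\tau\mapsto U(t-\tau)G(h)(\tau)$ is strongly measurable and satisfies $\|U(t-\tau)G(h)(\tau)\|_{H_{p}^{s}}=\|G(h)(\tau)\|_{H_{p}^{s}}$, which is in $L^{2}(0,T)\subset L^{1}(0,T)$. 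Hence the Bochner integral exists in $H_{p}^{s}(\mathbb{T})$, and by Cauchy--Schwarz
$$\Big\|\int_{0}^{t}U(t-\tau)G(h)(\tau)\,d\tau\Big\|_{H_{p}^{s}}\leq T^{1/2}\,\|G(h)\|_{L^{2}([0,T];H_{p}^{s})}.$$

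Next I would verify continuity in $t$. The map $t\mapsto U(t)u_{0}$ is continuous on $H_{p}^{s}(\mathbb{T})$ by Lemma \ref{semig2}. For the Duhamel term, take $0\leq t_{1}<t_{2}\leq T$ and split
$$\int_{0}^{t_{2}}U(t_{2}-\tau)G(h)(\tau)\,d\tau-\int_{0}^{t_{1}}U(t_{1}-\tau)G(h)(\tau)\,d\tau$$
into the tail $\int_{t_{1}}^{t_{2}}U(t_{2}-\tau)G(h)(\tau)\,d\tau$ and the correction $\int_{0}^{t_{1}}\bigl[U(t_{2}-\tau)-U(t_{1}-\tau)\bigr]G(h)(\tau)\,d\tau$. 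The tail has $H_{p}^{s}$-norm bounded by $(t_{2}-t_{1})^{1/2}\|G(h)\|_{L^{2}([0,T];H_{p}^{s})}\to 0$. For the correction, strong continuity of the group together with $\|[U(t_{2}-\tau)-U(t_{1}-\tau)]G(h)(\tau)\|_{H_{p}^{s}}\leq 2\|G(h)(\tau)\|_{H_{p}^{s}}$ allows the dominated convergence theorem to push $t_{2}\to t_{1}$ through the integral. This yields $u\in C([0,T];H_{p}^{s}(\mathbb{T}))$.

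Finally, uniqueness is essentially automatic from the mild formulation: if $u_{1},u_{2}\in C([0,T];H_{p}^{s}(\mathbb{T}))$ both satisfy the Duhamel identity with the same data $u_{0}$ and forcing $G(h)$, then $u_{1}-u_{2}\equiv 0$. I do not foresee a serious obstacle here; the argument is essentially the standard semigroup well-posedness proof for an inhomogeneous linear evolution equation, made possible because $U(t)$ is a unitary group on every $H_{p}^{s}(\mathbb{T})$ and $G$ is bounded on $L^{2}([0,T];H_{p}^{s}(\mathbb{T}))$. The only mild subtlety is ensuring that the Bochner-integrability and dominated-convergence bounds really use only the $L^{2}$ (not $L^{\infty}$) regularity of $h$ in time, which is handled by the Cauchy--Schwarz estimate above.
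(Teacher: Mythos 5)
Your proposal is correct and follows essentially the same route as the paper: the paper also notes that $G(h)\in L^{2}([0,T];H_{p}^{s}(\mathbb{T}))\subset L^{1}([0,T];H_{p}^{s}(\mathbb{T}))$ by Proposition \ref{Ghop} and then writes down the same Duhamel formula, citing Corollary 2.2 and Definition 2.3 of Pazy \cite{4} for existence, uniqueness and continuity of the mild solution. You simply carry out by hand (Bochner integrability, the tail-plus-correction splitting with dominated convergence, and the trivial uniqueness from the integral formulation) the standard semigroup facts that the paper outsources to the reference.
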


\begin{proof}
For $h\in L^{2}([0,T];H_{p}^{s}(\mathbb{T})),$  the Proposition \ref{Ghop} implies $G(h)\in L^{2}([0,T];H_{p}^{s}(\mathbb{T})).$
Thus, $G(h)\in L^{1}([0,T];H_{p}^{s}(\mathbb{T})).$ We
rewrite the IVP \eqref{introduc2} with $f=G(h)$ in its equivalent form,
$$
\left \{
\begin{array}{l l}
    u\in C([0,T],H^{s}_{p}(\mathbb{T}))& \\
    \partial_{t}u=\alpha H\partial^{2}_{x}u+\partial^{3}_{x}u+Gh(t)\;\in H_{p}^{s-3}(\mathbb{T}),&  t\in(0,T)\\
    u(0)=u_{0}, &
\end{array}
\right.$$
where the initial data $u_{0}\in H^{s}_{p}(\mathbb{T}).$
From 
Corollary 2.2 and Definition 2.3
in Pazy \cite{4}, we have that
$$u(t)=U(t)u_{0}+\int_{0}^{t}U(t-t')Gh(t')dt'$$
is the unique solution of \eqref{introduc2} with $f=G(h)$ for $s\geq 0,\;0\leq t \leq T<\infty.$
\end{proof}

\section{Control of the linear Benjamin equation}
\label{section5}
In this section we prove an exact controllability result for the system \eqref{introduc2} with $f=G(h)$
 using the classical moment method, see \cite{Russell}.
Without loss of generality, one can consider  $u_{0}=0.$
In fact,  for given \;$u_{0},$ $u_{1}$ $\in H^{s}_{p}(\mathbb{T})$
with $[u_{0}]=[u_{1}],$ if $h$ is the control which leads
the solution $v$ of system \eqref{introduc2} with $f=G(h)$ from initial data $v_{0}=0$ to the final state $u_{1}-U(T)u_{0},$ then
$v$ can be written as,
$$v(t)=\int_{0}^{t}U(t-s)Gh(s)ds.$$

So 
$$ u_{1}-U(T)u_{0}=v(T)=\int_{0}^{T}U(T-s)Gh(s)ds.$$

Therefore, $$u_{1}=U(T)u_{0}+\int_{0}^{T}U(T-s)Gh(s)ds=u(T),$$
where $u$ is the solution of system \eqref{introduc2}  with $f=G(h)$ and initial data $u_{0}.$ It means that, the control $h$ leads the solution $u$ of system \eqref{introduc2} with $f=G(h)$ from the initial state $u_{0}$ to the final state $u_{1}.$

From this point onward we assume $u_{0}=0,$ so that $[u_{1}]=[u_{0}]=0.$ In consequence, we have $c_{0}=0$ whenever we write $u_{1}(x)=\sum\limits_{k\in \mathbb{Z}}c_{k}\;\psi_{k}(x)\in H^{s}_{p}(\mathbb{T}),$ with $s\geq0,$ and $\psi_{k}$ as  in 
\eqref{spi}.
The next result is fundamental to get control for the linear system \eqref{introduc2} with $f=G(h).$

\begin{lem}\label{caractc}
Let $ s\geq 0,$ and $T>0$ be given. Assume $u_{1}\in H^{s}_{p}(\mathbb{T})$ with $[u_{1}]=0.$
Then, there exists
$h\in L^{2}([0,T],H^{s}_{p}(\mathbb{T})),$ such that the solution of the IVP \eqref{introduc2} with $f=G(h)$ and initial data $u_{0}=0$ satisfies $u(T)=u_{1}$ if and only if
\begin{equation}\label{CEQ}
    \int_{0}^{T}\left\langle Gh(\cdot,t),\varphi(\cdot,t)\right\rangle_{H^{s}_{p}\times(H^{s}_{p})'}dt
  =\left\langle u_{1},\varphi_{0}\right\rangle_{H^{s}_{p}\times(H^{s}_{p})'},
\end{equation}
for any $\varphi_{0}\in (H^{s}_{p}(\mathbb{T}))'$, where $(H^{s}_{p}(\mathbb{T}))'$ is the dual space of $H^{s}_{p}(\mathbb{T}),$ and $\varphi$
is the solution of the adjoint system 
\begin{equation}\label{adsis}
\left \{
\begin{array}{l l}
    \partial_{t}\varphi-\alpha H\partial^{2}_{x}\varphi-\partial^{3}_{x}\varphi=0,&  t>0,\;\;x\in \mathbb{T}\\
    \varphi(x,T)=\varphi_{0}(x), & x\in \mathbb{T}.
\end{array}
\right.
\end{equation}
\end{lem}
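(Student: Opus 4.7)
The proof is a standard Hilbert Uniqueness Method / moment-method duality argument, made available here because Proposition \ref{OGU} tells us that the generator $A\varphi=\alpha\mathcal{H}\partial_x^2\varphi+\partial_x^3\varphi$ is skew-adjoint; the adjoint system \eqref{adsis} therefore has exactly the same symbol as the direct system, it is only run backwards in time.

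My plan is to use the mild solution formulas. By Lemma \ref{EUNH}, since $u_0=0$, the solution of \eqref{introduc2} with $f=Gh$ satisfies
$$u(T)=\int_{0}^{T}U(T-t)\,Gh(t)\,dt.$$
On the other hand, $\{U(t)\}_{t\in\mathbb{R}}$ being a unitary group on $H^{s}_{p}(\mathbb{T})$ (Lemma \ref{semig2}) means that $U(t)^{-1}=U(-t)=U(t)^{*}$, where the adjoint is taken with respect to the natural $H_{p}^{s}\times (H_{p}^{s})'$ duality extending the $L^{2}$ inner product. Because the PDE in \eqref{adsis} is $\partial_t\varphi=A\varphi$ with terminal datum $\varphi(T)=\varphi_0\in (H_{p}^{s})'$, its unique solution is
$$\varphi(t)=U(t-T)\varphi_0=U(T-t)^{*}\varphi_0,\qquad t\in[0,T].$$

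For the ``only if'' implication I would assume $u(T)=u_1$, pair both sides of the mild formula with $\varphi_0$, and interchange the Bochner integral with the duality pairing:
$$\langle u_1,\varphi_0\rangle_{H^{s}_{p}\times (H^{s}_{p})'}=\int_{0}^{T}\langle U(T-t)Gh(t),\varphi_0\rangle\,dt=\int_{0}^{T}\langle Gh(t),U(T-t)^{*}\varphi_0\rangle\,dt=\int_{0}^{T}\langle Gh(t),\varphi(t)\rangle\,dt,$$
which is precisely \eqref{CEQ}. For the ``if'' implication, take any $h\in L^{2}([0,T];H_{p}^{s}(\mathbb{T}))$ and let $u$ be the associated mild solution; the identical computation yields $\langle u(T),\varphi_0\rangle=\int_0^T\langle Gh(t),\varphi(t)\rangle\,dt$. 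If the moment identity \eqref{CEQ} holds then $\langle u(T)-u_1,\varphi_0\rangle=0$ for every $\varphi_0\in (H_{p}^{s})'$, and by Hahn--Banach this forces $u(T)=u_1$ in $H_{p}^{s}(\mathbb{T})$.

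The main technical obstacle I foresee is purely bookkeeping: justifying the swap of the integral and the duality bracket when $s>0$, and checking that $U(T-t)^{*}$ really is the operator that sends $(H_{p}^{s})'$ back to $(H_{p}^{s})'$ in a way compatible with the Fourier series representation \eqref{semi2}. Both are routine once one observes that $U(t)$ acts as the Fourier multiplier $e^{ik^{2}[\alpha\,\mathrm{sgn}(k)-k]t}$, which is unimodular and hence extends as an isometry on every $H_{p}^{\sigma}(\mathbb{T})$, $\sigma\in\mathbb{R}$, with adjoint the complex-conjugate multiplier; the interchange of integration and pairing then follows from Bochner integrability of $t\mapsto U(T-t)Gh(t)$ (Proposition \ref{Ghop} plus the isometry property) and the continuity of $\varphi_0\in (H_{p}^{s})'$.
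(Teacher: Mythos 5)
Your proof is correct. It reaches the same duality identity as the paper but by a different mechanism: the paper multiplies the equation in \eqref{introduc2} by $\overline{\varphi}$, integrates by parts in $x$ and $t$ using the Hilbert transform properties of Proposition \ref{ptf} to obtain $\int_{0}^{T}\int_{0}^{2\pi}Gh\,\overline{\varphi}\,dx\,dt=\int_{0}^{2\pi}u(T)\overline{\varphi}(T)\,dx$ for smooth data, and then closes the argument with a density step; you instead start from the Duhamel representation $u(T)=\int_{0}^{T}U(T-t)Gh(t)\,dt$ of Lemma \ref{EUNH}, write $\varphi(t)=U(t-T)\varphi_{0}=U(T-t)^{\ast}\varphi_{0}$, and move $U(T-t)$ across the duality pairing. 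The two computations encode the same skew-adjointness of $A$; your version buys a cleaner treatment of low-regularity data, since it operates directly on mild solutions and replaces the smooth-data-plus-density bookkeeping by the (routine) facts that a bounded functional commutes with the Bochner integral and that the unimodular Fourier multiplier $U(\tau)$ extends isometrically to every $H_{p}^{\sigma}(\mathbb{T})$ with adjoint $U(-\tau)$. The paper's version has the advantage of exhibiting explicitly why \eqref{adsis} is the adjoint system (the integration by parts is where the sign flips appear), which your argument takes as given. Your concluding step is also fine: $\langle u(T)-u_{1},\varphi_{0}\rangle=0$ for all $\varphi_{0}\in(H_{p}^{s})'$ forces $u(T)=u_{1}$ because the dual separates points. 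No gap.
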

\begin{proof}
$(\Rightarrow)$ Let $\varphi_{0}$ and $h$ be smooth functions and $\varphi$ be the solution of the adjoint system
\eqref{adsis} with final data $\varphi_{0}.$ Multiplying the equation in \eqref{introduc2} by $\overline{\varphi},$   integrating by parts, and using the Hilbert transform proprieties in Proposition \ref{ptf}, we obtain
\begin{equation}\label{c2}
\small{
\begin{split}
\int_{0}^{T}\int_{0}^{2\pi} Gh\; \overline{\varphi}\;dx\;dt 
 &=\int_{0}^{2\pi}u(T)\;\overline{\varphi}(T)\;dx
 -\int_{0}^{T}\int_{0}^{2\pi} u\;\left[\partial_{t}\overline{\varphi}- \alpha\partial^{2}_{x}\mathcal{H}\overline{\varphi}-
 \partial^{3}_{x}\overline{\varphi}\right]\;dx\;dt\\
 &=\int_{0}^{2\pi} u(T)\;\overline{\varphi}(T)\;dx.
\end{split}}
\end{equation}

Therefore,
\begin{align*}
 \int_{0}^{T}\int_{0}^{2\pi} Gh\; \overline{\varphi}\;dx\;dt&=\int_{0}^{2\pi} u_{1}\;\overline{\varphi_{0}}\;dx.
\end{align*}
Now, identifying $L^{2}(\mathbb{T})$ with its dual (see  \cite[page 254]{Zeidler}) by means of the (conjugate linear) map
$y \rightarrow (\cdot,y)_{L^{2}(\mathbb{T})},$  we have the following inclusion
 $$H_{p}^{s}(\mathbb{T})\hookrightarrow L^{2}(\mathbb{T})
  \equiv (L^{2}(\mathbb{T}))'\hookrightarrow (H_{p}^{s}(\mathbb{T}))',$$
where the embedding is dense and continuous. Moreover,
$\left\langle \phi, \varphi\right\rangle_{H^{s}_{p}\times(H^{s}_{p})'}=
( \phi, \varphi)_{L^{2}(\mathbb{T})},$ for all $\phi,$ $\varphi\in L^{2}(\mathbb{T}).$ Thus,
\begin{align*}
\int_{0}^{T}\left\langle Gh(\cdot,t), \varphi(\cdot,t)\right\rangle_{H^{s}_{p}\times(H^{s}_{p})'}\;dt
&=\int_{0}^{T}( Gh(\cdot,t), \varphi(\cdot,t))_{L^{2}(\mathbb{T})}\;dt\\
&=\int_{0}^{2\pi} u_{1}\;\overline{\varphi_{0}}\;dx\\
&=\left\langle u_{1},\; \varphi_{0}\right\rangle_{H^{s}_{p}\times(H^{s}_{p})'}.
\end{align*}
$(\Leftarrow)$ Let $h$ be a smooth function such that \eqref{CEQ} holds for any smooth $\varphi_{0}\in (H^{s}_{p}(\mathbb{T}))'$.
Identifying $L^{2}(\mathbb{T})$ with its dual and using \eqref{c2}, we have
\begin{equation}\label{c3}
\begin{split}
 \int_{0}^{2\pi} u_{1}\;\overline{\varphi_{0}}\;dx
 &= \left\langle u_{1},\; \varphi_{0}\right\rangle_{H^{s}_{p}\times(H^{s}_{p})'}\\
 &=\int_{0}^{T}\left\langle Gh(\cdot,t), \varphi(\cdot,t)\right\rangle_{H^{s}_{p}\times(H^{s}_{p})'}\;dt\\
&= \int_{0}^{T}\int_{0}^{2\pi} Gh\; \overline{\varphi}\;dx\;dt\\
&=\int_{0}^{2\pi} u(T)\;\overline{\varphi}(T)\;dx\\
&=\int_{0}^{2\pi} u(T)\;\overline{\varphi_{0}}\;dx.
\end{split}
\end{equation}

Identity \eqref{c2} implies that
$\displaystyle{\int_{0}^{2\pi} [u(T)-u_{1}]\;\overline{\varphi_{0}}\;dx=0,
 \; \text{for all smooth function}\; \varphi_{0}.}$
In consequence $u(T)=u_{1}.$
Thus, the lemma is true for all smooth data

In general case, we use density arguments to complete the proof.
\end{proof}

The following result is a characterization for the existence of control to the  system \eqref{introduc2} with $f=G(h)$ and initial data $u_{0}=0.$

\begin{lem}\label{coef}
Let $ s\geq 0,$ $T>0$ be given, and $ \psi_{k}(x) $ as in \eqref{spi}.
If $$u_{1}(x)=\sum_{l\in \mathbb{Z}}
c_{l}\psi_{l}(x)\;\;\in H^{s}_{p}(\mathbb{T}),$$ is a function such that $[u_{1}]=0,$
then  the  system \eqref{introduc2} with $f=G(h)$ and initial data $u_{0}=0$
is exactly controllable to $u_{1},$ that is, $u(x,T)=u_{1}(x), \;\;\forall x\in\mathbb{T},$
if an only if there exists $h\in L^{2}([0,T];H^{s}_{p}(\mathbb{T}))$ such that
\begin{equation}\label{caract5}
\int_{0}^{T}\int_{\mathbb{T}}Gh(x,t)\;
\;e^{-i\lambda_{k}(T-t)}\overline{\psi_{k}(x)}\;dx\;dt=c_{k},\;\forall\;k\in \mathbb{Z},
\end{equation}
where $\lambda_{k}:=k^{3}-\alpha k|k|.$

\end{lem}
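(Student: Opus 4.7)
The plan is to reduce the controllability condition to \eqref{caract5} via the duality identity of Lemma \ref{caractc}, using the Fourier basis $\{\psi_k\}_{k\in\mathbb Z}$ as test functions. First, I would solve the adjoint system \eqref{adsis} explicitly in Fourier: since $\widehat{\mathcal H\partial_x^2 \psi_k}=ik|k|\,\widehat{\psi_k}$ and $\widehat{\partial_x^3 \psi_k}=-ik^3\,\widehat{\psi_k}$, the $k$-th Fourier mode evolves by $\partial_t\widehat\varphi(k)=-i\lambda_k\widehat\varphi(k)$ with $\lambda_k=k^3-\alpha k|k|$. Hence the unique backward solution of \eqref{adsis} with terminal data $\varphi_0=\psi_k$ is the simple exponential
$$\varphi(x,t)=e^{i\lambda_k(T-t)}\psi_k(x).$$

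For the direction $(\Rightarrow)$, I would apply Lemma \ref{caractc} with $\varphi_0=\psi_k\in H^s_p(\mathbb T)\hookrightarrow (H^s_p(\mathbb T))'$ for each $k\in\mathbb Z$. Orthonormality of $\{\psi_k\}$ in $L^2_p(\mathbb T)$ collapses the right-hand side of \eqref{CEQ} to $c_k$, while substitution of the explicit adjoint solution above turns the left-hand side into precisely the space-time integral appearing in \eqref{caract5}. For the converse $(\Leftarrow)$, I would argue by linearity and density: finite linear combinations of $\{\psi_k\}$ are dense in $(H^s_p(\mathbb T))'$, and both sides of the duality identity in Lemma \ref{caractc} are (conjugate-)linear and continuous in $\varphi_0$ — the left by Proposition \ref{Ghop} together with the isometry property of $U(t)$ from Lemma \ref{semig2}, the right by the very definition of the duality pairing. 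Thus \eqref{caract5} extends from basis elements to every $\varphi_0\in (H^s_p(\mathbb T))'$, and Lemma \ref{caractc} then yields $u(T)=u_1$.

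The main obstacle is rather modest: one must track the $\sqrt{2\pi}$ normalizations between the Fourier coefficients $\widehat f(k)$ used in Section \ref{preliminares} and the coefficients in the orthonormal basis $\{\psi_k\}$, and verify that the integrand in \eqref{caract5} is meaningful — which is immediate since Proposition \ref{Ghop} gives $Gh\in L^2([0,T];H^s_p(\mathbb T))$, so $Gh(\cdot,t)\in L^2(\mathbb T)$ for almost every $t$. Beyond this bookkeeping there are no substantive analytical difficulties; the whole argument is essentially a direct moment computation made possible by the explicit Fourier diagonalization of the unitary group $\{U(t)\}$ obtained in Section \ref{section4}.
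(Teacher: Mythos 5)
Your proposal is correct and follows essentially the same route as the paper: both directions rest on Lemma \ref{caractc} with the explicit Fourier solution $\varphi(x,t)=e^{i\lambda_k(T-t)}\psi_k(x)$ of the adjoint system for $\varphi_0=\psi_k$, and the converse is handled by linearity in $\varphi_0$ plus density of the span of the $\psi_k$, which is exactly the paper's device of multiplying by $\overline{\widehat{\varphi_0}(k)}$ and summing. No gaps to report.
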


\begin{proof}
$(\Rightarrow)$ In view of Lemma \ref{caractc}, let us to consider the adjoint system
\begin{equation}\label{adja}
\left \{
\begin{array}{l l}
    \partial_{t}\varphi-\alpha H\partial^{2}_{x}\varphi-\partial^{3}_{x}\varphi=0,&  t>0,\;\;x\in \mathbb{T}\\
    \varphi(x,T)=\varphi_{0}(x), & x\in \mathbb{T}
\end{array}
\right.
\end{equation}
and let $k\in \mathbb{Z}$ be fixed.
 Note that
$\psi_{k}\in (H^{s}_{p}(\mathbb{T}))'.$ So, we suppose $\varphi_{0}=\psi_{k}.$
Then identity \eqref{solf} implies that
\begin{equation}\label{sol1}
\begin{split}
\varphi(x,t)&=U(t-T)\varphi_{0}(x)\\
&=U(T-t)^{\ast}\varphi_{0}(x)\\
&=\left(e^{-il^{2}[\alpha\;\text{sgn}(l)-l](T-t)}\widehat{\varphi_{0}}(l)\right)^{\vee}\\
&=\sum_{l\in \mathbb{Z}}
e^{-il^{2}[\alpha\;\text{sgn}(l)-l](T-t)}\widehat{\varphi_{0}}(l)e^{ilx}\\
&=\sum_{l\in \mathbb{Z}}
e^{i\lambda_{l}(T-t)}\widehat{\varphi_{0}}(l)e^{ilx},
\end{split}
\end{equation}
where $\lambda_{l}=l^{3}-\alpha l\;|l|.$ Since
	\begin{align*}
	\widehat{\psi_{k}}(l)&=\left \{
	\begin{array}{l l}
		\frac{1}{\sqrt{2\pi}}, & if\; k=l \\
		0, &  if \;k\neq l.
	\end{array}
	\right.
\end{align*}
we obtain from \eqref{sol1} that
\begin{align*}
\varphi(x,t)
&=e^{i\lambda_{k}(T-t)}\psi_{k}(x).
\end{align*}

Now, using identity \eqref{CEQ} one gets
\begin{align*}
 \int_{0}^{T}\int_{\mathbb{T}}Gh(x,t)\;\overline{\varphi}(x)\;\;dx\;dt&
 -\int_{\mathbb{T}}
 \left[\sum_{l\in \mathbb{Z}}
c_{l}\psi_{l}(x)\right]\;\overline{\varphi_{0}}(x)\;dx=0.
\end{align*}

Therefore,
\begin{align*}
 \int_{0}^{T}\int_{\mathbb{T}}Gh(x,t)\;e^{-i\lambda_{k}(T-t)}\psi_{-k}(x)\;
 \;dx\;dt&=\int_{\mathbb{T}}
 \left[\sum_{l\in \mathbb{Z}}
c_{l}\psi_{l}(x)\right]\;\psi_{-k}(x)\;dx\\
&= \sum_{l\in \mathbb{Z}}
c_{l}\int_{\mathbb{T}}
\psi_{l}(x)\;\psi_{-k}(x)\;dx\\
 &=c_{k}\;\;\forall k\in \mathbb{Z},
\end{align*}
as required.\\

\noindent
$(\Leftarrow)$
Now, suppose that there exists $h\in L^{2}([0,T];H^{s}_{p}(\mathbb{T}))$ such that
\eqref{caract5} holds. With similar calculations as above, we  obtain
$$\int_{0}^{T}\int_{\mathbb{T}}Gh(x,t)\;
\;\overline{e^{i\lambda_{k}(T-t)}\;\psi_{k}}(x)\;dx\;dt-\int_{0}^{2\pi} u_{1}\;\overline{\psi_{k}}\;dx=0,\;
\forall\;\varphi_{0}=\psi_{k}, \;k\in \mathbb{Z}.$$

Multiplying both sides of the last equality by $\overline{\widehat{\varphi_{0}}(k)}$
and summing over $k\in \mathbb{Z}$, we get
\begin{align*}
\sum_{k\in \mathbb{Z}}\int_{0}^{T}\int_{\mathbb{T}}Gh(x,t)\;
\;\overline{e^{i\lambda_{k}(T-t)}\;\psi_{k}}(x)\;\overline{\widehat{\varphi_{0}}(k)}dx\;dt&=
\sum_{k\in \mathbb{Z}}\int_{0}^{2\pi} u_{1}(x)\;\overline{\psi_{k}}(x)\;
\overline{\widehat{\varphi_{0}}(k)}\;dx.
\end{align*}

Note that
\begin{align*}
\varphi(x,t)&=\sum_{k\in \mathbb{Z}}
e^{i\lambda_{k}(T-t)}\widehat{\varphi_{0}}(k)e^{ikx}
\end{align*}
is the solution of the adjoint system \eqref{adja} and  $\varphi_{0}\in C_{p}^{\infty}(\mathbb{T})$ can be expressed as
$$\varphi_{0}(x)=\sum_{k\in \mathbb{Z}}\widehat{\varphi_{0}}(k)\;\psi_{k}(x),$$
where the series converge uniformly. Thus
 $$\int_{0}^{T}\int_{\mathbb{T}}Gh(x,t)\;
\;\overline{\varphi}(x,t)\;dx\;dt-\int_{0}^{2\pi} u_{1}\;\overline{\varphi_{0}}\;dx=0,\;
\forall\;\varphi_{0}\in C_{p}^{\infty}(\mathbb{T}).$$

The result follows by using  density arguments.
\end{proof}

\begin{lem}\label{invertmatrix}
Let $ \psi_{k}(x) $ be as in \eqref{spi}, and
\begin{equation}\label{invertmatrix1}
m_{j,k}=\widehat{G(\psi_{j})}(k)=\int_{0}^{2\pi}G(\psi_{j})(x) \overline{\psi_{k}}(x)\;dx,\;\;\;\;j,k\in\mathbb{Z},
\end{equation}
where $G$ is as in \eqref{EQ1}. In addition, for any given finite sequence of nonzero integers $k_{j}$, j=1,2,3,....,n,
 let
$$M_{n}= \left(
   \begin{array}{ccc}
     m_{k_{1},k_{1}} & \cdots & m_{k_{1},k_{n}}  \\
     m_{k_{2},k_{1}} & \cdots & m_{k_{2},k_{n}} \\
     \vdots & \vdots & \vdots \\
     m_{k_{n},k_{1}} & \cdots & m_{k_{n},k_{n}} \\
   \end{array}
 \right).$$

 Then

 \begin{itemize}
   \item [$i)$]  there exists a constant $\beta>0,$ depending only on $g$, such that
   $$m_{k,k}\geq \beta,\;\;\;\text{ for any}\; k\in\mathbb{Z}-\{0\}.$$

    \item[$ii)$] $m_{j,0}=0,\;\;\;\text{ for any}\; j\in\mathbb{Z}.$

   \item [$iii)$]$M_{n}$ is an invertible $n\times n$ hermitian matrix.

   \item [$iv)$] there exists $\delta>0,$ depending only on $g$, such that
\begin{equation}\label{posit1}
    \delta_{k}=\|G(\psi_{k})\|^{2}_{L^{2}(\mathbb{T})}> \delta >0,\;\;\text{for all}\;k\in \mathbb{Z}-\{0\}.
\end{equation}

 \end{itemize}

\end{lem}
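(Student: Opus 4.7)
The plan is to reduce everything to explicit Fourier-side identities for $G$, then establish the uniform lower bounds in (i) and (iv) by combining pointwise positivity with the Riemann--Lebesgue lemma.

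First I would compute $G(\psi_j)(x) = g(x)\bigl[\psi_j(x) - \sqrt{2\pi}\,\widehat{g}(-j)\bigr]$ directly from \eqref{EQ1}, and note that by design $\int_0^{2\pi} G(h)(x)\,dx = 0$ for every $h\in L^2(\mathbb{T})$, which is precisely the mean-correction ensuring \eqref{mediaf}. Part (ii) is then immediate: $m_{j,0}=\int G(\psi_j)(x)\overline{\psi_0(x)}\,dx = \tfrac{1}{\sqrt{2\pi}}\int G(\psi_j)\,dx = 0$.

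For (iii), Hermiticity is $m_{j,k} = (G\psi_j,\psi_k)_{L^2} = (\psi_j,G\psi_k)_{L^2} = \overline{m_{k,j}}$, using the self-adjointness of $G$ on $L^2$ proved in the preceding proposition. For invertibility I would prove positive definiteness: for $c\in\mathbb{C}^n\setminus\{0\}$, set $h=\sum_{j=1}^n c_j\psi_{k_j}$, a trigonometric polynomial with $\widehat{h}(0)=0$ since all $k_j\ne 0$. A direct calculation yields
$$\langle M_n c,\,c\rangle_{\mathbb{C}^n} = (Gh,h)_{L^2} = \int_0^{2\pi} g(x)|h(x)|^2\,dx - \Bigl|\int_0^{2\pi} g(x)h(x)\,dx\Bigr|^2,$$
which is nonnegative by the Cauchy--Schwarz inequality applied to the probability measure $g(x)\,dx$ (using $\int g = 1$). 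Equality forces $h$ to be constant on $\omega$; since $h$ is a trigonometric polynomial, hence real-analytic, and $\omega$ is a nonempty open interval, this forces $h\equiv$ const on $\mathbb{T}$, and $\widehat{h}(0)=0$ then implies $h\equiv 0$, contradicting $c\ne 0$.

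For (i), the Fourier computation gives the explicit formula $m_{k,k} = \frac{1}{2\pi} - 2\pi|\widehat{g}(k)|^2$. Applying (iii) to the $1\times 1$ matrix at $k_1=k$ yields $m_{k,k}>0$ for every $k\ne 0$, while Riemann--Lebesgue gives $m_{k,k}\to \frac{1}{2\pi}$ as $|k|\to\infty$; the uniform constant $\beta>0$ follows by taking the minimum over the finitely many remaining indices. For (iv), I would use $G(\psi_k)=g\cdot(\psi_k-\sqrt{2\pi}\,\widehat{g}(-k))$ to expand $\|G(\psi_k)\|_{L^2}^2$; the leading term $\frac{1}{2\pi}\|g\|_{L^2}^2$ is $k$-independent and strictly positive, while the correction terms involve $\widehat{g^2}(-k)$ and $|\widehat{g}(k)|^2$, both tending to $0$ as $|k|\to\infty$. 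Positivity at each fixed $k\ne 0$ again follows from the same analyticity argument as in (iii): $G\psi_k=0$ would force $\psi_k$ to be constant on $\omega$, impossible for $k\ne 0$. A finite-minimum argument then delivers $\delta$.

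The main obstacle is the equality case in the Cauchy--Schwarz inequality underlying (iii); parts (i) and (iv) both ultimately rely on the same analyticity-of-trigonometric-polynomials argument to exclude degeneracy at each fixed nonzero frequency, after which Riemann--Lebesgue handles the tail. The hypothesis that $\omega$ is a genuine open interval and the exclusion $0\notin\{k_j\}$ are essential throughout.
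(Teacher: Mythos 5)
Your proof is correct. Note that the paper itself does not prove this lemma at all: it simply cites Micu--Ortega--Rosier--Zhang for parts $i)$--$iii)$ and Russell--Zhang (see also Linares--Ortega) for part $iv)$. Your argument is a complete, self-contained version of what those references do: the explicit formula $G(\psi_j)=g\cdot(\psi_j-\sqrt{2\pi}\,\widehat{g}(-j))$ gives $ii)$ and the identity $m_{k,k}=\tfrac{1}{2\pi}-2\pi|\widehat{g}(k)|^{2}$; the identity $(Gh,h)_{L^{2}}=\int g|h|^{2}-|\int gh|^{2}$ together with Cauchy--Schwarz for the probability measure $g\,dx$ and the finiteness of zeros of a nonzero trigonometric polynomial yields positive definiteness (hence $iii)$ and pointwise positivity in $i)$ and $iv)$); and Riemann--Lebesgue upgrades pointwise positivity to the uniform constants $\beta$ and $\delta$. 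The only cosmetic point worth tidying is the index convention: the quadratic form you compute is $\sum_{j,l}m_{k_j,k_l}c_j\overline{c_l}$, which is the Hermitian form of $M_n^{\top}=\overline{M_n}$ rather than of $M_n$ itself; since these have the same (real) spectrum, invertibility of $M_n$ follows all the same, but you should state this explicitly.
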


\begin{proof}
The proof of items $i)$, $ii)$, and $iii)$ can be found in  \cite[page 296]{Micu Ortega Rosier and Zhang}.
The proof of $iv)$ can be found in \cite[page 3650]{10} (see also  \cite[page 213]{1}).
\end{proof}

\begin{rem}\label{comporteigen}
The sequence of eigenvalues $\{\lambda_{k}\}_{k \in \mathbb{Z}}$, with
$\lambda_{k}=k^{3}-\alpha k|k|$, satisfies the following properties:
\begin{itemize}
  \item[$i)$] $\lambda_{-k}=-\lambda_{k}$, for all $k\in \mathbb{Z}$.

  \item[$ii)$] $\displaystyle{\lim_{|k| \rightarrow \infty}|\lambda_{k}|=\infty}$.

  \item[$iii)$] $\displaystyle{\lim_{|k| \rightarrow \infty}|\lambda_{k+1}-\lambda_{k}|=\infty}$ (asymptotic gap condition).

  \item[$iv)$] Observe that not all the eigenvalues of the sequence $\{\lambda_{k}\}_{k \in \mathbb{Z}}$
  are distint, it depends on the value of $\alpha$. 
  For each $k_{1}\in \mathbb{Z}$ set
  $I(k_{1})=\{k\in \mathbb{Z}: \lambda_{k}=\lambda_{k_{1}}\}\;\;\;\text{and}\;\;\;|I(k_{1})|=m(k_{1}),$
where $|I(k_{1})|$ denotes the numbers of elements of $I(k_{1}).$
  Then we have the following properties for $m(k_{1}):$
  \begin{itemize}
    \item [a)] $m(k_{1})\leq 3,$ for all $k_{1}\in \mathbb{Z}$. This is a consequence of the fact that
    $m(k_{1})$ is less or equal to the number of integer roots of the equation $f(x):=x^{3}-\alpha x|x|=\beta$,
    where $\beta$ is an arbitrary real number, see the format of the curve in Figure \ref{Autovalores} below.
\begin{figure}[h!]
	\begin{center}
		\includegraphics[width=13cm]{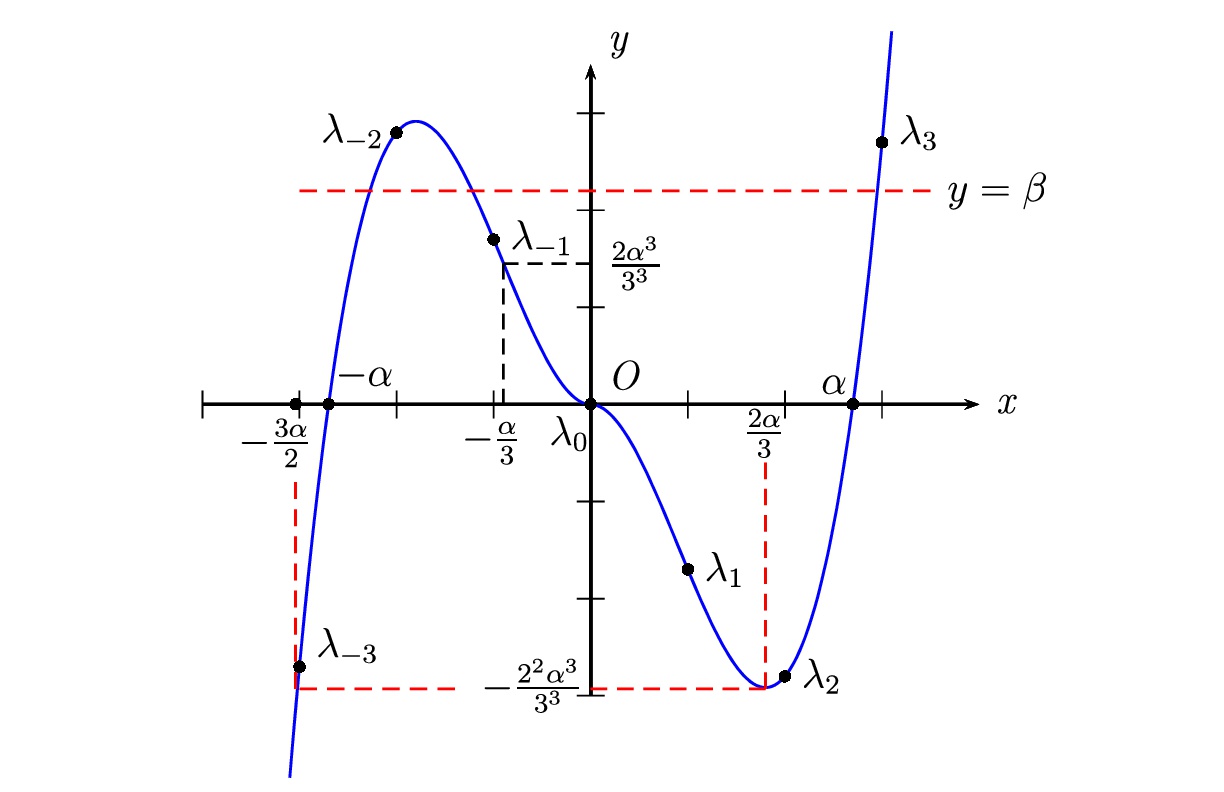}
		\caption{Eigenvalues}\label{Autovalores}
	\end{center}
\end{figure}

    \item [b)]If the sequence of eigenvalues tend to infinity, there exists $k_{1}^{\ast}\in \mathbb{N}$
    such that $m(k_{1})=1$, for all $|k_{1}|>k_{1}^{\ast}$. This is a consequence of the fact
    that the function $x\rightarrow x^{3}-\alpha x |x|$ is strictly increasing
    for $|x|$ large enough.
      \end{itemize}
  
\item[$v)$]  If we count only the distinct eigenvalues, we obtain a sequence
$\{\lambda_{k}\}_{k \in \mathbb{I}}$, where $\mathbb{I}\subseteq\mathbb{Z}$ has the
property that $\lambda_{k_{1}}\neq \lambda_{k_{2}}$, for any $k_{1},k_{2}\in \mathbb{I}$, with $k_{1}\neq k_{2}$.

 \item[$vi)$] From part $a)$ in $iv)$ we infer that there are only
 finitely many integers in $\mathbb{I},$ say, $k_{j},$ $j=1,2,3,...,n,$
 such that one can find another integer $k\neq k_{j}$
 with $\lambda_{k}=\lambda_{k_{j}}.$ Let
 $$\mathbb{I}_{j}:=\{k\in \mathbb{Z}: k\neq k_{j}, \lambda_{k}=\lambda_{k_{j}}\},\;\;\;\;j=1,2,3,...,n.$$
 
 Then $$\mathbb{Z}=\mathbb{I}\cup\mathbb{I}_{1}\cup\mathbb{I}_{2}\cup\cdots\cup\mathbb{I}_{n},$$
where the sets in the right are pairwise disjoint.
  
 \item[$vii$)] From part $b)$ in $iv),$ we infer that 
  \begin{equation}\label{gamma}
  \begin{split}
  \gamma&:=\inf_{\substack{k,n\in \mathbb{I}\\k\neq n}}|\lambda_{k}-\lambda_{n}|
  = \min_{F}|\lambda_{k}-\lambda_{n}|>0,
  \end{split}
  \end{equation}
  where $F:=\left\{n,k\in\mathbb{I}: k\neq n, \;\text{and}\;\;-1-[\frac{3\alpha}{2}]\leq k,n \leq [\frac{3\alpha}{2}]+1\right\}$,
 because\\ $x\rightarrow x^{3}-\alpha x |x|$ is increasing very fast for $|x|>[\frac{3\alpha}{2}]+1.$ 
\end{itemize}
\end{rem}

Now we provide proof of our main theorem regarding controllability of non-homogeneous linear system \eqref{introduc2} with $f=G(h),$ stated in Theorem \ref{ControlLa}.

\begin{proof}[Proof of Theorem \ref{ControlLa}] As discussed above, it is enough to consider $u_{0}=0.$ We prove this theorem in five steps. 
	
\noindent
{\bf Step 1.} We  show that the family $\{e^{i\lambda_{k}t}\}_{k\in \mathbb{I}}$ is a Riesz basis
	(see Definition \ref{Briesz}) for the
closed span $\overline{\text{span}\{e^{i\lambda_{k}t}: k\in \mathbb{I}\}}=:H$ in $L^{2}([0,T]),$ where the set of indices $\mathbb{I}$ was defined in part $v)$ of Remark \ref{comporteigen}.

In fact, since  $L^{2}([0,T])$ is a reflexive separable  Hilbert space so is $H$.
It follows from Definition \ref{complet}  that the sequence  $\{e^{i\lambda_{k}t}\}_{k\in \mathbb{I}}$
is complete in $H.$
On the other hand,  from  item $iii)$ of Remark \ref{comporteigen},  the eigenvalues associated to the linearized Benjamin equation satisfy the assymptotic gap condition which implies
	$$\gamma'=\underset{S\subset \mathbb{I}}{\sup}\;
\underset{k\neq n}{\underset{k,n \in \mathbb{I}\backslash S}{\inf}}
|\lambda_{k}-\lambda_{n}|=+\infty,$$	
where $S$ rums over the finite subsets of $\mathbb{I}.$
Using  Theorem \ref{InghamG2} with $\gamma$ defined by 
\eqref{gamma}, we obtain 
 that there exist positive constants $A$ and $B,$  such that
\begin{equation}\label{indes}
 A\sum_{n \in \mathbb{I}}|b_{n}|^{2}\leq \int_{0}^{T}|f(t)|^{2}dt\leq B\sum_{n \in \mathbb{I}}|b_{n}|^{2},
\end{equation}
for all functions of the form  $f(t)=\sum\limits_{ n \in \mathbb{I}}b_{n}e^{i\lambda_{n}t},$ $t\in [0,T]$
with  square-summable complex coefficients $b_{n}.$ In particular, if $b_{1},...,b_{N}$ are $N$ arbitrary  constants we have
$$\displaystyle{A\sum_{n=1}^{N}|b_{n}|^{2}\leq \int_{0}^{T}\left|\sum_{n \in \mathbb{I}}b_{n}e^{i\lambda_{n}t}\right|^{2}dt
\leq B\sum_{n=1}^{N}|b_{n}|^{2}}.$$
Thus
 $$\displaystyle{A\sum_{n=1}^{N}|b_{n}|^{2}\leq \left\|\sum_{n \in \mathbb{I}}b_{n}e^{i\lambda_{n}t}\right\|^{2}_{H}
 \leq B\sum_{n=1}^{N}|b_{n}|^{2}\;\;\text{for all}\;\;b_{1},...,b_{N}\;\;\text{scalars}.}$$

Now, applying Theorem \ref{BasRieszTheo} we conclude that
$\{e^{i\lambda_{k}t}\}_{k\in \mathbb{I}}$ is a Riesz basis for the
closed span $H$ in $L^{2}([0,T]).$ 

\noindent
{\bf Step 2.}  In this step we show the existence of a
unique biorthogonal dual basis $\{q_{j}\}_{j\in \mathbb{I}}\subseteq H^{\ast}.$

 Indeed, theorem \ref{BasRieszTheo}  implies that
$\{e^{i\lambda_{k}t}\}_{k\in \mathbb{I}}$ is a complete Bessel sequence and possesses a biorthogonal system $\{q_{j}\}_{j\in \mathbb{I}}$ which is also
a complete Bessel sequence. Moreover, Theorem  \ref{biort}
 implies that $\{q_{j}\}_{j\in \mathbb{I}}$ is a basis for $H^{\ast}$ which can be identified with $H,$
therefore, $\{q_{j}\}_{j\in \mathbb{I}}$ is also a Riesz basis for $H.$
So, by Lemma \ref{carctseqmini} part $1,$ we get that $\{e^{i\lambda_{k}t}\}_{k\in \mathbb{I}}$ is minimal.
In consequence, we have the existence of a unique biorthogonal dual basis $\{q_{j}\}_{j\in \mathbb{I}}\subseteq H^{\ast}$
due to exactness (see Definition \eqref{Minimalseq})  of the sequence $\{e^{i\lambda_{k}t}\}_{k\in \mathbb{I}}$ and   Lemma \ref{carctseqmini} part $2.$
Thus
\begin{equation}\label{dualb}
    (e^{i\lambda_{k}t}\;,\;q_{j})_{H}=\int_{0}^{T}e^{i\lambda_{k}t}\overline{q_{j}}(t)\;dt=\delta_{kj},\;\;\forall\;k,j\in \mathbb{I}.
\end{equation}

\noindent
{\bf Step 3.}  Here we will define an adequate control function $h.$

In fact, in Step 2, we  found a sequence of functions $q_{j}$ where $j$ is running on the set of indices $\mathbb{I}.$  In this step, we will need to define a sequence of functions $q_{j}$ with $j$ running on  $\mathbb{Z}$. Note that, $\mathbb{Z}=\mathbb{I}\cup\mathbb{I}_{1}\cup\mathbb{I}_{2}\cup\cdots\cup\mathbb{I}_{n},$  so it is enough to define this sequence for indices in $\mathbb{I}_j$, $j=1,\cdots, n$. Furthermore, recall from  part $vi)$ in Remark \ref{comporteigen} that, each $\mathbb{I}_{j}$ contains at most
2 integers. Without loss of generality, we may assume that
$$\mathbb{I}_{j}=\{k_{j,1}, k_{j,2}\},\;\;\;\;j=1,2,3,\cdots,n.$$

We denote $k_{j}$ by $k_{j,0}$ for any $j=1,2,3,..,n.$ Therefore, for
$k_{j,l}$ we define
$$q_{k_{j,l}}:=q_{k_{j,0}}=q_{k_{j}},\;\;\;\text{for all}\;j=1,2,3,...,n,\;\;\text{and}\;\;l=0,1,2.$$

Also, it is important to note that $$\lambda_{k_{j,l}}=\lambda_{k_{j}}
,\;\;\;\text{for all}\;j=1,2,3,\cdots,n,\;\;\text{and}\;\;l=0,1,2.$$

For suitable $h_{j}$'s, consider a control function $h$ defined  by
\begin{equation}\label{thecontrol}
    h=\sum_{j \in \mathbb{Z}} h_{j}\;\overline{q_{j}}(t)\;\psi_{j}(x).
\end{equation}

 Note that, using the identity $G(\overline{q_{j}}(t)\;\psi_{j})=\overline{q_{j}}(t)\;G(\psi_{j}),$ we obtain
\begin{equation}\label{thecontrol1}
{\footnotesize
\begin{split}
\int_{0}^{T}\int_{0}^{2\pi}G(h)(x,t) e^{-i\lambda_{k}(T-t)} \overline{\psi_{k}}(x)\;dx\;dt&=
\int_{0}^{T}\int_{0}^{2\pi}\left(\sum_{j\in \mathbb{Z}}h_{j}\overline{q_{j}}(t)G(\psi_{j})(x,t)\right)
e^{-i\lambda_{k}(T-t)} \overline{\psi_{k}}(x)\;dx\;dt\\
&=
\sum_{j\in \mathbb{Z}}h_{j}\int_{0}^{T}\overline{q_{j}}(t) e^{-i\lambda_{k}(T-t)} \;dt\int_{0}^{2\pi}G(\psi_{j})(x) \overline{\psi_{j}}(x)\;
\;dx\\
&=\sum_{j\in \mathbb{Z}}h_{j} e^{-i\lambda_{k}T} m_{j,k} 
\int_{0}^{T}\overline{q_{j}}(t)e^{i\lambda_{k}t}\;dt.
\end{split}}
\end{equation}

\noindent
{\bf Step 4.} 
 In this step we find  $h_{j}$'s such that $h$ defined by \eqref{thecontrol} serves as a required control function. For this, we use the identity \eqref{thecontrol1} and  Lemma \ref{coef} applied to
$$u_{1}(x)=\sum_{n\in \mathbb{Z}}
c_{n}\psi_{n}(x)\;\;\in H^{s}_{p}(\mathbb{T}),\;\;\text{with}\;\;[u_{1}]=0\;\;(c_{0}=0\;\;\text{and}\;\;u_{0}=0),$$
to infer that it is enough to consider $h_{j}$'s satisfying
\begin{equation}\label{h form}
   c_{k}=\sum_{j\in \mathbb{Z}}h_{j}e^{-i\lambda_{k}T}m_{j,k}
\int_{0}^{T}\overline{q_{j}}(t)e^{i\lambda_{k}t}\;dt.
\end{equation}

Note that,  part $ii)$ of Lemma \ref{invertmatrix} implies that the equation \eqref{h form} is
satisfied for $k=0,$ independently of the values of $h_{j}.$  Moreover, from \eqref{dualb} we obtain that
$$c_{k}=h_{k}m_{k,k}\;e^{-i\lambda_{k}T},\;\;\;\text{if}\;k\neq k_{j,l},\;\;
l=0,1,2,\;\;j=1,2,3,...,n;$$
and for $k= k_{j,l},\;\;
l=0,1,2,\;\;j=1,2,3,...,n.$
$$\left\{
    \begin{array}{ll}
      \displaystyle{c_{k,0}=\sum_{l=0}^{2}h_{k_{j,l}} m_{k_{j,l},k_{j,0}}e^{-i\lambda_{k_{j,0}}T}} & \hbox{;} \\
     \displaystyle{c_{k,1}=\sum_{l=0}^{2}h_{k_{j,l}} m_{k_{j,l},k_{j,1}}e^{-i\lambda_{k_{j,1}}T}} & \hbox{;} \\
     \displaystyle{c_{k,2}=\sum_{l=0}^{2}h_{k_{j,l}} m_{k_{j,l},k_{j,2}}e^{-i\lambda_{k_{j,2}}T}} & \hbox{.}
    \end{array}
  \right.$$

 Therefore, choosing $h_{0}=0,$ and using part $iii)$ of  Lemma \ref{invertmatrix}, we obtain
\begin{equation}\label{hform3}
h_{k}=\frac{c_{k}\;\; e^{i\lambda_{k}T}}{m_{k,k}},\;\;\;\text{if}\;k\neq 0\;\text{and}\;k\neq k_{j,l},\;\;
l=0,1,2,\;\;j=1,2,3,...,n;
\end{equation}
and
\begin{equation}\label{hform4}
{\footnotesize
\left(
  \begin{array}{c}
    h_{k_{j,0}}\\
    h_{k_{j,1}}\\
    h_{k_{j,2}}\\
  \end{array}
\right)^{\top}=
\left(
  \begin{array}{c}
    c_{k_{j,0}} e^{i\lambda_{k_{j,0}}T}\\
    c_{k_{j,1}} e^{i\lambda_{k_{j,1}}T}\\
    c_{k_{j,2}} e^{i\lambda_{k_{j,2}}T}\\
  \end{array}
\right)^{\top}
M_{j}^{-1},\;\;\text{for} \; j=1,2,3,...,n,}
\end{equation}
where
$$M_{j}=\left(
  \begin{array}{ccc}
    m_{k_{j,0},k_{j,0}} & m_{k_{j,0},k_{j,1}} & m_{k_{j,0},k_{j,2}} \\
    m_{k_{j,1},k_{j,0}} & m_{k_{j,1},k_{j,1}} & m_{k_{j,1},k_{j,2}} \\
    m_{k_{j,2},k_{j,0}} & m_{k_{j,2},k_{j,1}} & m_{k_{j,2},k_{j,2}}. \\
  \end{array}
\right).$$

In this way, we take $h_{j}$'s given by \eqref{hform3} and 
\eqref{hform4}. \\

\noindent
{\bf Step 5.} 
In this step we prove that the unique function $h$ defined by
\eqref{thecontrol}
belongs to $L^{2}([0,T];H_{p}^{s}(\mathbb{T}),$ where 
$h_{0}=0,$ and $h_{k}$ with $k\neq 0$ is defined by \eqref{hform3} and \eqref{hform4}.

 Indeed, identifying $H^{\ast}$ with $H$, and using the Remark \ref{baseH1}, together with the fact
that $\{q_{j}\}_{j\in \mathbb{I}}$ is a Riesz basis for $H$ we obtain
\begin{align*}
\begin{split}
\|h\|^{2}_{L^{2}([0,T];H_{p}^{s}(\mathbb{T}))} 
&=C\sum_{k\in \mathbb{Z}} (1+|k|)^{2s} \int_{0}^{T}|h_{k}\; q_{k}(t)|^{2}\;dt\\
&\leq C \sum_{k\in \mathbb{Z}}(1+|k|)^{2s} B_{2}\;|h_{k}|^{2},
\end{split}
\end{align*}
where $B_{2}$ is the constant given by the Bessel type inequality (similar to \eqref{indes}) for
the Riesz basis $\{q_{j}\}_{j\in \mathbb{I}}$ in $H.$
Thus, from identity \eqref{hform3} and Lemma \eqref{invertmatrix} part $i),$  we obtain
\begin{equation}\label{hform5}
{\footnotesize
\begin{split}
\|h\|^{2}_{L^{2}([0,T];H_{p}^{s}(\mathbb{T}))}
&\leq C B_{2}\underset{l=0,1,2\;\;\;j=1,2,...,n}
{\sum_{ k\in \mathbb{Z}-\{0\}\;\;k\neq k_{j,l}}} (1+|k|)^{2s}\left|\frac{c_{k} e^{i\lambda_{k}T}}{m_{k,k}}\right|^{2}
+ C B_{2} \sum_{j=1}^{n}\sum_{l=0}^{2}(1+|k_{j,l}|)^{2s}|h_{k_{j,l}}|^{2} \\
&\leq \frac{C B_{2}}{\beta^{2}}\underset{l=0,1,2\;\;\;j=1,2,...,n}
{\sum_{ k\in \mathbb{Z}-\{0\}\;\;k\neq k_{j,l}}} (1+|k|)^{2s}\left|c_{k}\right|^{2}
+ C B_{2} \sum_{j=1}^{n}\sum_{l=0}^{2}(1+|k_{j,l}|)^{2s}|h_{k_{j,l}}|^{2}.
\end{split}}
\end{equation}

 From identity \eqref{hform4} we obtain that for each $l=0,1,2$ and $j=1,2,...,n$
\begin{align*}
\begin{split}
|h_{j,l}|^{2}&\leq \sum_{m=0}^{2}|h_{j,m}|^{2}
\leq \left( \sum_{m=0}^{2}\left|c_{k_{j,m}}e^{i\lambda_{k_{j,m}}}\right|^{2} \right)\|M_{j}^{-1}\|^{2}
\leq \|M_{j}^{-1}\|^{2}\sum_{m=0}^{2}|c_{k_{j,m}}|^{2},
\end{split}
\end{align*}
where $\|M_{j}^{-1}\|$ is the Euclidean norm of the matrix $M_{j}^{-1}.$
This implies that for each $l=0,1,2$ and $j=1,2,...,n$
\begin{equation}\label{hform7}
\begin{split}
(1+|k_{j,l}|)^{2s}|h_{j,l}|^{2}
& \leq\sum_{m=0}^{2}\|M_{j}^{-1}\|^{2}
\frac{(1+|k_{j,l}|)^{2s}}{(1+|k_{j,m}|)^{2s}}(1+|k_{j,m}|)^{2s}|c_{k_{j,m}}|^{2}\\
 &\leq C(s)\sum_{m=0}^{2}(1+|k_{j,m}|)^{2s}|c_{k_{j,m}}|^{2},
\end{split}
\end{equation}
where $\displaystyle{C(s)=\underset{m,l=0,1,2}{\max_{j=1,2,...,n}}
 \left\{\|M_{j}^{-1}\|^{2}\frac{(1+|k_{j,l}|)^{2s}}{(1+|k_{j,m}|)^{2s}}\right\} }.$
 
Therefore, using inequalities \eqref{hform5}, and \eqref{hform7}, we obtain
\begin{equation}\label{cota}
{\footnotesize
\begin{split}
\|h\|^{2}_{L^{2}([0,T];H_{p}^{s}(\mathbb{T}))}
&\leq \frac{ C B_{2}}{\beta^{2}}\underset{l=0,1,2\;\;\;j=1,2,...,n}
{\sum_{ k\in \mathbb{Z}-\{0\}\;\;k\neq k_{j,l}}} (1+|k|)^{2s}\left|c_{k}\right|^{2}
+ 3C B_{2} C(s)\sum_{j=1}^{n}
\sum_{m=0}^{2}(1+|k_{j,m}|)^{2s}|c_{k_{j,m}}|^{2}\\
& \leq \nu^{2} \|u_{1}\|^{2}_{H^{s}_{p}(\mathbb{T})},
\end{split}}
\end{equation}
where $\displaystyle{\nu^{2}\equiv \nu^{2}(s,g,T)=\max\left\{\frac{ C B_{2}}{\beta^{2}}, 3CB_{2}C(s)\right\}}$.

 This completes the proof of the theorem.
\end{proof}

\begin{rem}
The dependence of $\nu$ with respect to $T$ is implicit in the constant $B_{2}$ which is obtained by applying Theorem \ref{InghamG2}.
\end{rem}

 Theorem \ref{ControlLa} allows us to get the following corollary.

\begin{cor} \label{controloperator}
For $s\geq 0,$ and $T>0$ given, there exists a unique
bounded linear  operator
$\Phi:H_{p}^{s}(\mathbb{T})\times H_{p}^{s}(\mathbb{T})\rightarrow L^{2}([0,T];H_{p}^{s}(\mathbb{T}))$ defined by
$\Phi(u_{0},u_{1}):=h,$ for all $(u_{0},\;u_{1})\in H_{p}^{s}(\mathbb{T})\times H_{p}^{s}(\mathbb{T})$ (see \eqref{thecontrol})
such that
\begin{equation}\label{cont}
u_{1}=U(T)u_{0}+\int_{0}^{T}U(T-s)(G(\Phi(u_{0},u_{1})))(\cdot,s)\;ds,
\end{equation}
 and
\begin{equation}\label{oprestima}
\|\Phi(u_{0},u_{1})\|_{L^{2}([0,T];H_{p}^{s}(\mathbb{T}))} \leq \nu\; (\|u_{0}\|_{H_{p}^{s}(\mathbb{T})}
+\|u_{1}\|_{H_{p}^{s}(\mathbb{T})}),
\end{equation}
where $\nu$ depends only on $s,\;T,$ and $g$ (see \eqref{gcondition}).
\end{cor}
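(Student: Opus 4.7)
The plan is to exhibit $\Phi$ directly as the map produced by the explicit construction of $h$ carried out in the proof of Theorem~\ref{ControlLa}. By the reduction discussion preceding that theorem, it suffices to handle pairs of the form $(0, v_1)$ with $v_1 := u_1 - U(T) u_0$ (where $[u_0]=[u_1]$, so $[v_1]=0$); setting $\Phi(u_0, u_1) := \Phi(0, u_1 - U(T) u_0)$ then extends the definition to arbitrary admissible pairs in $H_p^s(\mathbb{T}) \times H_p^s(\mathbb{T})$. For $(0, v_1)$ I would read off the control $h$ from formula \eqref{thecontrol}, where the sequence $\{h_k\}_{k\in\mathbb{Z}}$ is given by the scalar formula \eqref{hform3} at indices whose eigenvalue $\lambda_k$ is simple and by the matrix formula \eqref{hform4} at the finitely many indices $k_{j,l}$ grouped with multiple eigenvalues, and where the biorthogonal family $\{q_j\}$ is the unique one produced in Steps~1--2 of that proof.

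I would then verify the three claimed properties in turn. \emph{Linearity} follows by inspection: the Fourier coefficients $c_k=\widehat{v_1}(k)$ depend linearly on $v_1$; the scalar assignment \eqref{hform3} is linear in $c_k$; the block assignment \eqref{hform4} is linear in $(c_{k_{j,0}}, c_{k_{j,1}}, c_{k_{j,2}})$ via the fixed invertible matrix $M_j^{-1}$ (Lemma~\ref{invertmatrix}, part $iii)$); and the assembly $h=\sum_k h_k\overline{q_k}(t)\psi_k(x)$ is linear in the $h_k$'s. Composition with the linear map $(u_0,u_1)\mapsto u_1 - U(T)u_0$ preserves linearity in $(u_0,u_1)$. \emph{Boundedness} and the quantitative bound \eqref{oprestima} are read directly from the chain of inequalities culminating in \eqref{cota}, applied to the datum $v_1$; since $U(T)$ is an $H_p^s(\mathbb{T})$-isometry by Lemma~\ref{semig2}, one has $\|v_1\|_{H_p^s}\le \|u_0\|_{H_p^s}+\|u_1\|_{H_p^s}$, producing the stated estimate with the same constant $\nu(s,g,T)$.

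The identity \eqref{cont} is merely a restatement of the property $u(T)=u_1$ for the mild solution $u(t)=U(t)u_0+\int_0^tU(t-s)G(h)(\cdot,s)\,ds$ given by Lemma~\ref{EUNH}, which is exactly what Theorem~\ref{ControlLa} delivers for the constructed $h=\Phi(u_0,u_1)$. \emph{Uniqueness} of $\Phi$ as the operator defined by this construction is automatic, since every ingredient (the biorthogonal system $\{q_j\}_{j\in\mathbb{I}}$, the scalar formula \eqref{hform3}, and the matrices $M_j$ with their inverses in \eqref{hform4}) is uniquely determined by the arguments in Steps~1--4 of Theorem~\ref{ControlLa}. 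I do not foresee any essential obstacle; the only point requiring a little care is the bookkeeping for the finitely many degenerate eigenvalues handled through $M_j^{-1}$, and this is already in place.
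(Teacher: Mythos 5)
Your proposal is correct and follows essentially the same route as the paper, which states the corollary as an immediate consequence of the constructive proof of Theorem \ref{ControlLa}: the control $h$ in \eqref{thecontrol} is built by explicit formulas \eqref{hform3}--\eqref{hform4} that are linear in the Fourier coefficients of the target, the bound \eqref{cota} gives \eqref{oprestima} after the reduction $v_1=u_1-U(T)u_0$ (using that $U(T)$ is an isometry), and uniqueness comes from the uniqueness of the biorthogonal system. Your write-up simply makes explicit the bookkeeping the paper leaves implicit.
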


 Also, Lemma \ref{caractc} and Corollary \ref{controloperator} allow us to get the following observability inequality, which is fundamental to obtain a result on exponential asymptotic stabilization with decay rate as large as one desires for the system   \eqref{2D-BO2}.

 \begin{cor} \label{controloperator1}
  Let $T>0$ be given. There exists $\delta>0$ such that
$$\int_{0}^{T}\|GU(-\tau)\phi\|
    ^{2}_{L^{2}(\mathbb{T})}(\tau) \; d\tau \geq \delta^{2}
    \|\phi\|^{2}_{L^{2}(\mathbb{T})}, $$
    for any $\phi \in L^{2}(\mathbb{T}).$
 \end{cor}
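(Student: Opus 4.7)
The plan is to deduce this observability inequality as a dual statement to the exact controllability result in Theorem \ref{ControlLa}, exactly in the Hilbert Uniqueness Method spirit. The key observation is that the adjoint solution in Lemma \ref{caractc} with final datum $\varphi_{0} = \phi$ is $\varphi(\cdot, t) = U(t-T)\phi$, so after the change of variable $\tau = T-t$ one has $\int_{0}^{T}\|G\varphi(\cdot,t)\|_{L^{2}}^{2}\,dt = \int_{0}^{T}\|GU(-\tau)\phi\|_{L^{2}}^{2}\,d\tau$, which is precisely the left-hand side of the claim.

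First, I would restrict attention to $\phi \in L^{2}(\mathbb{T})$ with $[\phi]=0$, since constants lie in the kernel of $G$ and are fixed by $U(t)$, so the inequality is naturally a statement on the mean-zero subspace (the general case then follows by replacing $\phi$ by $\phi-[\phi]$, or by interpreting $\|\phi\|_{L^{2}}$ on the right as the mean-zero component). For such $\phi$, apply Theorem \ref{ControlLa} with $s=0$, $u_{0}=0$, $u_{1}=\phi$ to obtain a control $h\in L^{2}([0,T];L^{2}(\mathbb{T}))$ that steers the system to $\phi$ at time $T$ and satisfies the continuity bound $\|h\|_{L^{2}([0,T];L^{2})} \leq \nu\, \|\phi\|_{L^{2}}$.

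Next, I would invoke Lemma \ref{caractc} with $\varphi_{0} = \phi$. The characterization gives
$$\int_{0}^{T}\int_{\mathbb{T}} G(h)(x,t)\,\overline{\varphi(x,t)}\,dx\,dt \;=\; \int_{\mathbb{T}} \phi(x)\,\overline{\phi(x)}\,dx \;=\; \|\phi\|_{L^{2}}^{2}.$$
Since $G$ is self-adjoint on $L^{2}(\mathbb{T})$ (by the proposition after Proposition \ref{Ghop}), the left-hand side equals $\int_{0}^{T}(h(\cdot,t),\,G\varphi(\cdot,t))_{L^{2}}\,dt$. An application of Cauchy--Schwarz in $L^{2}([0,T];L^{2}(\mathbb{T}))$ together with the control bound then gives
$$\|\phi\|_{L^{2}}^{2} \;\leq\; \|h\|_{L^{2}([0,T];L^{2})}\,\|G\varphi\|_{L^{2}([0,T];L^{2})} \;\leq\; \nu\,\|\phi\|_{L^{2}}\,\|G\varphi\|_{L^{2}([0,T];L^{2})},$$
whence $\|\phi\|_{L^{2}} \leq \nu\,\|G\varphi\|_{L^{2}([0,T];L^{2})}$.

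Finally, the change of variable $\tau = T-t$ identifies $\|G\varphi\|_{L^{2}([0,T];L^{2})}^{2}$ with $\int_{0}^{T}\|GU(-\tau)\phi\|_{L^{2}}^{2}\,d\tau$, yielding the desired inequality with $\delta := 1/\nu$. There is no genuine obstacle here: all the deep work sits inside Theorem \ref{ControlLa} (the Riesz basis construction and generalized Ingham inequality of Section \ref{section5}); the present corollary is essentially a one-line duality bookkeeping argument, with the only care needed being the restriction to mean-zero data dictated by the conservation law $\frac{d}{dt}\int u\,dx = \int f\,dx$ underlying the controllability statement.
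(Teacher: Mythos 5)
Your proposal is correct, and it takes a genuinely different (and in one respect sharper) route than the paper. The paper's proof is abstract: it defines the control-to-state map $F_{T}(h)=u(\cdot,T)$ on $L^{2}([0,T];L^{2}_{p}(\mathbb{T}))$, shows it is bounded and onto using Corollary \ref{controloperator}, and then invokes the open-mapping/closed-range theorem (Rudin, Theorem 4.13) to conclude that $F_{T}^{\ast}=G^{\ast}U(T-\cdot)^{\ast}$ is bounded below, which is the observability inequality after the change of variable $\tau=T-t$. You instead run the duality quantitatively: feed the control cost bound $\|h\|\leq\nu\|\phi\|$ from Theorem \ref{ControlLa} into the identity of Lemma \ref{caractc} with $u_{1}=\varphi_{0}=\phi$, use self-adjointness of $G$ and Cauchy--Schwarz, and read off $\delta=1/\nu$ explicitly. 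Both arguments are sound and both lean on the same deep input (Theorem \ref{ControlLa}); yours buys an explicit constant, while the paper's is shorter once the abstract machinery is accepted. Notably, your remark about restricting to mean-zero data is not mere pedantry: since $G$ annihilates constants and $U(t)$ preserves the mean, the inequality as literally stated fails for nonzero constant $\phi$, and the paper's claim that $F_{T}$ is onto all of $L^{2}_{p}(\mathbb{T})$ is only accurate on the mean-zero subspace (exact controllability from $u_{0}=0$ only reaches states with $[u_{1}]=0$). Your proposal handles this correctly where the paper's proof silently glosses over it.
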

\begin{proof}
	Let $T>0.$ Define a linear map $F_{T}:L^{2}([0,T]; L^{2}_{p}(\mathbb{T}))\rightarrow L^{2}_{p}(\mathbb{T})$ by
	\begin{equation}\label{caractc3}
		F_{T}(h)=u(\cdot,T),
	\end{equation}
	where $u=u(x,t)$ is the solution (mild solution) of
	\begin{equation}\label{caractc4}
	\left \{
	\begin{array}{l l}
	\partial_{t}u-\alpha H\partial^{2}_{x}u-\partial^{3}_{x}u=Gh(x,t),&  t\in (0,T),\;\;x\in \mathbb{T}\\
	u(x,0)=0, & x\in \mathbb{T}.
	\end{array}
	\right.
	\end{equation}	
	
	Note that if $u_{1}\in L^{2}_{p}(\mathbb{T})$ is given, then
	from Corollary \ref{controloperator} there exists
	$h$ such that 
	\begin{eqnarray}\label{ftprop}
	F_{T}(h)=u(T)=u_{1}.
	\end{eqnarray}

	Therefore, $F_{T}$ is onto and trivially $ \text{Ran}(F_{T})$ is dense in $L^{2}_{p}(\mathbb{T}).$
	
	On the other hand, from Corollary \ref{controloperator}, for $\;u_{1}\in L_{p}^{2}(\mathbb{T}),$ we have that
	\begin{equation}\label{cont1}
	u_{1}=\int_{0}^{T}U(T-s)(G(h))(\cdot,s)\;ds.
	\end{equation}
	
	Therefore, from \eqref{ftprop} and \eqref{cont1}
\begin{align*}
	\|F_{T}(h)\|_{L^{2}_{p}(\mathbb{T})}
	&=\left\|\int_{0}^{T}U(T-s)(G(h))(\cdot,s)\;ds
	\right\|_{L^{2}_{p}(\mathbb{T})}\\
	&\leq \int_{0}^{T}\left\|U(T-s)(G(h))(\cdot,s)
	\right\|_{L^{2}_{p}(\mathbb{T})}\;ds\\
	&\leq c_{g}\;\int_{0}^{T}\|h\|_{L^{2}_{p}(\mathbb{T})}\;ds\\
	&\leq c_{g}\;T^{\frac{1}{2}} \|h\|_{L^{2}_{p}([0,T];L^{2}(\mathbb{T}))}.
\end{align*}

	So, $F_{T}$ is a bounded linear operator. Thus, $F_{T}^{\ast}$ exists, is a bounded linear operator, and  is one-to-one (see  Rudin  \cite[Corollary b) page 99]{Rudin}).
	Also, from Theorem 4.13  in \cite{Rudin} (see also  \cite[page 35]{Coron}), we have that there exists
	$\delta >0$ such that
	\begin{equation}\label{caractc12}
	\left\|F_{T}^{\ast}(\phi^{\ast})
	\right\|_{\left(L^{2}([0,T];L^{2}_{p}(\mathbb{T}))\right)'}
	\geq \delta \;\|\phi^{\ast}\|_{\left(L^{2}_{p}(\mathbb{T})\right)'},
	\;\;\;\text{ for all}\;\;
	\phi^{\ast}\in \left(L^{2}_{p}(\mathbb{T})\right)'.
	\end{equation}

	From Lemma \ref{caractc},  we have that the solution $u$ of  \eqref{caractc4} satisfies
	\begin{equation}\label{caractc6}
	\int_{0}^{T}\left\langle Gh(\cdot,t),\varphi(\cdot,t)\right\rangle_{L^{2}_{p} \times(L^{2}_{p})'}dt
	-\left\langle u_{1},\varphi_{0}\right\rangle_{L^{2}_{p}\times(L^{2}_{p})'}=0,
	\end{equation}
	for any $\varphi_{0}\in (L^{2}_{p}(\mathbb{T}))',$ and $\varphi$
	the solution of  the adjoint system \eqref{adsis}. 
	Note that $$\varphi(\cdot,t)=U(T-t)^{\ast}\varphi_{0}.$$ Then it follows from
	\eqref{caractc6} that
	\begin{align*}
	\int_{0}^{T}\left\langle h(\cdot,t),G^{\ast}U(T-t)^{\ast}\varphi_{0}
	\right\rangle_{L^{2}_{p}(\mathbb{T})\times
		(L^{2}_{p}(\mathbb{T}))'}dt
	&=\left\langle u(\cdot,T),\varphi_{0}\right
	\rangle_{L^{2}_{p}(\mathbb{T})\times
		(L^{2}_{p}(\mathbb{T}))'}\\
	&=\left\langle F_{T}(h),\varphi_{0}\right
	\rangle_{L^{2}_{p}(\mathbb{T})\times
		(L^{2}_{p}(\mathbb{T}))'}\\
	&=\left\langle h\;,\;F_{T}^{\ast}\varphi_{0}\right\rangle
	_{L^{2}([0,T];L^{2}_{p}(\mathbb{T})) \times \left(L^{2}([0,T];L_{p}^{2}(\mathbb{T}))\right)'}.
	\end{align*}
	
	Therefore, $F_{T}^{\ast}=G^{\ast}U(T-t)^{\ast},$ and using \eqref{caractc12},
	we have
$$\left\|G^{\ast}U(T-t)^{\ast}(\phi^{\ast})
	\right\|_{L^{2}([0,T];(L^{2}_{p}(\mathbb{T}))')}
	\geq \delta \;\|\phi^{\ast}\|_{\left(L^{2}_{p}(\mathbb{T})\right)'},
	\;\;\;\text{ for all}\;\;
	\phi^{\ast}\in \left(L^{2}_{p}(\mathbb{T})\right)'.$$
	
	It means,
$$\int_{0}^{T}\|G^{\ast}U(T-t)^{\ast}(\phi^{\ast}(x))
	\|^{2}_{(L^{2}_{p}(\mathbb{T}))'}\;dt
	\geq \delta^{2}\; \|\phi^{\ast}\|^{2}_{(L_{p}^{2}(\mathbb{T}))'},\;\;\;\text{ for all}\;\;
	\phi^{\ast}\in (L^{2}_{p}(\mathbb{T}))'.$$

	Performing a change of the temporal variable $ \tau=T-t,$ we obtain 
$$\int_{0}^{T}\|G^{\ast}U(\tau)^{\ast}(\phi^{\ast}(x))
	\|^{2}_{(L^{2}_{p}(\mathbb{T}))'}\;dt
	\geq \delta^{2}\; \|\phi^{\ast}\|^{2}_{(L_{p}^{2}(\mathbb{T}))'},\;\;\;\text{ for all}\;\;
	\phi^{\ast}\in (L^{2}_{p}(\mathbb{T}))'.$$

Identifying $L^{2}(\mathbb{T})$ with its dual we conclude the proof.
\end{proof}

Before ending this section, we record an observation. To simplify the calculations in the study of the stabilization problem, we would like to consider solutions of system \eqref{introduc2} with  mean zero. Note that, in general, the assumption $[u(\cdot,t)]=[u_{0}]=[u_{1}]=0$   is not valid for solutions of system \eqref{introduc2}.
To solve that problem, let $u$ be a solution of equation \eqref{2D-BO1} with
$[u(\cdot,t)]=[u_{0}]=[u_{1}]=:\mu,$ for all $t\in [0,T]$  and
let $v(x,t)=u(x,t)-\mu.$ Note that $v$ solves
\begin{equation}\label{nonlinearintro3}
	\left \{
	\begin{array}{l l}
		\partial_{t}v-\alpha H\partial^{2}_{x}v-\partial^{3}_{x}v+2\mu\partial_{x}v
		=f(x,t),&  t\in(0,T),\;\;x\in \mathbb{T}\\
		v(x,0)=v_{0}(x):=u_{0}(x)-\mu, & x\in \mathbb{T},
	\end{array}
	\right.
\end{equation}
where $\mu \in \mathbb{R},$ and
$[v(\cdot,t)]=[v_{0}]=0,\;\;\forall t\in [0,T].$ 

Conversely, if $v$ is a solution of equation \eqref{nonlinearintro3}
then, $u(x,t)=v(x,t)+\mu$ is a solution of system \eqref{introduc2}. In consequence, we must resolve the 
controllability and stabilization problems for the system
\eqref{nonlinearintro3}.
As before, we begin by considering the linear non homogeneous  system 
\begin{equation}\label{linearintro3}
	\left \{
	\begin{array}{l l}
		\partial_{t}v-\alpha H\partial^{2}_{x}v-\partial^{3}_{x}v+2\mu\partial_{x}v=Gh(x,t),&  t\in(0,T),\;\;x\in \mathbb{T}\\
		v(x,0)=v_{0}(x), & x\in \mathbb{T},
	\end{array}
	\right.
\end{equation}
where  $v_{0}\in H_{p}^{s}(\mathbb{T})$ with $s\geq0.$
As the operator $A_{\mu}:D(A_{\mu})\subseteq L^{2}(\mathbb{T})\rightarrow L^{2}(\mathbb{T}),$
defined by
\begin{equation}\label{ope2}
A_{\mu}\varphi=\alpha H\partial_{x}^{2}\varphi+\partial_{x}^{3}\varphi-2\mu\partial_{x}\varphi
\end{equation}
is skew-adjoint, it
generates a strongly continuous unitary group $\{U_{\mu}(t)\}_{t\in \mathbb{R}}$
on $L^{2}(\mathbb{T}).$ Moreover, for $s\in\mathbb{R}$ the family of operators $\{U_{\mu}(t)\}_{t\in\mathbb{R}}$ given by
\begin{equation}\label{semgru2}
	\begin{split}
		U_{\mu}:\mathbb{R}\rightarrow \mathcal{L}(H^{s}_{p}(\mathbb{T}))\qquad \qquad \qquad \qquad \qquad &\\
		t\rightarrow U_{\mu}(t)\varphi:=e^{(\alpha H\partial_{x}^{2}+\partial_{x}^{3}-2\mu\partial_{x})t}\varphi
		=\left(e^{i(-k^{3}-2\mu k+\alpha k|k|)t}\widehat{\varphi}(k)\right)^{\vee}, &
	\end{split}
\end{equation}
defines a strongly continuous one-parameter unitary  group of contractions on $H^{s}_{p}(\mathbb{T}).$ Furthermore, $U_{\mu}(t)$ is an isometry for all $t\in\mathbb{R}.$

\begin{rem}\label{sol3}
	  For $s\in \mathbb{R}$ and $v_{0}\in H_{p}^{s}(\mathbb{T}),$  (respectively $v_{0}\in H_{p}^{3}(\mathbb{T})$)
	  we obtain
	that there exists a unique solution $v\in C(\mathbb{R},H_{p}^{s}(\mathbb{T}))$
	(respectively, $v\in C(\mathbb{R},H_{p}^{3}(\mathbb{T}))\cap C^{1}(\mathbb{R},L^{2}(\mathbb{T}))$)
	for the homogeneous equation associated to equation \eqref{linearintro3}. Furthermore, if
	$0\leq T<\infty,$ $s\geq 0,$  $v_{0}\in H_{p}^{s}(\mathbb{T}),$ and $h\in L^{2}([0,T];H_{p}^{s}(\mathbb{T}))$
	then, there exists a unique mild solution
	$v\in C([0,T],H_{p}^{s}(\mathbb{T})) $ for the system \eqref{linearintro3}.
\end{rem}

\begin{rem}\label{sol4}
	We get an analogous result of Lemma \ref{coef} for the system \eqref{linearintro3}, just modifying  $\lambda_{k}=k^{3}-\alpha k|k|$\;\; by \;\;
	$\lambda_{k}=k^{3}+2\mu k-\alpha k|k|.$ Also, due to the ``asymptotic gap condition"
	that holds for the eigenvalues of the operator $A_{\mu},$ we have an analogous result of Theorem \ref{ControlLa} for the equation \eqref{linearintro3}, it means that the system \eqref{linearintro3} is exactly controllable.

	Thus, similarly to Corollary \ref{controloperator}, for $s\geq 0$ and any $T>0$ given, there exists  a
	bounded linear  operator
	$$\Phi_{\mu}:H_{p}^{s}(\mathbb{T})\times H_{p}^{s}(\mathbb{T})\rightarrow L^{2}([0,T];H_{p}^{s}(\mathbb{T}))$$ defined by
	$h_{\mu}=\Phi_{\mu}(v_{0},v_{1}),$ for all  $(v_{0},v_{1})\in H_{p}^{s}(\mathbb{T})\times H_{p}^{s}(\mathbb{T})$
	such that
	\begin{equation}\label{contLab}
		v_{1}=U_{\mu}(T)v_{0}+\int_{0}^{T}U_{\mu}(T-s)(G(\Phi_{\mu}(v_{0},v_{1})))(\cdot,s)\;ds,
	\end{equation}
 and
	\begin{equation}\label{oprestimaLab}
		\|\Phi_{\mu}(v_{0},v_{1})\|_{L^{2}([0,T];H_{p}^{s}(\mathbb{T})} \leq \nu\; (\|v_{0}\|_{H_{p}^{s}(\mathbb{T})}
		+\|v_{1}\|_{H_{p}^{s}(\mathbb{T})}),
	\end{equation}
	where $\nu$ depends only on $s,\;T,$ and $g.$ Therefore, the following observability inequality holds
{\small
	\begin{equation}\label{CEQ6}
		\int_{0}^{T}\|G^{\ast}U_{\mu}(\tau)^{\ast}(\phi(x))\|^{2}_{L^{2}_{p}(\mathbb{T})}\;d\tau
		\geq \delta^{2}\; \|\phi\|^{2}_{L^{2}(\mathbb{T})},\;\;\;\text{ for any}\;\;
		\phi\in L^{2}(\mathbb{T}),\;\;\text{ some}\;\;\delta>0,
\end{equation} }
and for any $T>0.$
\end{rem}

\section{Stabilization of the linear Benjamin equation}\label{section6}
In this section we prove the exponential stabilization results stated in Theorem \ref{st351} and Theorem \ref{estabilization}. From the observation made in the final part of section \ref{section5}, it is enough to  study the stabilization problem for the linear IVP \eqref{nonlinearintro3} in $H_{0}^{s}(\mathbb{T})$ with $s\geq0,$ where 
$$H_{0}^{s}(\mathbb{T}):=\left\{u\in H_{p}^{s}(\mathbb{T}):
[u(\cdot,t)]=0,\;\;\text{for all}\;\;t>0\right\}.$$
If $s=0$ then, we denote $H_{0}^{0}(\mathbb{T})$ by $L_{0}^{2}(\mathbb{T}).$ Here,
we mention some properties of these Sobolev spaces.
\begin{prop}\label{stabilspace}
	$H_{0}^{s}(\mathbb{T})$ is a closed subspace of $H_{p}^{s}(\mathbb{T})$ for all
	$s\geq 0.$ In particular, $L_{0}^{2}(\mathbb{T})$ is a closed subspace of
	$L^{2}(\mathbb{T}).$
\end{prop}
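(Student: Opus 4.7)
The plan is to realize $H_0^s(\mathbb{T})$ as the kernel of a bounded linear functional on $H_p^s(\mathbb{T})$, since kernels of bounded linear functionals on Banach spaces are automatically closed. The condition $[u(\cdot,t)]=0$ appearing in the definition is really a spatial condition on $u$, equivalent to $\widehat{u}(0)=0$, so I would first rewrite
$$H_0^s(\mathbb{T}) = \{u\in H_p^s(\mathbb{T}) : \widehat{u}(0)=0\} = \ker L,$$
where $L:H_p^s(\mathbb{T})\to\mathbb{C}$ is defined by $L(u):=\widehat{u}(0)=\frac{1}{2\pi}\int_0^{2\pi}u(x)\,dx$. Linearity of $L$ is obvious.

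The main step is to show $L$ is continuous. Since $s\geq 0$, every term $(1+|k|^2)^s|\widehat{u}(k)|^2$ is nonnegative and the $k=0$ summand equals $|\widehat{u}(0)|^2$, so by the definition of the Sobolev norm
$$2\pi\,|L(u)|^2 = 2\pi\,|\widehat{u}(0)|^2 \leq 2\pi\sum_{k\in\mathbb{Z}}(1+|k|^2)^s|\widehat{u}(k)|^2 = \|u\|_{H_p^s(\mathbb{T})}^2,$$
giving $|L(u)|\leq (2\pi)^{-1/2}\,\|u\|_{H_p^s(\mathbb{T})}$. Hence $L\in (H_p^s(\mathbb{T}))^{\ast}$ and $H_0^s(\mathbb{T})=\ker L$ is closed.

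For the special case $s=0$, the same estimate (or, alternatively, Cauchy--Schwarz applied to $L(u)=\frac{1}{2\pi}(u,1)_{L^2(\mathbb{T})}$) shows that $L$ is continuous on $L^2(\mathbb{T})$, so $L_0^2(\mathbb{T})$ is closed in $L^2(\mathbb{T})$. There is no genuine obstacle here: once the mean-value map is identified as a continuous linear functional on $H_p^s(\mathbb{T})$, the closedness follows immediately. The only point that merits a line of comment is the interpretation of the $t$-dependence in the definition, which reduces at each fixed time to the single scalar condition $\widehat{u}(0)=0$.
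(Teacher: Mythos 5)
Your proof is correct; the paper in fact states this proposition without any proof, and your argument (identifying $H_{0}^{s}(\mathbb{T})$ as the kernel of the mean-value functional $L(u)=\widehat{u}(0)$, which is bounded since the $k=0$ term of the Sobolev norm dominates $2\pi|\widehat{u}(0)|^{2}$) is exactly the standard one the authors evidently had in mind. Note only that your estimate does not actually use $s\geq 0$, so the closedness holds for every $s\in\mathbb{R}$.
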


\begin{rem}\label{st17}
	The Proposition \ref{stabilspace} implies that
	$(H_{0}^{s}(\mathbb{T}), \|\cdot\|_{H_{p}^{s}(\mathbb{T})})$ is a Hilbert space for all $s\geq 0$.
	Furthermore, it is easy to show that if $s\geq r \geq 0$ then
	$H_{0}^{s}(\mathbb{T}) \hookrightarrow H_{0}^{r}(\mathbb{T}),$ where the  embedding is dense.
\end{rem}

So, we study the stabilization problem for the system
\begin{equation}\label{atabilizationL2}
\left \{
\begin{array}{l l}
\partial_{t}u-\alpha \mathcal{H}\partial^{2}_{x}u-\partial^{3}_{x}u+2\mu\partial_{x} u=Ku,&  t>0,\;\;x\in \mathbb{T}\\
u(x,0)=u_{0}(x), & x\in \mathbb{T},
\end{array}
\right.
\end{equation}
where $u=u(x,t)$ is real valued function, $\alpha>0,$ and $K$ is a bounded linear operator on
$H^{s}_{0}(\mathbb{T})$. In view of the discussion at the end of the previous section we assume that  $\mu \in \mathbb{R},$ and
$[u(\cdot,t)]=0,$ for all $t\geq0.$ 

\subsection{Stabilization of the linear Benjamin equation}
In this subsection we prove that 
there exists a feedback control law such that
the system \eqref{atabilizationL2} is exponentially asymptotically stable when $t$ goes to infinity.
First, we prove that  the system \eqref{atabilizationL2} is globally well-posed in $H_{0}^{s}(\mathbb{T})$, $s\geq 0$.
\begin{thm}\label{solsta1}
	Let $u_{0}\in H_{0}^{3}(\mathbb{T}),$ then the IVP \eqref{atabilizationL2}
	has a unique solution
	$$u\in C([0,\infty);H_{0}^{3}(\mathbb{T}))
	\cap C^{1}([0,\infty);L^{2}_{0}(\mathbb{T})).$$
	
	Moreover, if $u_{0}\in H_{0}^{s}(\mathbb{T}),$ then we have that $u\in C([0,\infty);H_{0}^{s}(\mathbb{T})),$ for all $s \geq 0.$
\end{thm}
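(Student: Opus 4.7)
The plan is to recast \eqref{atabilizationL2} as the abstract Cauchy problem
\[
\partial_t u = (A_\mu + K)u, \qquad u(0) = u_0,
\]
and treat $K$ as a bounded perturbation of the skew-adjoint generator $A_\mu = \alpha\mathcal{H}\partial_x^2 + \partial_x^3 - 2\mu\partial_x$ introduced in \eqref{ope2}. By Remark \ref{sol3}, $A_\mu$ already generates a strongly continuous unitary group $\{U_\mu(t)\}_{t\in\mathbb{R}}$ on $H_p^s(\mathbb{T})$ for every $s\in\mathbb{R}$, so essentially all the work will be done by standard semigroup theory once the mean-zero condition is accounted for.

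First I would verify that the group leaves $H_0^s(\mathbb{T})$ invariant. From the explicit Fourier formula \eqref{semgru2} and the fact that $\lambda_0 = 0^3 + 2\mu\cdot 0 - \alpha\cdot 0 = 0$, the zero Fourier mode of $U_\mu(t)\varphi$ coincides with that of $\varphi$ for all $t\in\mathbb{R}$. Combined with Proposition \ref{stabilspace}, this gives that the restriction $\{U_\mu(t)|_{H_0^s}\}_{t\in\mathbb{R}}$ is a strongly continuous unitary group on the Hilbert space $H_0^s(\mathbb{T})$; its generator on $L_0^2(\mathbb{T})$ is $A_\mu$ with domain $H_0^3(\mathbb{T})$.

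Next, since by hypothesis $K\in\mathcal{L}(H_0^s(\mathbb{T}))$, the classical bounded perturbation theorem for $C_0$-semigroups (Theorem 3.1.1 in Pazy \cite{4}) implies that $A_\mu + K$, with the same domain $H_0^3(\mathbb{T})$, generates a $C_0$-semigroup $\{S(t)\}_{t\geq 0}$ on $L_0^2(\mathbb{T})$. For $u_0\in H_0^3(\mathbb{T})=D(A_\mu|_{L_0^2})$, the orbit $u(t):=S(t)u_0$ is therefore a classical solution: $u\in C([0,\infty);H_0^3(\mathbb{T}))\cap C^1([0,\infty);L_0^2(\mathbb{T}))$ and it satisfies \eqref{atabilizationL2} pointwise in $L_0^2(\mathbb{T})$. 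Uniqueness is built into the semigroup construction.

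For the second assertion with general $s\geq 0$ and $u_0\in H_0^s(\mathbb{T})$, I would apply the same bounded perturbation theorem directly in $H_0^s(\mathbb{T})$, which is legitimate because the first step already established that $A_\mu$ generates a $C_0$-group there and $K$ is bounded on it by assumption. Equivalently, one can solve the Duhamel equation
\[
u(t) = U_\mu(t)u_0 + \int_0^t U_\mu(t-\tau)\,Ku(\tau)\,d\tau
\]
by a Banach fixed-point argument in $C([0,T];H_0^s(\mathbb{T}))$ for small $T$ (using that $U_\mu$ is an isometry on $H_0^s$ and $K\in\mathcal{L}(H_0^s)$), and extend globally via the Gronwall-type bound $\|u(t)\|_{H_0^s}\leq e^{\|K\|t}\|u_0\|_{H_0^s}$ implicit in the linearity. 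The only mildly delicate point is the preservation of mean zero at the level of the $s$-scale, but this is granted by the Fourier argument in the first step, so I expect no real obstacle in this proof.
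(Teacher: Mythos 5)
Your proposal is correct and follows essentially the same route as the paper: the authors likewise invoke the bounded perturbation theorem for $C_0$-semigroups from Pazy \cite{4} applied to $A_\mu+K$ on $H_0^s(\mathbb{T})$, obtaining classical regularity for $u_0\in H_0^3(\mathbb{T})=D(A_\mu)$ and mild solutions for general $s\geq 0$. Your explicit Fourier-mode check that $U_\mu(t)$ preserves the mean-zero subspace is a detail the paper leaves implicit, but it does not change the argument.
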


\begin{proof}
	We know that the operator
	$A_{\mu}=\alpha \mathcal{H}\partial_{x}^{2}+\partial_{x}^{3}-2\mu\partial_{x}$
	is an infinitesimal generator of a
	$C_{0}$-semigroup $\{U_{\mu}(t)\}_{t\geq 0}$ over $H_{0}^{s}(\mathbb{T}).$ Also we know that $K$ is a bounded linear operator on $H^{s}_{0}(\mathbb{T}).$
	From the semigroup theory
	(see pg. 76 in  \cite{4}),
	we get that the operator
	$A_{\mu}+K,$ which is a perturbation of $A_{\mu}$ by a bounded linear operator, is an infinitesimal generator of a $C_{0}-$semigroup $\{T(t)\}_{t\geq 0}$ on $H^{s}_{0}(\mathbb{T}).$ It is important to observe that
	$A^{\ast}_{\mu}=-A_{\mu}$  	is the infinitesimal generator of a
	$C_{0}$-semigroup $\{U_{\mu}(t)^{\ast}\}_{t\geq 0},$  with domain of $A^{\ast}_{\mu}$ dense in $L_{0}^{2}(\mathbb{T}),$ and
	$U_{\mu}(t)^{\ast}=U_{\mu}(-t).$
\end{proof}

In order to stabilize the  equation \eqref{atabilizationL2} in $H^{s}_{0}(\mathbb{T})$, we
employ a simple feedback control law, $Ku=-GG^{\ast}u.$ The following theorem
says that the trivial solution, (u=0) of equation  \eqref{atabilizationL2} with this
feedback control law is exponentially asymptotically stable when $t$ goes to infinity.

\begin{thm}\label{st35}
	Let $\alpha>0,$ $\mu\in\mathbb{R},$ $g$ as in \eqref{gcondition},  and  $s\geq 0$ be given. There exist positive constans $M=M(\alpha, \mu, g, s)$ and $\gamma=\gamma(g),$ such that for any
	$u_{0}\in H_{0}^{s}(\mathbb{T}),$  the unique solution $u$
	of \eqref{atabilizationL2} with $K=-GG^{\ast}$ satisfies
	\begin{equation}\label{c5}
	\|u(\cdot,t)\|_{H_{0}^{s}(\mathbb{T})}\leq M
	e^{-\gamma t}\|u_{0}\|_{H_{0}^{s}(\mathbb{T})},\;\;\;\text{for all}\;\;
	t\geq 0.
	\end{equation}
\end{thm}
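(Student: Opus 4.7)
My plan is to first prove the decay in $L^2_0(\mathbb{T})$ (the case $s=0$) by combining a dissipation identity with the $L^2$-observability inequality \eqref{CEQ6}, and then to upgrade to $H^s_0(\mathbb{T})$ by a commutator-based bootstrap. For the $L^2$ step, pairing the closed-loop equation \eqref{atabilizationL2} with $u$ and using that $A_\mu$ is skew-adjoint on $L^2_0(\mathbb{T})$ while $GG^*$ is self-adjoint and non-negative yields the dissipation identity
$$\frac{1}{2}\frac{d}{dt}\|u(t)\|_{L^2}^2 = -\|G^* u(t)\|_{L^2}^2,$$
so the energy is non-increasing and integration on $[0,T]$ gives the balance $\|u(T)\|_{L^2}^2 = \|u_0\|_{L^2}^2 - 2\int_0^T \|G^* u(\tau)\|_{L^2}^2\,d\tau$.

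To turn this dissipation into a strict contraction, I would compare $u$ with the free evolution via Duhamel,
$$u(\tau) = U_\mu(\tau) u_0 - \int_0^\tau U_\mu(\tau-r)\, GG^*\, u(r)\,dr.$$
Applying $G^*$ on both sides, using the unitarity of $U_\mu$ and the boundedness of $G$ on $L^2$, and then squaring and integrating with Cauchy--Schwarz, one obtains
$$\int_0^T \|G^* U_\mu(\tau) u_0\|_{L^2}^2\,d\tau \leq C(T,g) \int_0^T \|G^* u(\tau)\|_{L^2}^2\,d\tau.$$
The observability \eqref{CEQ6}, combined with a change of variable exploiting that $U_\mu$ is a unitary group, bounds the left-hand side below by $\delta^2\|u_0\|_{L^2}^2$. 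Together with the energy balance above this gives $\|u(T)\|_{L^2}^2 \leq \rho\|u_0\|_{L^2}^2$ for some $\rho \in (0,1)$. Iterating on intervals of length $T$ via the semigroup property, and using that the energy is non-increasing between two such points, yields the exponential $L^2$-decay $\|u(t)\|_{L^2} \leq M_0 e^{-\gamma t}\|u_0\|_{L^2}$ with $\gamma$ depending only on $g$ (through the observability constant $\delta$).

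For $s>0$ the direct energy method does not close cleanly, because the feedback $-GG^*$ is symmetric in $L^2$ but not in $H^s$. I would bootstrap from the $L^2$ decay. For integer $s\geq 1$, set $w := \partial_x^s u$; since $\partial_x$ commutes with $A_\mu$, the function $w$ solves the closed-loop equation with source $[\partial_x^s, GG^*]u$, a commutator which, because $g$ is smooth, is bounded from $H^{s-1}$ into $L^2$. Applying Duhamel with the closed-loop semigroup $S_\mu(t)$, the $L^2$ decay of $S_\mu$ proved in the first stage, and the inductive hypothesis that $u$ decays in $H^{s-1}$, one controls $\|w(t)\|_{L^2}$, hence $\|u(t)\|_{H^s}$, at any rate strictly less than the $H^{s-1}$ rate. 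Fractional $s$ follows by interpolation between two consecutive integer orders. The main obstacle is precisely in this bootstrap: each step of the induction produces a convolution $\int_0^t e^{-\gamma(t-r)} e^{-\gamma r}\,dr \sim t e^{-\gamma t}$, which forces a small reduction of the exponential rate. However, since the theorem only requires some positive $\gamma=\gamma(g)$ (not an optimal value), one fixes at the outset a rate slightly smaller than the $L^2$ rate, common to all $s$, and absorbs every polynomial factor $t^k e^{-\gamma t}$ into a constant $M=M(\alpha,\mu,g,s)$, thereby closing the estimate.
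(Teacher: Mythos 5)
Your proposal is correct, but it departs from the paper's proof at both stages. For $s=0$ the paper follows the Russell--Zhang/Linares--Ortega scheme: it uses the exact controllability result to build an auxiliary trajectory $w$ of the controlled system \eqref{stabilizationL22} driven from $0$ to $u(\cdot,T)$ by a control $h$ with $\|h\|\leq\nu\|u(T)\|$, and then pairs $w$ against $u$ to get $\|u(T)\|^{2}\leq c_{g,T}\|u(T)\|\,\|Gu\|_{L^{2}(0,T;L^{2})}$, which combined with the dissipation identity yields the contraction factor $\rho<1$. You instead invoke the observability inequality \eqref{CEQ6} together with a Duhamel comparison of $u$ with the free flow $U_{\mu}(\tau)u_{0}$; after the change of variables $\phi=U_{\mu}(T)u_{0}$ this gives the same lower bound $\int_{0}^{T}\|G^{\ast}u\|^{2}\,d\tau\geq (\delta^{2}/C)\|u_{0}\|^{2}$ and hence the same contraction. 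Since the paper derives \eqref{CEQ6} from controllability anyway, the two routes are morally equivalent, but yours is mechanically different and arguably more streamlined. For $s>0$ the divergence is more substantial: the paper differentiates the closed-loop equation \emph{in time}, so that $\partial_{t}u$ solves the identical closed-loop system (no source term) and inherits the $L^{2}$ decay at the \emph{same} rate $\gamma$; it then recovers $\partial_{x}^{3}u$ from the equation via Gagliardo--Nirenberg absorption, proceeds in increments of $3$ in $s$, and interpolates. You differentiate \emph{in space}, pay a commutator $[\partial_{x}^{s},GG^{\ast}]$ (which is indeed bounded $H^{s-1}\to L^{2}$ because $GG^{\ast}$ is multiplication by $g^{2}$ plus finite-rank smoothing terms), and run Duhamel against the decaying closed-loop semigroup, advancing one derivative at a time. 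The paper's trick buys an unchanged decay rate at every regularity level for free; your bootstrap is more elementary and increments $s$ by $1$ rather than $3$, at the cost of the convolution bookkeeping — which, as you correctly observe, is harmless once a rate strictly below the $L^{2}$ rate is fixed once and for all, since the theorem only asks for some $\gamma=\gamma(g)>0$. Both arguments close.
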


\begin{proof} 
	We prove this theorem in five steps. 
	
	\noindent
{\bf Step 1.} 
	First we prove the case $s=0.$ In this case we use a procedure similar to \cite{1, Russell and Zhang}. Let $T>0$ be given and assume  $u_{0}\in H_{0}^{3}(\mathbb{T}).$
 Theorem \ref{solsta1} implies that the solution $u$ of the IVP
	\begin{equation}\label{stabilizationL21}
	\left \{
	\begin{array}{l l}
	\partial_{t}u-\alpha \mathcal{H}\partial^{2}_{x}u-\partial^{3}_{x}u+2\mu\partial_{x} u=-GG^{\ast}u,&  t>0,\;\;x\in \mathbb{T}\\
	u(x,0)=u_{0}(x), & x\in \mathbb{T},
	\end{array}
	\right.
	\end{equation}
	satisfies $u\in C([0,\infty);H_{0}^{3}(\mathbb{T}))
	\cap C^{1}([0,\infty);L^{2}_{0}(\mathbb{T})).$ It means $u(\cdot,t)\in H_{0}^{3}(\mathbb{T}),$ for all $t\geq 0$ and in particular, for $t=T.$ Now we consider the IVP
	\begin{equation}\label{stabilizationL22}
	\left \{
	\begin{array}{l l}
	\partial_{t}w-\alpha \mathcal{H}\partial^{2}_{x}w-\partial^{3}_{x}w+2\mu\partial_{x} w=Gh,&  t\in (0,T),\;\;x\in \mathbb{T}\\
	w(x,0)=0, & x\in \mathbb{T}.
	\end{array}
	\right.
	\end{equation}
	
  Remark \ref{sol4} implies that there exists a unique $h\in
	L^{2}([0,T]; H_{0}^{3}(\mathbb{T}))$ such that the unique solution
	$w\in C([0,\infty);H_{0}^{3}(\mathbb{T}))
	\cap C^{1}([0,\infty);L^{2}_{0}(\mathbb{T}))$
	of equation  \eqref{stabilizationL22} satisfies
	$ w(x,T)=u(x,T)$ for all $x\in \mathbb{T},$ and there exists a positive constant $\nu=\nu(g)$ such that
	\begin{equation}\label{sta1}
	\|h\|_{L^{2}([0,T];H_{0}^{3}(\mathbb{T}))} \leq
	\nu\; \|u(x,T)\|_{H_{0}^{3}(\mathbb{T})}.
	\end{equation}

	On the other hand, note that  $u_{0}\in H_{0}^{3}(\mathbb{T})\subset L_{0}^{2}(\mathbb{T}),$
	therefore Theorem \ref{solsta1} implies that $ u \in
	C([0,\infty);L_{0}^{2}(\mathbb{T}))$ is a solution of equation \eqref{stabilizationL21}.
	Furthermore, Remark \ref{sol4} implies that 
	$ h\in
	L^{2}([0,T]; L_{0}^{2}(\mathbb{T}))$ and the solution
	$ w\in C([0,\infty);L_{0}^{2}(\mathbb{T}))$
	of equation  \eqref{stabilizationL22} satisfies
	$ w(x,T)= u(x,T),\; \text{for all}\; x\in \mathbb{T},$ with
	\begin{equation}\label{sta12}
	\begin{split}
	\| h\|_{L^{2}([0,T];L_{0}^{2}(\mathbb{T}))}& \leq
	\nu\; \| u(x,T)\|_{L_{0}^{2}(\mathbb{T})}.
	\end{split}
	\end{equation}

	Now, multiplying the first equation in \eqref{stabilizationL21} by $\bar u$ and integrating with respect to $x,$ it follows that
	\begin{equation}\label{insta1}
	\int_{\mathbb{T}}\partial_{t}u \; \bar u\;dx-\int_{\mathbb{T}} \alpha
	\mathcal{H}\partial_{x}^{2}u\; \bar u dx- \int_{\mathbb{T}} \partial_{x}^{3}
	u\; \bar u\;dx + \int_{\mathbb{T}}2\mu\;\partial_{x}u\;\bar u\;dx=
	\int_{\mathbb{T}}-GG^{\ast} u\;\bar u dx.
	\end{equation}
	
	Integrating by parts,  using the Parseval's identity  and the fact that the operator $G$ is self-adjoint on $L^{2}_{0}(\mathbb{T}),$ it is easy to obtain from \eqref{insta1} that
	\begin{equation}\label{insta7}
	\dfrac{1}{2}\dfrac{d}{dt}\left(\|u(\cdot,t)\|^{2}_{L^{2}_{0}(\mathbb{T})}\right)=
	-\|Gu(\cdot,t)\|^{2}_{L^{2}_{0}(\mathbb{T})},\;\;\;\text{for all}\;t>0.
	\end{equation}

	Now integrating \eqref{insta7} with respect to the variable $t$ from $0$ and $T,$ we get
	\begin{equation}\label{insta8}
	\dfrac{1}{2}\|u(T)\|^{2}_{L^{2}_{0}(\mathbb{T})}
	-\dfrac{1}{2}\|u_{0}\|^{2}_{L^{2}_{0}(\mathbb{T})}=
	-\|Gu\|^{2}_{L^{2}((0,T);L^{2}_{0}(\mathbb{T}))}.
	\end{equation}
	
	On the other hand, multiplying \eqref{stabilizationL22} by $\bar u$
	and integrating with respect to the $x-$variable,  we get
	\begin{equation}\label{instalne1}
	\int_{\mathbb{T}}\partial_{t}w \; \bar u\;dx-\int_{\mathbb{T}}\left( \alpha
	\mathcal{H}\partial_{x}^{2}w\; \bar u +  \partial_{x}^{3}
	w\; \bar u\; - 2\mu\;\partial_{x}w\;\bar u\right)\;dx=
	\int_{\mathbb{T}}Gh\;\bar u dx,\;\;\text{for all}\;t>0.
	\end{equation}
	
	Using integration by parts in the second term of \eqref{instalne1}
	we get
	\begin{equation}\label{instalne2}
	\int_{\mathbb{T}}\partial_{t}w \; \bar u\;dx-\int_{\mathbb{T}}w\; \overline{\left( -\alpha
		\mathcal{H}\partial_{x}^{2}u -  \partial_{x}^{3}
		u\; + 2\mu\;\partial_{x}u\right)}\;dx=
	\int_{\mathbb{T}}Gh\;\bar u dx,\;\;\text{for all}\;t>0.
	\end{equation}
	
	Integrating \eqref{instalne2} with respect to $t$ from $0$ and $T,$
	and using integration by parts,
	we obtain
$$\int_{\mathbb{T}}w(x,T)\;  \bar u(x,T)\;dx-\int_{0}^{T}\int_{\mathbb{T}}w \; \overline{\left( \partial_{t}u-\alpha
\mathcal{H}\partial_{x}^{2}u -  \partial_{x}^{3}
u + 2\mu\partial_{x}u\right)}\;dx\;dt=
\int_{0}^{T}\int_{\mathbb{T}}Gh \;\bar u \;dx\;dt.$$

	Observe that $u$ is a solution of equation \eqref{stabilizationL21}. Thus
$$\int_{\mathbb{T}}u^{2}(T) \;dx+\int_{0}^{T}\int_{\mathbb{T}}w\; \overline{(GG^{\ast}u)}\;dx\;dt=
\int_{0}^{T}\int_{\mathbb{T}}Gh\;\bar u \;dx\;dt.$$
	
	Using that the solution $u$ is real, the	operator $G$ is self-adjoint on $L^{2}_{0}(\mathbb{T}),$ and the Cauchy-Shwartz inequality, we get
	\begin{equation}\label{instalne5}
	\begin{split}
	\|u(\cdot,T)\|^{2}_{L^{2}_{0}(\mathbb{T})}
	&\leq
	\|h-Gw\|_{L^{2}((0,T) ;L_{0}^{2}( \mathbb{T}))} \; \|Gu\|_{L^{2}((0,T)
		;L_{0}^{2}( \mathbb{T}))}.\\
	\end{split}
	\end{equation}
	
	From \eqref{sta12}, we have
	\begin{equation}\label{instalne6}
	\begin{split}
	\|h-Gw\|_{L^{2}((0,T) ;L_{0}^{2}( \mathbb{T}))}
	&\leq
	\nu \|u(T)\|_{L_{0}^{2}( \mathbb{T})}+c \;
	\left(\int_{0}^{T}
	\|w(\cdot,t)\|^{2}_{L_{0}^{2}( \mathbb{T})}\;dt\right)^{\frac{1}{2}}.
	\end{split}
	\end{equation}
	
		Also, observe that
	\begin{equation}\label{instalne7}
	\begin{split}
	\|w(\cdot,t)\|^{2}_{L_{0}^{2}( \mathbb{T})} &\leq
	\left\|\int_{0}^{t}U_{\mu}(t-t')Gh(\cdot,t')\;dt'\right\|^{2}_{L_{0}^{2}( \mathbb{T})}\\
	&\leq c^{2}\; T \left[\left( \int_{0}^{T}
	\left\|h(\cdot,t')\right\|^{2}_{L_{0}^{2}( \mathbb{T})}\;dt'
	\right)^{\frac{1}{2}}\right]^{2}\\
	&\leq c^{2}\; T \|h\|_{L^{2}((0,T);L_{0}^{2}(\mathbb{T}))}^{2}\\
	&\leq c^{2}\; T\; \nu^{2} \|u(T)\|^{2}_{L^{2}_{0}(\mathbb{T})}.
	\end{split}
	\end{equation}		
	
	It follows from \eqref{instalne6} and \eqref{instalne7} that
	\begin{equation}\label{instalne8}
	\begin{split}
	\|h-Gw\|_{L^{2}((0,T) ;L_{0}^{2}( \mathbb{T}))}
	&\leq c_{g,T}\; \|u(T)\|_{L_{0}^{2}( \mathbb{T})},
	\end{split}
	\end{equation}
	where $\displaystyle{c_{g,T}=max\{\nu,\;c^{2}\; T\; \nu\}}.$
	
	Thus, from \eqref{instalne5} and \eqref{instalne8}, we have
	\begin{equation}\label{instalne9}
	\|u(\cdot,T)\|^{2}_{L^{2}_{0}(\mathbb{T})}
	\leq c_{g,T}\; \|u(T)\|_{L_{0}^{2}( \mathbb{T})} \cdot \|Gu\|_{L^{2}((0,T)
		;L_{0}^{2}( \mathbb{T}))},
	\end{equation}
	which  implies that
	\begin{equation}\label{instalne10}
	-\|Gu\|^{2}_{L^{2}((0,T);L_{0}^{2}( \mathbb{T}))}
	\leq -\frac{1}{c^{2}_{g,T}} \;
	\|u(\cdot,T)\|^{2}_{L^{2}_{0}(\mathbb{T})}.
	\end{equation}
	
	From identity \eqref{insta8} and the inequality \eqref{instalne10}, we obtain
	\begin{equation}
	\left(1+\frac{2}{c^{2}_{g,T}}\right)\|u(T)\|^{2}_{L^{2}_{0}(\mathbb{T})}\leq
	\|u_{0}\|^{2}_{L^{2}_{0}(\mathbb{T})}.
	\end{equation}
	
	Thus, there exists $\rho_{g,T}=\rho \in (0,1)$ such that
$$\|u(T)\|^{2}_{L^{2}_{0}(\mathbb{T})}\leq \rho \;
\|u_{0}\|^{2}_{L^{2}_{0}(\mathbb{T})}, \;\;\;\text{for any} \;\;T>0.$$
	
	Moreover, we can repeat this estimate on successive intervals $[(n-1)T,nT],$ to get 
	\begin{equation}\label{instalne13}
	\begin{split}
	\|u(x,nT)\|^{2}_{L^{2}_{0}(\mathbb{T})}
	&\leq \rho^{n} \;
	\|u_{0}\|^{2}_{L^{2}_{0}(\mathbb{T})}, \;\;\;\text{for any } \;\;T>0,\;n\geq 1,
	\end{split}
	\end{equation}
	where $u$ is the solution of \eqref{stabilizationL21}, and  $\rho=\rho_{g,T}\in (0,1).$
	
	In particular, fixing \; $T>0$ we obtain that  for any $t\geq 0,$ there exists $n\in \mathbb{N}$ such that $nT\leq t\leq (n+1)T.$ From \eqref{insta7} we know that
	the function $t \rightarrow \|u(\cdot,t)\|^{2}_{L^{2}_{0}(\mathbb{T})}, $ with $t\geq 0$ is decreasing. From \eqref{instalne13} there exists $\rho=\rho_{g}\in (0,1)$ such that
	\begin{align*}
	\|u(x,t)\|^{2}_{L^{2}_{0}(\mathbb{T})}&\leq
	\|u(x,nT)\|^{2}_{L^{2}_{0}(\mathbb{T})}\\
	&\leq \rho^{n} \;
	\|u_{0}\|^{2}_{L^{2}_{0}(\mathbb{T})}, \;\;\;\text{for all} \;\;n\geq 1.
	\end{align*}
	
	It is easy to show that if 
$$0<\gamma\leq-\frac{ln(\rho)}{2T},\;\;\;\text{and}\;\;\;M\geq e^{\gamma \;T},$$
	 one has
$$\rho^{n}\leq M^{2}\;e^{-2\gamma \;t},
	\;\;\;\text{for all}\;\;n\in \mathbb{N}.$$

	Therefore,
	\begin{equation}\label{instalne15}
	\|u(x,t)\|_{L^{2}_{0}(\mathbb{T})}
	\leq M\; e^{-\gamma \;t}
	\|u_{0}\|_{L^{2}_{0}(\mathbb{T})}, \;\;\;\text{for all} \;t\geq 0,
	\end{equation}
	and we get the result for smooth initial data in $H_{0}^{3}(\mathbb{T})$.
	We complete the proof for $s=0$ using density arguments.\\

   \noindent
{\bf Step 2.}  Here we consider $s=3.$ In this case we use a similar argument as in Proposition 2.3 of  \cite{14}.
	Let $u$ be the solution of equation \eqref{stabilizationL21} with initial data $u_{0}\in H_{0}^{3}(\mathbb{T})$, then
$$u\in C([0,\infty);H_{0}^{3}(\mathbb{T}))
	\cap C^{1}([0,\infty);L^{2}_{0}(\mathbb{T})).$$
	
	Since
	$H_{0}^{3}(\mathbb{T})\subset L_{0}^{2}(\mathbb{T}),$ then from the  $s=0$ case we have that there exist positive constants
	$M_{1}$ and $ \gamma=\gamma(g)$ independent of $u_{0},$ such that
	\begin{equation}\label{expstabilizationinL21}
	\|u(\cdot,t)\|_{L_{0}^{2}(\mathbb{T})}\leq M_{1}
	e^{-\gamma t}\|u_{0}\|_{L_{0}^{2}(\mathbb{T})},\;\;\;\text{for all}\;\;
	t\geq 0.
	\end{equation}

	On the other hand, differentiating the equation \eqref{stabilizationL21} with respect
	to $t,$ we obtain
$$\partial_{t}(\partial_{t}u)-\alpha \mathcal{H}\partial^{2}_{x}(\partial_{t}u)
-\partial^{3}_{x}(\partial_{t}u)+2\mu\partial_{x}(\partial_{t} u)
=-GG^{\ast}(\partial_{t}u).$$
	
	Therefore, $w:=\partial_{t}u \in C([0,+\infty); L_{0}^{2}(\mathbb{T}))$ is the unique solution of	
	\begin{equation}\label{estabilizacion15}
	\partial_{t}w-\alpha \mathcal{H}\partial^{2}_{x}w
	-\partial^{3}_{x}w+2\mu\partial_{x} w
	=-GG^{\ast}w,\;\;\;t>0,\;\;x\in \mathbb{T},
	\end{equation}	
	with initial data
	\begin{equation}\label{estabilizacion16}
	w(x,0)=w_{0}=\partial_{t}u(x,0)=\alpha \mathcal{H}\partial^{2}_{x}u_{0}
	+\partial^{3}_{x}u_{0}-2\mu\partial_{x} u_{0}
	-GG^{\ast}u_{0} \in L_{0}^{2}(\mathbb{T}),\;x\in \mathbb{T}.
	\end{equation}
	
	Again, from the case $s=0$ applied to equation \eqref{estabilizacion15},   there exist positive constants
	$M_{1}=M_{1}(g)$ and 
	$\gamma=\gamma(g),$ independent of $w_{0},$ such that
	\begin{equation}\label{expstabilizationinL2w}
	\|\partial_{t}u(\cdot,t)\|_{L_{0}^{2}(\mathbb{T})}=
	\|w(\cdot,t)\|_{L_{0}^{2}(\mathbb{T})}\leq M_{1}
	e^{-\gamma t}\|w_{0}\|_{L_{0}^{2}(\mathbb{T})},\;\;\;\text{for all}\;\;
	t\geq 0.
	\end{equation}
	
	Note that, for each $t\geq 0$ 
	\begin{equation}\label{estabilizacion8}
	\begin{split}
	\|u(\cdot,t)\|_{H_{0}^{3}(\mathbb{T})}
	&\leq c_{0}\;\left(\|u(\cdot,t)\|_{L^{2}_{0}(\mathbb{T})}+
	\|\partial_{x}^{3}u(\cdot,t)\|_{L^{2}_{0}(\mathbb{T})}\right).
	\end{split}
	\end{equation}
	
	To estimate the term $\|\partial_{x}^{3}u(\cdot,t)\|_{L^{2}_{0}(\mathbb{T})}$ observe
	that from equation \eqref{stabilizationL21}
$$\partial_{x}^{3}u(\cdot,t)= w -\alpha \mathcal{H}\partial^{2}_{x}u
+2\mu\partial_{x} u +GG^{\ast}u.$$
	
	Thus, for each $t\geq 0$
	\begin{equation}\label{estabilizacion0}
		{\footnotesize
		\begin{split}
		\|\partial_{x}^{3}u(\cdot,t)\|_{L^{2}_{0}(\mathbb{T})} &\leq
		\|w(\cdot,t)\|_{L^{2}_{0}(\mathbb{T})}+\alpha
		\|\mathcal{H}\partial^{2}_{x}u(\cdot,t) \|_{L^{2}_{0}(\mathbb{T})}+
		2|\mu|\|\partial_{x}u(\cdot,t)\|_{L^{2}_{0}(\mathbb{T})}+
		\|GG^{\ast}u(\cdot,t)\|_{L^{2}_{0}(\mathbb{T})}.
		\end{split}}
	\end{equation}
	
	Using Gagliardo-Niremberg inequality (see the Theorem 3.70  in \cite{Aubin}) and Cauchy-Schwartz inequality with $\epsilon,$ we have
	\begin{equation}\label{estabilizacion1}
	{\small
		\begin{split}
		2|\mu|\|\partial_{x}u(\cdot,t)\|_{L^{2}_{0}(\mathbb{T})}&\leq
		2|\mu|\sqrt{2\pi}\|\partial_{x}u(\cdot,t)\|_{L^{\infty}(\mathbb{T})} \\
		&\leq	2|\mu|\sqrt{2\pi}\;c_{1}
		\|\partial_{x}^{3}u(\cdot,t)\|^{\frac{1}{2}}_{L^{2}_{0}(\mathbb{T})}
		\|u(\cdot,t)\|^{\frac{1}{2}}_{L^{2}_{0}(\mathbb{T})}
		\\
		&=	c_{\mu}
		\epsilon \|u(\cdot,t)\|_{L^{2}_{0}(\mathbb{T})}
		+\frac{c_{\mu}}{4\epsilon}\|\partial_{x}^{3}u(\cdot,t)\|
		_{L^{2}_{0}(\mathbb{T})},			
		\end{split}}
	\end{equation}
	where $c_{\mu}=2|\mu|\sqrt{2\pi}\;c_{1}.$
	Also, using that $\mathcal{H}$ is an isometry in $L^{2}_{0}(\mathbb{T}),$ integration by parts and Cauchy-Schwartz inequality with $\epsilon$, we obtain
	\begin{equation}
		\begin{split}
		\|\mathcal{H}\partial^{2}_{x}u(\cdot,t)\|^{2}_{L^{2}_{0}(\mathbb{T})}&=
		\int_{\mathbb{T}}\partial_{x}^{2}u(x,t)\;
		\overline{\partial_{x}^{2}u(x,t)}\;dx\\		
		&\leq \|\partial_{x}u(\cdot,t)\|_{L^{2}_{0}(\mathbb{T})}	
		\|\partial_{x}^{3}u(\cdot,t)\|_{L^{2}_{0}(\mathbb{T})}	\\	
		&\leq \epsilon \|\partial_{x}u(\cdot,t)\|^{2}_{L^{2}_{0}(\mathbb{T})}+	
		\frac{1}{4\epsilon}\|\partial_{x}^{3}u(\cdot,t)\|^{2}_{L^{2}_{0}(\mathbb{T})}.	
		\end{split}
	\end{equation}
	
	Therefore,
	\begin{equation}\label{c4}
		\begin{split}
		\|\mathcal{H}\partial^{2}_{x}u(\cdot,t)\|_{L^{2}_{0}(\mathbb{T})}&\leq	
		c_{2}\;\Big(\epsilon^{\frac{1}{2}} \|\partial_{x}u(\cdot,t)\|_{L^{2}_{0}(\mathbb{T})}+	
		\frac{1}{2\epsilon^{\frac{1}{2}}}\|\partial_{x}^{3}u(\cdot,t)\|
		_{L^{2}_{0}(\mathbb{T})}\Big).	
		\end{split}
	\end{equation}
	
	Using inequality \eqref{estabilizacion1} we obtain from \eqref{c4}
		\begin{equation}\label{estabilizacion2}
		\begin{split}
		\|\mathcal{H}\partial^{2}_{x}u(\cdot,t)\|_{L^{2}_{0}(\mathbb{T})}&\leq	
		c_{3}\;\epsilon^{\frac{3}{2}}
		\|u(\cdot,t)\|_{L^{2}_{0}(\mathbb{T})}
		+\frac{c_{4}}{\epsilon^{\frac{1}{2}}}
		\|\partial_{x}^{3}u(\cdot,t)\|
		_{L^{2}_{0}(\mathbb{T})},			
		\end{split}
		\end{equation} 
	where $c_{3}=c_{2}\; \sqrt{2\pi}\;c_{1},$ and
	$c_{4}=\frac{c_{2}c_{1} \sqrt{2\pi}}{4}+\frac{c_{2}}{2}.$
	Thus, from inequalities \eqref{expstabilizationinL2w}, \eqref{estabilizacion0}, \eqref{estabilizacion1} and \eqref{estabilizacion2}, we obtain 
	\begin{equation}\label{estabilizacion4}
	{\footnotesize
	\begin{split}
	\left(1-\frac{c_{4}\;\alpha}{\epsilon^{\frac{1}{2}}}-
	\frac{c_{\mu}}{4\epsilon}\right) \|\partial_{x}^{3}u(\cdot,t)\|_{L^{2}_{0}(\mathbb{T})}
	&\leq M_{1} e^{-\gamma t}\|w_{0}\|_{L_{0}^{2}(\mathbb{T})}
	+\left(\alpha \;c_{3}\;\epsilon^{\frac{3}{2}}+c_{\mu}\epsilon+
	c_{g}^{2}\right)\|u(\cdot,t)\|_{L^{2}_{0}(\mathbb{T})}\\
	&\leq M_{1} e^{-\gamma t}\|w_{0}\|_{L_{0}^{2}(\mathbb{T})}
	+\left(\alpha \;c_{3}\;\epsilon^{\frac{3}{2}}+c_{\mu}\epsilon+
	c_{g}^{2}\right)M_{1} e^{-\gamma t}\|u_{0}\|_{L_{0}^{2}(\mathbb{T})}.
	\end{split}}
	\end{equation}
	
	Therefore, taking $\epsilon>0$ large enough such that $1-\frac{c_{4}\;\alpha}{\epsilon^{\frac{1}{2}}}-
	\frac{c_{\mu}}{4\epsilon}>0$ we infer that there exists a positive
	constant $c=c_{\alpha, \mu, g},$ independent of $u_{0},$ and $w_{0}$ such that	
	\begin{equation}\label{estabilizacion5}
	\begin{split}
	\|\partial_{x}^{3}u(\cdot,t)\|_{L^{2}_{0}(\mathbb{T})}
	&\leq\;c\; M_{1} e^{-\gamma t} \left(\|w_{0}\|_{L_{0}^{2}(\mathbb{T})}
	+\|u_{0}\|_{L_{0}^{2}(\mathbb{T})}\right).
	\end{split}
	\end{equation}
	
	Also, note that
	\begin{equation}\label{estabilizacion6}
	\begin{split}
	\|w_{0}\|_{L_{0}^{2}(\mathbb{T})}
	&\leq \alpha\; \|\partial_{x}^{2}u_{0}\|_{L_{0}^{2}(\mathbb{T})}
	+\|\partial_{x}^{3}u_{0}\|_{L_{0}^{2}(\mathbb{T})}
	+2|\mu|\;\|\partial_{x}u_{0}\|_{L_{0}^{2}(\mathbb{T})}
	+c_{g}^{2}\|u_{0}\|_{L_{0}^{2}(\mathbb{T})}\\
	&\leq c_{6}\;\|u_{0}\|_{L_{0}^{2}(\mathbb{T})},
	\end{split}
	\end{equation}
	where $c_{6}=c_{5}(\alpha+1+2|\mu|+c_{g}^{2}).$
	Thus from \eqref{estabilizacion5} and \eqref{estabilizacion6}, we have
	\begin{equation}\label{estabilizacion7}
	\begin{split}
	\|\partial_{x}^{3}u(\cdot,t)\|_{L^{2}_{0}(\mathbb{T})}
	&\leq M_{2} e^{-\gamma t}\|u_{0}\|_{L_{0}^{2}(\mathbb{T})},
	\end{split}
	\end{equation}
	where $M_{2}=c\; M_{1}(c_{6}+1).$
	
	Now, from \eqref{expstabilizationinL21},  \eqref{estabilizacion8} and \eqref{estabilizacion7}, we get
	\begin{equation}\label{estabilizacion9}
	\begin{split}
	\|u(\cdot,t)\|_{H_{0}^{3}(\mathbb{T})}
	&\leq c_{0}\;\left(M_{1}
	e^{-\gamma t}\|u_{0}\|_{L_{0}^{2}(\mathbb{T})}+
	M_{2} e^{-\gamma t}\|u_{0}\|_{L_{0}^{2}(\mathbb{T})}\right)\\
	&\leq c_{0}\;\left(M_{1} +
	M_{2}\right) e^{-\gamma t}\;c_{5}\;\|u_{0}\|_{H_{0}^{3}(\mathbb{T})}\\
	& \leq M\;e^{-\gamma t}\|u_{0}\|_{H_{0}^{3}(\mathbb{T})},\;\;\;
	\text{for all}\;\;t\geq 0,
	\end{split}
	\end{equation}
	where  $M=M(\alpha, \mu, g)=c_{0}\;(M_{1}+M_{2})\;c_{5},$
	and $\gamma=\gamma(g)$ are positive constants independent of $u_{0}.$
	
 \noindent
{\bf Step 3.}  Using induction and similar arguments as above, we prove that inequality 
	\eqref{c5} holds for $s=3n,$ with $n\in \mathbb{N}.$
	
\noindent
{\bf Step 4.} 	We consider $0<s<3.$ In this case we use  the Real Interpolation Method, especifically the K-method of Interpolation, (see Definition 2.4.3, and  Theorem 3.1.2 in  Bergh and  Lofstrom \cite{Bergh and Lofstrom}).
	From  Corollary 1.111  in Triebel
	\cite{Triebel}  we know that the space of interpolation between $L_{0}^{2}(\mathbb{T})$ and
	$H^{3}_{0}(\mathbb{T})$ is
$$(L_{0}^{2}(\mathbb{T}),H_{0}^{3}(\mathbb{T}))_{\theta,2}=
H_{0}^{3\theta}(\mathbb{T}),$$
	where $0<\theta<1.$
	Therefore, interpolating \eqref{instalne15} and 
	\eqref{estabilizacion9} we get that there exists $M=M(\alpha, \mu, g, \theta),$ and $\gamma=\gamma(g)$ such that
$$\| u(\cdot,t) \|_{H_{0}^{3\theta}(\mathbb{T})}
\leq   M e^{-\gamma\;t}
\| u_{0} \|_{H_{0}^{3\theta}(\mathbb{T})}, 
\;\;\text{for all}\;t\geq 0,$$
	where $0<\theta<1,$ and $u$ is the solution of \eqref{atabilizationL2} with $K=-GG^{\ast}.$ Thus, denoting $s=3\theta,$ we obtain the result.

\noindent
{\bf Step 5.} 	Finally, using an induction argument and computations  similar to those in the previous cases we can prove the following claim.
	
\noindent	
{\bf Claim:} For $0< \rho < 1,$ and $n\in \mathbb{N}\cup \{0\},$ there exist positive constants 
	$M=M(\alpha, \mu, g, n,\rho)$ and $\gamma=\gamma(g),$ such that for any
	$u_{0}\in H_{0}^{3n+3\rho}(\mathbb{T}),$  the unique solution $u$
	of \eqref{atabilizationL2} with $K=-GG^{\ast}$ satisfies
	$$\|u(\cdot,t)\|_{H_{0}^{3n +3\rho}(\mathbb{T})}\leq M
	e^{-\gamma t}\|u_{0}\|_{H_{0}^{3n +3\rho}(\mathbb{T})},\;\;\;\text{for all}\;\;
	t\geq 0.$$
	
	Note that, for $s\geq0$ given, there exist $n\in \mathbb{N}\cup \{0\}$ and $0\leq \rho \leq 1,$ such that $s=3n+3\rho.$
	Therefore,
 inequality \eqref{c5} for the other values of $s$ follows from the claim and the result obtained in the third step. This complete the proof the Theorem \ref{st35}.
\end{proof}

Observe that Theorem \ref{st351} is a direct consequence of Theorem \ref{st35}.

\subsection{Stabilization of the linear Benjamin equation with an arbitrary decay rate}

In this subsection, we show that it is possible to choose an appropriate linear feedback control law such that the decay rate of the resulting closed-loop system \eqref{atabilizationL2} is as large as one desires. 

Let $T>0$ be any fixed number.
For $\lambda >0$ and $s\geq 0$ given, we  define the operator
\begin{equation}\label{st1}
L_{\lambda}\phi=\int_{0}^{T}e^{-2\lambda \tau}\;U_{\mu}(-\tau)GG^{\ast}
U_{\mu}(-\tau)^{\ast}\phi\;d\tau,
\;\;\;\text{for all}\;\;\phi\in H_{p}^{s}(\mathbb{T}).
\end{equation}

With an analogous argument as in Lemma 2.4 of \cite{14} 
we can prove the following properties of this operator.

\begin{lem}\label{st9}
The operator	$L_{\lambda}:H_{p}^{s}(\mathbb{T})\longrightarrow
H_{p}^{s}(\mathbb{T})$ is linear and bounded.   Moreover,
	$L_{\lambda}$ is an isomorphism from $H_{0}^{s}(\mathbb{T})$
	onto $H_{0}^{s}(\mathbb{T})$, for all $s\geq 0.$
\end{lem}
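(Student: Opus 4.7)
The plan has three stages: check boundedness on $H^s_p(\mathbb{T})$ together with mean-preservation, prove the isomorphism property on $L^2_0(\mathbb{T})$ via the observability inequality, and finally bootstrap to $H^s_0(\mathbb{T})$ for $s>0$.

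\medskip

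\noindent\textbf{Boundedness and mean-preservation.} Linearity is immediate. For boundedness on $H^s_p(\mathbb{T})$, recall that $U_\mu(t)$ is a strongly continuous group of isometries on $H^s_p(\mathbb{T})$ (cf. \eqref{semgru2}) and that $G$ and its $L^2$-adjoint $G^*$ (which coincides with $G$) map $H^s_p(\mathbb{T})$ to itself boundedly (Proposition \ref{Ghop} specialised to time-independent functions). Hence the integrand $e^{-2\lambda\tau}U_\mu(-\tau)GG^*U_\mu(-\tau)^*\phi$ is bounded in $H^s_p(\mathbb{T})$ by $e^{-2\lambda\tau}\|GG^*\|_{\mathcal{L}(H^s_p)}\|\phi\|_{H^s_p}$, and integrating in $\tau$ gives $L_\lambda\in\mathcal{L}(H^s_p(\mathbb{T}))$. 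Moreover, $L_\lambda$ maps $H^s_0(\mathbb{T})$ into itself: from \eqref{semgru2} one has $\widehat{U_\mu(t)\phi}(0)=\widehat{\phi}(0)$, so $U_\mu(t)$ preserves mean value, while the range of $G$ lies in the mean-zero subspace by \eqref{EQ1}--\eqref{gcondition}. Composing these, the integrand defining $L_\lambda\phi$ stays in $H^s_0(\mathbb{T})$ for every $\tau\in[0,T]$, hence so does $L_\lambda\phi$.

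\medskip

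\noindent\textbf{Isomorphism on $L^2_0(\mathbb{T})$.} The operator is self-adjoint on $L^2_0(\mathbb{T})$, and a direct computation using $U_\mu(-\tau)^*=U_\mu(\tau)$ together with $G^*=G$ yields
\begin{equation*}
\langle L_\lambda\phi,\phi\rangle_{L^2}=\int_0^T e^{-2\lambda\tau}\|GU_\mu(\tau)\phi\|_{L^2}^2\,d\tau.
\end{equation*}
Applying the observability inequality \eqref{CEQ6} to $U_\mu(T)\phi$ (which has the same $L^2$-norm as $\phi$) and performing the change of variable $s=T-\tau$ gives $\int_0^T\|GU_\mu(\tau)\phi\|_{L^2}^2\,d\tau\geq \delta^2\|\phi\|_{L^2}^2$, and therefore
\begin{equation*}
\langle L_\lambda\phi,\phi\rangle_{L^2}\geq \delta^2 e^{-2\lambda T}\|\phi\|_{L^2}^2, \qquad \phi\in L^2_0(\mathbb{T}).
\end{equation*}
Thus the symmetric bilinear form $a(\phi,v)=\langle L_\lambda\phi,v\rangle_{L^2}$ is bounded and coercive on $L^2_0(\mathbb{T})$, and the Lax--Milgram lemma produces the invertibility of $L_\lambda:L^2_0(\mathbb{T})\to L^2_0(\mathbb{T})$.

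\medskip

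\noindent\textbf{Extension to $H^s_0(\mathbb{T})$.} Injectivity on $H^s_0(\mathbb{T})$ follows directly from injectivity on $L^2_0(\mathbb{T})$ via the dense embedding $H^s_0\hookrightarrow L^2_0$. The real issue is surjectivity: given $\psi\in H^s_0$, the $L^2$-isomorphism furnishes a unique $\phi\in L^2_0$ with $L_\lambda\phi=\psi$, and one has to show $\phi\in H^s_0$. Here lies the main obstacle, since $L_\lambda$ is not a smoothing operator -- $GG^*$ is essentially multiplication by a smooth function followed by projection. The plan is to exploit that $U_\mu$ commutes with every Fourier multiplier, in particular with the Bessel potential $\Lambda^s:=(I-\partial_x^2)^{s/2}$ defining the $H^s$-norm. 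Applying $\Lambda^s$ to $L_\lambda\phi=\psi$ one gets
\begin{equation*}
L_\lambda(\Lambda^s\phi)=\Lambda^s\psi+R_s\phi,
\end{equation*}
where $R_s=\int_0^T e^{-2\lambda\tau}U_\mu(-\tau)[\Lambda^s,GG^*]U_\mu(-\tau)^*\,d\tau$ is bounded on $L^2_0(\mathbb{T})$ thanks to the smoothness of $g$. Inverting $L_\lambda$ on $L^2_0(\mathbb{T})$ then yields $\|\Lambda^s\phi\|_{L^2}\leq C(\|\psi\|_{H^s}+\|\phi\|_{L^2})$, whence $\phi\in H^s_0$ after a bootstrap on $s$. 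An alternative -- and this is the route followed in Lemma 2.4 of \cite{14} -- is to derive an $H^s$-valued version of the observability inequality by applying \eqref{CEQ6} to $\Lambda^s\phi$ and to run Lax--Milgram directly in $H^s_0(\mathbb{T})$. Either way, the rigorous transfer of the $L^2$-isomorphism to $H^s_0$ is the technical crux of the argument.
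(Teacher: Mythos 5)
The paper offers no proof of Lemma \ref{st9} at all---it simply defers to the ``analogous argument'' of Lemma 2.4 in \cite{14}---and your proposal reconstructs exactly that argument: mean-preservation plus boundedness of the integrand, coercivity of the quadratic form $\langle L_{\lambda}\phi,\phi\rangle_{L^{2}}$ via the observability inequality \eqref{CEQ6} (with the correct handling of the time reversal through $U_{\mu}(T)\phi$ and the substitution $s=T-\tau$), Lax--Milgram on $L^{2}_{0}(\mathbb{T})$, and a commutator argument to transfer the isomorphism to $H^{s}_{0}(\mathbb{T})$. So this is essentially the same approach the paper relies on, and it is correct in outline. One precision: since $[\Lambda^{s},GG^{\ast}]$ is an operator of order $s-1$, your remainder $R_{s}$ is bounded from $H^{s-1}_{0}(\mathbb{T})$ to $L^{2}_{0}(\mathbb{T})$ rather than on $L^{2}_{0}(\mathbb{T})$ when $s>1$, so the bootstrap in $s$ that you mention in passing is not optional but the mechanism that makes the last step close.
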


\begin{rem}\label{st36}
	Lemma  \ref{st9} implies that there exists a positive constant $C=C(\delta,s,\lambda,T,g)$ such that
$$\|L_{\lambda}^{-1}\psi\|_{H_{0}^{s}(\mathbb{T})}
\leq\;C\;
\|\psi\|_{H_{0}^{s}(\mathbb{T})},\;\;
\text{for all}\;\;\psi\in H_{0}^{s}(\mathbb{T}).$$
\end{rem}

Choosing the feedback control law in system \eqref{atabilizationL2}  as
\begin{equation}\label{feedback}
Ku:= \left\{
\begin{array}{lcl}
-K_{\lambda}u=-G G^{\ast} L_{\lambda}^{-1}u, & \mbox{if} &  \lambda>0
\\
&           &          \\
-K_{0}u=-G G^{\ast}u,
& \mbox{if} &\lambda=0,
\end{array}
\right.
\end{equation}
we can rewrite  the resulting closed-loop system in the following form
\begin{equation}\label{st21}
\left \{
\begin{array}{l l}
\partial_{t}u-\alpha \mathcal{H}\partial^{2}_{x}u-\partial^{3}_{x}u+2\mu\partial_{x} u=-K_{\lambda}u,&  t>0,\;\;x\in \mathbb{T}\\
u(x,0)=u_{0}(x), & x\in \mathbb{T},
\end{array}
\right.
\end{equation}
where  $\lambda\geq 0$   and $K_{\lambda}$ is a bounded linear operator on
$H^{s}_{0}(\mathbb{T})$ with $s\geq 0.$ 
We have the following result.

\begin{thm}\label{st37}
	Let $\alpha>0,$ $\mu \in \mathbb{R},$ $s\geq 0$ and $\lambda>0$ be given.
	For any $u_{0}\in H_{0}^{s}(\mathbb{T}),$ the system \eqref{st21} admits a unique solution $u\in C([0,+\infty), H_{0}^{s}(\mathbb{T})).$ Moreover,
	there exists $M=M(g,\lambda,\delta, \alpha, \mu, s)>0$ such that
	$$\|u(\cdot,t)\|_{H_{0}^{s}(\mathbb{T})}\leq
	M\;e^{-\lambda\;t}\|u_{0}\|_{H_{0}^{s}(\mathbb{T})},
	\;\;\;\text{for all}\;\;t\geq0.$$
\end{thm}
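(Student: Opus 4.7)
The plan is to use a Lyapunov-functional argument built around the Gramian $L_\lambda$, in the spirit of the rapid stabilization method used in \cite{14}. \textbf{Well-posedness.} By Lemma \ref{st9} and Proposition \ref{Ghop}, $K_\lambda = GG^\ast L_\lambda^{-1}$ is a bounded linear operator on $H_0^s(\mathbb{T})$. Since $A_\mu$ generates the unitary group $\{U_\mu(t)\}$ on $H_0^s(\mathbb{T})$, the perturbed operator $A_\mu - K_\lambda$ generates a $C_0$-semigroup (cf.\ Theorem \ref{solsta1}), giving a unique solution $u \in C([0,+\infty); H_0^s(\mathbb{T}))$, smooth enough to justify the computations below when $u_0 \in H_0^3(\mathbb{T})$.

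\textbf{Key operator identity.} Using $U_\mu(-\tau)^\ast = U_\mu(\tau)$ and the fact that $A_\mu$ commutes with $U_\mu(t)$, differentiating $\tau \mapsto e^{-2\lambda\tau} U_\mu(-\tau) GG^\ast U_\mu(-\tau)^\ast$ and integrating over $[0,T]$ yields, on $D(A_\mu)$,
\begin{equation}\label{plan:identity}
A_\mu L_\lambda + L_\lambda A_\mu^\ast + 2\lambda L_\lambda = GG^\ast - e^{-2\lambda T}\, U_\mu(-T)\, GG^\ast\, U_\mu(-T)^\ast.
\end{equation}
Moreover $L_\lambda$ is self-adjoint on $L_0^2(\mathbb{T})$ and the observability inequality \eqref{CEQ6} gives $\langle L_\lambda\phi,\phi\rangle \geq e^{-2\lambda T}\delta^2\|\phi\|_{L_0^2}^2$, so the functional $V(t) := \langle L_\lambda^{-1} u(t), u(t)\rangle_{L_0^2}$ is equivalent to $\|u(t)\|_{L_0^2}^2$.

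\textbf{Decay in $L_0^2$.} For $u_0 \in H_0^3(\mathbb{T})$, set $w(t) = L_\lambda^{-1} u(t)$. Using \eqref{st21}, the self-adjointness of $L_\lambda$, and \eqref{plan:identity},
\begin{align*}
V'(t) &= 2\,\RE\langle w, A_\mu u - GG^\ast w\rangle_{L_0^2}
= \langle w, (A_\mu L_\lambda + L_\lambda A_\mu^\ast) w\rangle_{L_0^2} - 2\|G^\ast w\|_{L_0^2}^2 \\
&= -\|G^\ast w\|_{L_0^2}^2 - e^{-2\lambda T}\|G^\ast U_\mu(-T)^\ast w\|_{L_0^2}^2 - 2\lambda V(t) \leq -2\lambda V(t).
\end{align*}
Hence $V(t) \leq e^{-2\lambda t} V(0)$, and the equivalence of $V$ with $\|\cdot\|_{L_0^2}^2$ yields $\|u(\cdot,t)\|_{L_0^2} \leq M e^{-\lambda t}\|u_0\|_{L_0^2}$; density of $H_0^3$ in $L_0^2$ extends the bound to every $u_0 \in L_0^2(\mathbb{T})$.

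\textbf{Higher regularity and the main obstacle.} Since $K_\lambda$ is bounded and time-independent, $\partial_t u$ solves the same closed-loop equation \eqref{st21} with initial data $(A_\mu - K_\lambda)u_0$; applying the $L_0^2$ bound to $\partial_t u$ and then solving the equation for $\partial_x^3 u = \partial_t u + \alpha\mathcal{H}\partial_x^2 u - 2\mu\partial_x u + K_\lambda u$, combined with Gagliardo--Nirenberg estimates exactly as in Step 2 of the proof of Theorem \ref{st35}, produces the decay in $H_0^3(\mathbb{T})$. An induction argument covers $s = 3n$, and real interpolation with $(L_0^2, H_0^3)_{\theta,2} = H_0^{3\theta}$ fills in the intermediate spaces. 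The delicate point is the rigorous justification of \eqref{plan:identity} at the level of unbounded operators: since $A_\mu L_\lambda$ composes a bounded operator with an unbounded one, I would first establish \eqref{plan:identity} in the bilinear-form sense on $D(A_\mu)\times D(A_\mu)$, then apply it with $w(t) = L_\lambda^{-1}u(t)\in H_0^3 \subset D(A_\mu)$ for smooth data (using that $L_\lambda$ is an isomorphism of $H_0^3$ onto $H_0^3$ by Lemma \ref{st9}), and finally extend by density.
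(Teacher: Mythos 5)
Your proposal is correct, and its overall architecture coincides with the paper's: well-posedness by bounded perturbation of the generator $A_{\mu}$, then higher regularity by differentiating the closed-loop equation in time, solving for $\partial_{x}^{3}u$, and finishing with induction on $s=3n$ and real interpolation, exactly as in Step 2--5 of the proof of Theorem \ref{st35}. The one place where you genuinely diverge is the core $s=0$ decay estimate: the paper disposes of it in a single line by invoking Theorem 2.1 of Slemrod \cite{Slemrod}, whereas you open that black box and prove it directly with the Lyapunov functional $V(t)=\langle L_{\lambda}^{-1}u(t),u(t)\rangle_{L^{2}_{0}}$. Your operator identity \eqref{plan:identity} is correct (it follows from $\tfrac{d}{d\tau}\bigl[e^{-2\lambda\tau}U_{\mu}(-\tau)GG^{\ast}U_{\mu}(-\tau)^{\ast}\bigr]=-(2\lambda+A_{\mu})S(\tau)-S(\tau)A_{\mu}^{\ast}$ integrated over $[0,T]$), and the computation $V'\leq-2\lambda V$ checks out; together with the coercivity of $L_{\lambda}$ coming from the observability inequality (note that what is actually needed is observability for $\tau\mapsto G^{\ast}U_{\mu}(-\tau)^{\ast}\phi$, the time-reversed version of \eqref{CEQ6}, which holds by the same controllability argument and is already implicit in Lemma \ref{st9}), this gives the equivalence $V\sim\|u\|_{L^{2}_{0}}^{2}$ and hence the rate-$\lambda$ decay. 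What your route buys is self-containedness and an explicit constant $M$ in terms of $\delta$, $\lambda$, $T$ and $\|G\|$; what the paper's route buys is brevity. Your closing remark about justifying \eqref{plan:identity} only in the bilinear-form sense on $D(A_{\mu})\times D(A_{\mu})$ and using that $L_{\lambda}$ is an isomorphism of $H_{0}^{3}$ onto itself (Lemma \ref{st9}) so that $w=L_{\lambda}^{-1}u\in D(A_{\mu})$ for $u_{0}\in H_{0}^{3}$, followed by density, is exactly the right way to make the argument rigorous; no gap remains.
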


\begin{proof}
	As $K_{\lambda}$ is a bounded linear operator	the same argument used in Theorem \ref{solsta1} shows that
	for  $u_{0}\in H_{0}^{s}(\mathbb{T})$ the problem \eqref{st21}
	has a unique solution
	$u\in C([0,\infty);H_{0}^{s}(\mathbb{T}))$ for all $s \geq 0.$
	We denote by $\{T_{\lambda}(t)\}_{t\geq 0}$ the $C_{0}-$semigroup  on $H^{s}_{0}(\mathbb{T})$ with infinitesimal generator 	
	$A_{\mu}-K_{\lambda}.$

	The $s=0$ case follows from Theorem 2.1 in \cite{Slemrod}. The others cases of $s$	are proved as in Theorem \ref{st35}.
\end{proof}

Finally, observe that Theorem \ref{estabilization} is a direct consequence of Theorem \ref{st37}.

\section{Concluding Remarks}\label{conc-rem}

We proved that the linearized Benjamin equation with periodic boundary conditions is exactly controllable and  exponentially stabilizable with any given decay rate in $H_{p}^{s}(\mathbb{T})$ with $s\geq0$. These results are in accordance with the controllability and stabilization results for the linearized BO and  the KdV equations respectively obtained in \cite{1} and \cite{10}.  The Benjamin equation has a combination of the KdV term $\partial_{x}^{3}u$ and the BO term $\alpha \mathcal{H} \partial_{x}^{2}$ in its linear part. Recently, using propagation of compactness, unique continuation property  and  propagation of smoothness, Laurent, Rosier and Zhang \cite{14} proved that the nonlinear  KdV equation is globally exactly controllable and globally exponentially stabilizable. Very recently, similar results  for the nonlinear BO equation are proved by  Laurent, Linares and Rosier \cite{Laurent Linares and Rosier}. Therefore,
it is natural to ask if these controllability and stabilizability results are valid for the   nonlinear Benjamin equation as well. Taking idea from \cite{Laurent}, \cite{14} and \cite{Laurent Linares and Rosier}, we plan to  derive  propagation of compactness, unique continuation property and  propagation of smoothness  results for the solutions of the Benjamin equation in some adequate Bourgain's spaces in order to provide an affirmative answer to the question posed above. This work is in progress.


\subsection*{Acknowledgements}
F. Vielma is supported by FAPESP, Brazil (grant no. 2015/06131-5). The authors would like to thank Prof. Felipe linares, Prof. Ademir Pastor, and Prof. Lionel Rosier  for many helpful discussions and suggestions.


\end{document}